\definecolor{green}{rgb}{0,0.8,0} 
\newtheorem{theorem}{Theorem}[section]
\newtheorem{lemma}[theorem]{Lemma}
\newtheorem{proposition}[theorem]{Proposition}
\theoremstyle{definition}
\newtheorem{definition}[theorem]{Definition}
\theoremstyle{remark}
\newtheorem{remark}[theorem]{Remark}
\numberwithin{equation}{section}
\newcommand{\relphantom}[1]{\mathrel{\phantom{#1}}}
\newcommand{\nrm}{\@ifstar{\nrmb}{\nrmi}}
\newcommand{\nrmi}[1]{\Vert{#1}\Vert}
\newcommand{\nrmb}[1]{\left\Vert{#1}\right\Vert}
\newcommand{\abs}{\@ifstar{\absb}{\absi}}
\newcommand{\absi}[1]{\vert{#1}\vert}
\newcommand{\absb}[1]{\left\vert{#1}\right\vert}
\newcommand{\brk}{\@ifstar{\brkb}{\brki}}
\newcommand{\brki}[1]{\langle{#1}\rangle}
\newcommand{\brkb}[1]{\left\langle{#1}\right\rangle}
\newcommand{\set}{\@ifstar{\setb}{\seti}}
\newcommand{\seti}[1]{\{#1\}}
\newcommand{\setb}[1]{\left\{ #1\right\}}
\newcommand{\td}[1]{\widetilde{#1}}
\newcommand{\br}[1]{\overline{#1}}
\newcommand{\VERT}[1]{{\left\vert\kern-0.25ex\left\vert\kern-0.25ex\left\vert #1
    \right\vert\kern-0.25ex\right\vert\kern-0.25ex\right\vert}}
\DeclareMathOperator{\dist}{dist}
\DeclareMathOperator{\sgn}{sgn}
\DeclareMathOperator{\supp}{supp}
\DeclareMathOperator{\tr}{tr}
\let\div\relax
\DeclareMathOperator{\div}{div}
\newcommand{\aeq}{\simeq}
\newcommand{\aleq}{\lesssim}
\newcommand{\ageq}{\gtrsim}
\newcommand{\uD}{\mathrm{D}}
\newcommand{\ud}{\mathrm{d}}
\newcommand{\rd}{\partial}
\newcommand{\nb}{\nabla}
\newcommand{\imp}{\Rightarrow}
\newcommand{\impmi}{\Leftrightarrow}
\newcommand{\bb}{\Big}
\newcommand{\0}{\emptyset}
\newcommand{\peq}{\relphantom{=}}			
\newcommand{\alp}{\alpha}
\newcommand{\bt}{\beta}
\newcommand{\gmm}{\gamma}
\newcommand{\Gmm}{\Gamma}
\newcommand{\dlt}{\delta}
\newcommand{\Dlt}{\Delta}
\newcommand{\eps}{\epsilon}
\newcommand{\kpp}{\kappa}
\newcommand{\lmb}{\lambda}
\newcommand{\Lmb}{\Lambda}
\newcommand{\sgm}{\sigma}
\newcommand{\Sgm}{\Sigma}
\newcommand{\tht}{\theta}
\newcommand{\Tht}{\Theta}
\newcommand{\omg}{\omega}
\newcommand{\Omg}{\Omega}
\newcommand{\bfe}{{\bf e}}
\newcommand{\bff}{{\bf f}}
\newcommand{\bfg}{{\bf g}}
\newcommand{\bfk}{{\bf k}}
\newcommand{\bfC}{{\bf C}}
\newcommand{\bfD}{{\bf D}}
\newcommand{\bfE}{{\bf E}}
\newcommand{\bfF}{{\bf F}}
\newcommand{\bfG}{{\bf G}}
\newcommand{\bfH}{{\bf H}}
\newcommand{\bfJ}{{\bf J}}
\newcommand{\bfM}{{\bf M}}
\newcommand{\bfP}{{\bf P}}
\newcommand{\bfQ}{{\bf Q}}
\newcommand{\bfT}{{\bf T}}
\newcommand{\bfX}{{\bf X}}
\newcommand{\bfY}{{\bf Y}}
\newcommand{\bfalp}{\boldsymbol{\alpha}}
\newcommand{\bfdlt}{\boldsymbol{\delta}}
\newcommand{\bfeta}{\boldsymbol{\eta}}
\newcommand{\bfnu}{\boldsymbol{\nu}}
\newcommand{\bfxi}{\boldsymbol{\xi}}
\newcommand{\bfPsi}{\boldsymbol{\Psi}}
\newcommand{\bfomg}{\boldsymbol{\omega}}
\newcommand{\bfOmg}{\boldsymbol{\Omega}}
\newcommand{\bbJ}{\mathbb J}
\newcommand{\bbN}{\mathbb N}
\newcommand{\bbP}{\mathbb P}
\newcommand{\bbR}{\mathbb R}
\newcommand{\bbS}{\mathbb S}
\newcommand{\bbU}{\mathbb U}
\newcommand{\bbZ}{\mathbb Z}
\newcommand{\calE}{\mathcal E}
\newcommand{\calH}{\mathcal H}
\newcommand{\calL}{\mathcal L}
\newcommand{\calM}{\mathcal M}
\newcommand{\calO}{\mathcal O}
\newcommand{\calQ}{\mathcal Q}
\newcommand{\calU}{\mathcal U}
\newcommand{\bm}{{\rm b}}
\newcommand{\Hb}{H_{\bm}}
\newcommand{\Ric}{\mathrm{Ric}}
\begin{document}

\title[]{Initial data gluing in the asymptotically flat regime \\ via solution operators with prescribed support properties}
\author{Yuchen Mao}%
\address{Department of Mathematics, UC Berkeley, Berkeley, CA, USA 94720}%
\email{yuchen\_mao@berkeley.edu}%

\author{Sung-Jin Oh}%
\address{Department of Mathematics, UC Berkeley, Berkeley, CA, USA 94720 and Korea Institute for Advanced Study, Seoul, Korea 02455}%
\email{sjoh@math.berkeley.edu}%

\author{Zhongkai Tao}%
\address{Department of Mathematics, UC Berkeley, Berkeley, CA, USA 94720}%
\email{ztao@math.berkeley.edu}%

%

\begin{abstract}
We give new proofs of general relativistic initial data gluing results on unit-scale annuli based on explicit solution operators for the linearized constraint equation around the flat case with prescribed support properties. These results retrieve and optimize -- in terms of positivity, regularity, size and/or spatial decay requirements -- a number of known theorems concerning asymptotically flat initial data, including Kerr exterior gluing by Corvino--Schoen and Chru\'{s}ciel--Delay, interior gluing (or ``fill-in'') by Bieri--Chru\'{s}ciel, and obstruction-free gluing by Czimek--Rodnianski. In particular, our proof of the strengthened obstruction-free gluing theorem relies on purely spacelike techniques, rather than null gluing as in the original approach.
\end{abstract}
\maketitle
\section{Introduction and main results} \label{sec:intro}
Let $(\calM^{3}, g)$ be a Riemannian manifold and $k$ be a symmetric (contravariant) $2$-tensor on $\calM^{3}$. The \emph{Einstein vacuum constraint equation} reads
\begin{equation} \label{eq:constraint}
\left\{
\begin{aligned}R[g] + (\tr_{g} k)^{2} - \abs{k}_{g}^{2} &= 0, \\
	\div_{g} k - \ud \tr_{g} k &= 0,
\end{aligned}
\right.
\end{equation}
where $R[g]$ is the scalar curvature of $g$, $\div_{g} k_{j} = D_{i} k_{i' j} (g^{-1})^{i i'}$ with $D_{i}$ the Levi-Civita connection associated with $g$, $\tr_{g} k = (g^{-1})^{ij} k_{ij}$ and $\abs{k}_{g}^{2} = k_{ij} k_{i' j'} (g^{-1})^{i i'} (g^{-1})^{j j'}$.
By the fundamental theorem of Choquet-Bruhat \cite{ChBr}, to any (sufficiently regular) triple $(\calM^{3}, g, k)$ solving \eqref{eq:constraint} corresponds a (geometrically) unique spacetime $(\calM^{1+3}, \bfg)$ solving the Einstein vacuum equation
\begin{equation} \label{eq:eve}
\Ric[\bfg] - \frac{1}{2} \bfg \tr_{\bfg} \Ric[\bfg] = 0,
\end{equation}
into which $(\calM^{3}, g)$ isometrically embeds with $k$ as the second fundamental form. In this sense, a pair $(g, k)$ solving \eqref{eq:constraint} constitutes an initial data set for the Einstein vacuum equation.

While underdetermined, the nonlinear nature of \eqref{eq:constraint} imposes nontrivial constraints on the class of initial data sets for the Einstein vacuum equation, as exemplified by the celebrated positive mass theorem \cite{SchYau, Wit}. Understanding the flexibility of initial data solving \eqref{eq:constraint} is often an indispensable part of the study of solutions to the Einstein vacuum equation.

The subject of this paper is \emph{initial data gluing}, which asks: given two initial data sets, find -- if possible -- another initial data set which contains the two. We focus on initial data sets $(\calM^{3}, g, k)$ such that either (1)~$(\calM^{3}, g)$ is bounded and $(g, k)$ is almost flat (i.e., close to the flat data $(\dlt, 0)$) or (2)~$\calM^{3}$ is unbounded but $(g, k)$ is asymptotically flat (i.e., $\calM^{3}$ is diffeomorphic to the exterior of a ball in $\bbR^{3}$ and $(g, k) \to (\dlt, 0)$ as $\abs{x} \to \infty$). These two cases are closely related as the latter is typically reduced to the former via rescaling. Since the pioneering work of Corvino \cite{Cor}, many results have appeared on initial data gluing in this setting, such as (1)~gluing a general asymptotically flat initial data set to the exterior of initial data for one of the Kerr spacetimes \cite{ChrCorIse, CorSch, ChrDel}, and (2)~gluing an interior region to a general asymptotically flat initial data defined in the exterior of a ball to produce a globally defined initial data \cite{Chr2, BieChr}, and so on. See also the recent works \cite{Hin1, Hin2, Hin3} adopting a geometric microlocal approach. For further discussion and additional references, we refer the reader to the excellent review article \cite{Car}.

The first contribution of this paper is a new short proof of a basic gluing result on an almost flat annulus at unit scale (Theorem~\ref{thm:gluing-annulus}), which leads to the above gluing results (1) and (2) for asymptotically flat initial data after suitable rescaling arguments; see also Theorem~\ref{thm:corvino-schoen} and Remark~\ref{rem:obs-free-af}. The main simplification comes from the use of explicit Bogovskii-type solution operators to solve the linearization of \eqref{eq:constraint} around the flat case $(g_{ij}, k_{ij}) = (\dlt_{ij}, 0)$ on an annular domain with zero boundary condition. As a byproduct of its simple and explicit nature, our proof readily handles the optimal (modulo the endpoint) Sobolev regularity.

Another contribution of this paper is a new approach to (a strengthening of) \emph{obstruction-free gluing} \`a la Czimek--Rodnianski \cite{CziRod}; see Theorems~\ref{thm:obs-free-unit} and~\ref{thm:obs-free-af} for the precise formulation. This novel type of gluing theorem, which relies on remarkable properties of the nonlinearity of \eqref{eq:constraint}, was originally proved in \cite{CziRod} using the theory of \emph{null} (or \emph{characteristic}) \emph{data gluing}, pioneered by Aretakis \cite{Are1, Are2} and Aretakis--Czimek--Rodnianski \cite{AreCziRod1, AreCziRod2, AreCziRod3} (see also \cite{ChrCon, KehUng1, KehUng2}).
Our direct proof, independent of null gluing, optimizes the positivity, regularity, size and spatial decay requirements in obstruction-free gluing for asymptotically flat initial data; see Remarks~\ref{rem:obs-free-unit} and~\ref{rem:obs-free-af}. It is based on the first main theorem (Theorem~\ref{thm:gluing-annulus}), as well as the construction of multi-bump initial data sets with prescribed charges and support properties that relies on several ingredients:
(1)~conic-type solution operators for the linearized constraint equations around the flat case as in \cite{MaoTao}, which can be used to construct initial data sets localized in conic regions (cf.~ Carlotto--Schoen \cite{CarSch} as well as \cite{AreCziRod1, AreCziRod2, AreCziRod3}); 
(2)~a \emph{nonlinear} computation similar to that of Bartnik in \cite{Bar} concerning the positivity of mass in the time-symmetric, almost flat case; and
(3)~a localized boost argument based on the fundamental theorem of Choquet-Bruhat \cite{ChBr} and the computation of Chru\'{s}ciel \cite{Chr1} (see also \cite{ChrDel}). A more detailed sketch of the proof is in Section~\ref{subsec:obs-free-overview} below, which can be read after Sections~\ref{sec:intro} and \ref{sec:div}. 

\begin{remark}[Solution operators to divergence-type equations with prescribed support]
As indicated above, the approach in this paper is based on the use of solution operators for the linearized constraint equations around the flat case whose support properties are prescribed (in a bounded set or in a cone); see Lemmas~\ref{lem:bogovskii-0} and \ref{lem:conic} below. In \cite{MaoTao}, a similar approach was employed to construct of localized initial data sets \`a la Carlotto--Schoen \cite{CarSch}. In fact, construction of such solution operators may be extended to other divergence-type equations with variable coefficients and/or on different manifolds; this will be addressed in an upcoming work \cite{IMOT}.
\end{remark}

\subsection{Notation and conventions}
Before stating the main results, we introduce some notation.
\begin{itemize}
\item As usual, $A \aleq B$ is the shorthand for $\abs{A} \leq C B$ for some $C > 0$ (implicit constant), which may differ from line to line. We write $A \aeq B$ if $A \aleq B$ and $B \aleq A$.
\item $\bbN$ is the set of (positive) natural integers, $\bbN_{0} = \bbN \cup \set{0}$.
\item We denote by $(x^{1}, x^{2}, x^{3})$ the rectangular coordinates on $\bbR^{3}$. We equip $\bbR^{3}$ with the Euclidean metric, given by the identity matrix $\dlt_{ij}$ with respect to $(x^{1}, x^{2}, x^{3})$. We use latin indices $i, j,\ldots$ for tensors on $\bbR^{3}$; all latin indices are raised and lowered using $\dlt$. Also, $\tr_{\dlt} T = \sum_{i} T_{ii}$.
\item We denote by $(x^{0}, x^{1}, x^{2}, x^{3})$ the rectangular coordinates on $\bbR^{1+3}$. We equip $\bbR^{1+3}$ with the Minkowski metric, given by the diagonal matrix $\bfeta_{\mu \nu}$ with entries $-1, +1, +1, +1$ with respect to $(x^{0}, x^{1}, x^{2}, x^{3})$. We use greek indices $\mu, \nu, \lmb, \ldots$ for tensors on $\bbR^{1+3}$; all greek indices are raised and lowered using $\bfeta$. Also, $\tr_{\bfeta} \bfT = \bfeta^{\mu \nu} \bfT_{\mu \nu}$.
\item We define $B_{r}(\xi)$ to be the ball of radius $r$ centered at $\xi$, $A_{r}(\xi) = B_{2r}(\xi) \setminus \br{B_{r}(\xi)}$ for an annulus of radii $\aeq r$ and $\td{A}_{r}(\xi) = B_{4r}(\xi) \setminus \br{B_{\frac{r}{2}}(\xi)}$ for an enlargement of $A_{r}$.  When $\xi = 0$, we shall omit $(\xi)$ and write $B_{r} = B_{r}(0)$, $A_{r} = A_{r}(0)$ etc. We introduce two pieces of notation for cones in $\bbR^{3}$: first, given $\omg \subseteq \bbS^{2}$, we define $C_{\omg} = \set{x \in \bbR^{3} : \tfrac{x}{r} \in \omg}$, and second, given $\tht \in (0, \pi)$ and $\bfomg \in \bbS^{2}$, we define $C_{\tht}(\bfomg) = \set{x \in \bbR^{3} : \angle(x, \bfomg) < \tht}$. We write $\Omg^{c}$ for the complement of $\Omg$ in $\bbR^{3}$, $-\Omg = \set{x \in \bbR^{3} : - x \in \Omg}$, and $\Omg + \xi = \set{x + \xi \in \bbR^{3} : x \in \Omg}$.
\item For $F_{ij}$ defined on a symmetric subset $\Omg$ of $\bbR^{3}$ (i.e., $\Omg = - \Omg$), we denote its decomposition into the even and odd parts by $F_{ij} = F_{ij}^{+} + F_{ij}^{-}$, where $F^{\pm}(x) = \frac{1}{2}(F(x) \pm F(-x))$.
\item  We introduce $\bfe_{i} = \rd_{i}$, $\bfnu = \frac{x^{j}}{\abs{x}} \bfe_{j}$ (the outward unit normal to $\rd B_{r}$) and $\bfY_{i} = \tensor{\eps}{_{i j}^{k}} x^{j} \bfe_{k}$ (the infinitesimal generator of rotation about the $x^{i}$-axis), where $\eps_{ijk}$ is the Levi-Civita symbol.
\item Given a Banach space $X = X(\bbR^{3})$ of (possibly vector-valued) functions on $\bbR^{3}$ and an open subset $\Omg \subseteq \bbR^{3}$, we define the space of extendible functions on $\Omg$ as $X(\Omg) = X(\bbR^{3}) / \set{u \in X(\bbR^{3}) : \left. u \right|_{\Omg} = 0}$, equipped with the $\nrm{u}_{X(\Omg)} = \inf_{\td{u} \in X(\bbR^{3}) : \left. \td{u} \right|_{\Omg} = u} \nrm{\td{u}}_{X}$. We also define the space of functions supported in $\Omg$, denoted by $X_{0}(\Omg)$, to be the completion of $C^{\infty}_{c}(\Omg)$ with respect to  $\nrm{\cdot}_{X(\Omg)}$.
\end{itemize}
\subsection{Scaling invariance and charges in the almost flat regime}
In this paper, we shall consider solutions $(g, k)$ to \eqref{eq:constraint} equipped with global coordinates $x^{1}, x^{2}, x^{3}$ -- or more concretely, defined on a subset $\Omg$ of $\bbR^{3}$. For any such initial data $(g, k)$ and any $r > 0$, define $(g^{(r)}, k^{(r)})$ by
\begin{equation} \label{eq:scaling}
	(g, k) \mapsto (g^{(r)}(x), k^{(r)}(x)) := (g(r x), r k(r x)).
\end{equation}
Observe that \eqref{eq:constraint} is invariant under \eqref{eq:scaling}, which we shall call the \emph{invariant scaling transformation}.

We also introduce the following charges of $(g, k)$ measured on the sphere $\rd B_{r}$, which are conserved (i.e., independent of $r$) if $(g, k)$ solves the linearization of \eqref{eq:constraint} around $(\dlt, 0)$ (see Lemma~\ref{lem:charge} below):
\begin{align}
\bfE[(g, k); \rd B_{r}] &= \frac{1}{2} \int_{\rd B_{r}} \sum_{i} \left( \rd_{i} g_{ij} - \rd_{j} g_{ii} \right) \bfnu^{j}\, \ud S , \label{eq:charge-E} \\
\bfP_{i}[(g, k); \rd B_{r}] &= \int_{\rd B_{r}} \left( k_{ij} -  \dlt_{ij} \tr_{\dlt} k \right) \bfnu^{j}\, \ud S, \label{eq:charge-P} \\
\bfC_{k} [(g, k); \rd B_{r}] &= \frac{1}{2} \int_{\rd B_{r}} \sum_{i} \left( x_{k} \rd_{i} g_{ij} - x_{k} \rd_{j} g_{ii} - \dlt_{ik} (g_{ij} - \dlt_{ij}) + \dlt_{jk} (g_{ii} - \dlt_{ii})   \right) \bfnu^{j}\, \ud S, \label{eq:charge-C} \\
\bfJ_{k} [(g, k); \rd B_{r}] &= \int_{\rd B_{r}} \left( k_{ij} -  \dlt_{ij} \tr_{\dlt} k \right) Y^{i}_{k} \bfnu^{j}\, \ud S. \label{eq:charge-J}
\end{align}
We put these together to form a $10$-vector,
\begin{equation*}
	\bfQ[(g, k); \rd B_{r}] = (\bfE, \bfP_{1}, \bfP_{2}, \bfP_{3}, \bfC_{1}, \bfC_{2}, \bfC_{3}, \bfJ_{1}, \bfJ_{2}, \bfJ_{3})^{\dagger} [(g, k); \rd B_{r}].
\end{equation*}
Taking (formally) the limit as $r \to \infty$ leads to the usual ADM energy, linear momenta, center of mass and angular momenta under a suitable asymptotic flatness assumption (see Definition~\ref{def:af} and Lemma~\ref{lem:af} below). For these quantities, we introduce the notation
\begin{equation*}
	\bfQ^{ADM}[(g, k)] = (\bfE^{ADM}, \ldots, \bfJ^{ADM}_{3})^{\dagger} [(g, k)] := (\lim_{r \to \infty} \bfE[(g, k); \rd B_{r}], \ldots, \lim_{r \to \infty} \bfJ_{3}[(g, k); \rd B_{r}])^{\dagger}.
\end{equation*}
Since we will be working with annuli, it is also convenient to introduce the following (smoothly) \emph{averaged charges}. Fix $\eta \in C^{\infty}_{c}(0, \infty)$ such that $\supp \eta \subseteq (1, 2)$, $\int \eta(r') \, \ud r' = 1$ and $\eta_{r}(r') := r^{-1} \eta(r^{-1} r')$. We define
\begin{align}
\bfQ[(g, k); A_{r}] = (\bfE, \ldots, \bfJ_{3})[(g, k); A_{r}] = \int \eta_{r}(r')(\bfE, \ldots, \bfJ_{3})[(g, k); \rd B_{r'}] \, \ud r'. \label{eq:charge-avg}
\end{align}
In accordance with this notation, we shall denote the components of $Q \in \bbR^{10}$ by
\begin{equation*}
	Q = (\bfE(Q), \bfP_{1}(Q), \bfP_{2}(Q), \bfP_{3}(Q), \bfC_{1}(Q), \bfC_{2}(Q), \bfC_{3}(Q), \bfJ_{1}(Q), \bfJ_{2}(Q), \bfJ_{3}(Q))^{\dagger}.
\end{equation*}
These quantities are conserved for solutions to the linearization of \eqref{eq:constraint} around $(\dlt, 0)$; see Lemma~\ref{lem:charge}. They are \emph{not} invariant under \eqref{eq:scaling}, but transform as follows:
\begin{equation*}
	(\bfE, \bfP)[(g^{(r)}, k^{(r)}); A_{R_{0}}] = r^{-1} (\bfE, \bfP)[(g, k); A_{r R_{0}}], \quad
	(\bfC, \bfJ)[(g^{(r)}, k^{(r)}); A_{R_{0}}] = r^{-2} (\bfC, \bfJ)[(g, k); A_{r R_{0}}],
\end{equation*}
and similarly for $\bfQ[(g, k); \rd B_{r}]$.

\subsection{Gluing up to linear obstructions} \label{subsec:obs-gluing}
Suppose that we are given two disjoint dyadic annuli $A_{R_{in}}$ and $A_{R_{out}}$ (i.e., $2 R_{in} < R_{out}$) and initial data sets $(g_{in}, k_{in})$ and $(g_{out}, k_{out})$ on the annuli $A_{R_{in}}$ and $A_{R_{out}}$, respectively, that are almost flat (i.e., close to $(\dlt, 0)$ in some suitable sense). Consider the following gluing problem:~{\it find an almost flat initial data set $(g, k)$ on $B_{2 R_{out}} \setminus \br{B_{R_{in}}}$ that agrees with $(g_{in}, k_{in})$ and $(g_{out}, k_{out})$ on $A_{R_{in}}$ and $A_{R_{out}}$, respectively.}

If all the initial data sets solve not \eqref{eq:constraint} but rather its linearization around $(\dlt, 0)$, then an obvious necessary condition for the solvability of this problem is $\bfQ[(g_{in}, k_{in}); A_{R_{in}}] = \bfQ[(g_{out}, k_{out}); A_{R_{out}}]$, in view of the conservation laws (see Lemma~\ref{lem:charge} below). In fact, this condition turns out to be sufficient as well\footnote{Sufficiency of $\bfQ[(g_{in}, k_{in}); A_{R_{in}}] = \bfQ[(g_{out}, k_{out}); A_{R_{out}}]$ for linearized gluing can be established using Lemma~\ref{lem:bogovskii} along with simple extension procedures; we leave the details to the interested reader.}; in this sense, (the failure of) the condition $\bfQ[(g_{in}, k_{in}); A_{R_{in}}] = \bfQ[(g_{out}, k_{out}); A_{R_{out}}]$ is precisely the obstruction for linearized initial data gluing.

Our first main result generalizes the above linear considerations to the nonlinear setting.
To state it, we first need to formulate the notion of \emph{admissible initial data sets} (cf.~\cite{CorSch}) on an annulus.

\begin{definition}[$\calQ$-admissible initial data sets] \label{def:adm-extr}
Let a bounded open subset $\calQ \subseteq \bbR^{10}$ and $s \in \bbR$ be given.
We say that a $10$-parameter family $\set{(g_{Q}, k_{Q})}_{\calQ}$ is a \emph{$\calQ$-admissible family of annular initial data sets} on $\td{A}_{r}$ with Sobolev regularity $s$ if:
\begin{enumerate}
\item for each $Q \in \calQ$, $(g_{Q}, k_{Q}) \in (H^{1} \cap C^{0}) \times L^{2} (\td{A}_{r})$ and solves \eqref{eq:constraint} in $\td{A}_{r}$;
\item for each $Q \in \calQ$, $(g_{Q}, k_{Q}) \in H^{s} \times H^{s-1} (\td{A}_{r})$;
\item the map $\calQ \to H^{s}\times H^{s-1}(\td{A}_{r})$ defined by $Q \mapsto (g_{Q}, k_{Q})$ is Lipschitz;
\item for each $Q  \in \calQ \subseteq \bbR^{10}$, we have $Q = \bfQ[(g_{Q}, k_{Q}); A_{r}]$.
\end{enumerate}
\end{definition}

\begin{theorem} [Gluing up to linear obstructions at unit scale]\label{thm:gluing-annulus}
For each $s > \frac{3}{2}$, there exist $\eps_{c} = \eps_{c}(s) > 0$ and $M_{c} = M_{c}(s) > 0$ such that the following holds. Let $(\mathring{g}, \mathring{k})$ be a solution to \eqref{eq:constraint} in $H^{s} \times H^{s-1}(\td{A}_{1})$ such that
\begin{equation} \label{eq:gluing-epss}
\nrm{(\mathring{g} - \dlt, \mathring{k})}_{H^{s} \times H^{s-1}(\td{A}_{1})} \leq \eps.
\end{equation}
Define $\mathring{Q} \in \bbR^{10}$ by $\mathring{Q} = \bfQ[(\mathring{g}, \mathring{k}); A_{1}]$. Let $\calQ \subseteq \bbR^{10}$ be a bounded open set such that
\begin{equation} \label{eq:gluing-q}
	\mathring{Q} \in \calQ, \quad B_{M_{c} \eps^{2}}(\mathring{Q}) \subseteq \calQ.
\end{equation}
and consider an $\calQ$-admissible family of annular initial data sets  $\set{(g_{Q}, k_{Q})}_{Q \in \calQ}$ on $\td{A}_{1}$ with Sobolev regularity $s$ such that, for all $Q, Q' \in \calQ$,
\begin{align}
	\nrm{(g_{Q} - \dlt, k_{Q})}_{H^{s} \times H^{s-1}(\td{A}_{1})} &\leq \eps, \label{eq:gluing-adm-epss} \\
	\nrm{(g_{Q} - g_{Q'}, k_{Q} - k_{Q'})}_{H^{s} \times H^{s-1}(\td{A}_{1})} &\leq K \abs{Q-Q'}. \label{eq:gluing-adm-lip}
\end{align}
Then the following holds:
\begin{enumerate}
\item {\bf Exterior gluing.} If $\eps < \eps_{c}$ and $K \eps < \eps_{c}$, then there exists $(g, k) \in H^{s} \times H^{s-1}(\td{A}_{1})$ and $Q \in \calQ$ such that $\nrm{(g-\dlt, k)}_{H^{s} \times H^{s-1}(\td{A}_{1})} \aleq \eps$, $\abs{Q - \mathring{Q}} \leq M_{c} \eps^{2}$, and
\begin{equation*}
	(g, k) = \begin{cases}
	(\mathring{g}, \mathring{k}) & \hbox{ in } A_{\frac{1}{2}}, \\
	(g_{Q}, k_{Q}) & \hbox{ in } A_{2}.
	\end{cases}
\end{equation*}

\item {\bf Interior gluing.} If $\eps < \eps_{c}$ and $K \eps < \eps_{c}$, then there exists $(g, k) \in H^{s} \times H^{s-1}(\td{A}_{1})$ and $Q \in \calQ$ such that $\nrm{(g-\dlt, k)}_{H^{s} \times H^{s-1}(\td{A}_{1})} \aleq \eps$, $\abs{Q - \mathring{Q}} \leq M_{c} \eps^{2}$, and
\begin{equation*}
	(g, k) = \begin{cases}
	(g_{Q}, k_{Q}) & \hbox{ in } A_{\frac{1}{2}}, \\
	(\mathring{g}, \mathring{k}) & \hbox{ in } A_{2}.
	\end{cases}
\end{equation*}
\end{enumerate}

In both cases, the following additional statements hold as well:
\begin{itemize}
\item (Lipschitz continuity) The map $(\mathring{g}, \mathring{k}) \mapsto (g, k, Q)$ is Lipschitz as a map from the subset of $H^{s} \times H^{s-1}$ restricted by \eqref{eq:gluing-epss}--\eqref{eq:gluing-q} into $H^{s} \times H^{s-1} \times \bbR^{10}$.

\item (Persistence of regularity) If $(\mathring{g}, \mathring{k}) \in H^{s+m} \times H^{s+m-1}(\td{A}_{1})$ and the family $\set{(g_{Q}, k_{Q})}_{Q \in \calQ}$ is of Sobolev regularity $s+m$ for $m \in \bbN$, then $(g, k) \in H^{s+m} \times H^{s+m-1}(\td{A}_{1})$ and
\begin{equation*}
\begin{aligned}
	\nrm{(g-\dlt, k)}_{H^{s+m} \times H^{s+m-1}(\td{A}_{1})} &\leq C \nrm{(\mathring{g}-\dlt, \mathring{k})}_{H^{s+m} \times H^{s+m-1}(\td{A}_{1})}\\
	&\peq + C \nrm{(g_{Q}-\dlt, k_{Q})}_{H^{s+m} \times H^{s+m-1}(\td{A}_{1})},
\end{aligned}\end{equation*}
where $C = C(s, m, \nrm{(\mathring{g}-\dlt, \mathring{k})}_{H^{s+1} \times H^{s}(\td{A}_{1})}, \nrm{(g_{Q}-\dlt, k_{Q})}_{H^{s+1} \times H^{s}(\td{A}_{1})})$.
\end{itemize}
\end{theorem}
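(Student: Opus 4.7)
The plan is a Newton-style iteration that reduces the nonlinear gluing problem to repeated solution of the linearized constraints via a Bogovskii-type right inverse, supplied by a forthcoming lemma (Lemma~\ref{lem:bogovskii-0}), whose output is supported inside $A_1$. I treat the exterior case in detail; the interior case is symmetric.

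First I would fix a smooth cutoff $\chi$ on $\bbR^3$ with $\chi\equiv 1$ on $B_1$ and $\chi\equiv 0$ outside $B_{3/2}$, and, for $Q\in\calQ$, form the naive glue
\[
(g_\chi, k_\chi) := (\dlt, 0) + \chi\,(\mathring g - \dlt,\mathring k) + (1-\chi)\,(g_Q - \dlt, k_Q),
\]
which already coincides with $(\mathring g,\mathring k)$ on $A_{1/2}$ and with $(g_Q,k_Q)$ on $A_2$. The task reduces to finding a correction $(h,\pi)$ supported in the transition shell $\supp(\nb\chi)\subseteq A_1$ such that $(g_\chi + h, k_\chi + \pi)$ solves \eqref{eq:constraint}. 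Writing the constraint operator as $\calC$, its linearization at $(\dlt,0)$ as $L$, and the nonlinear remainder by $\calC(g_\chi + h, k_\chi + \pi) = \calC(g_\chi, k_\chi) + L(h,\pi) + \calN(h,\pi;g_\chi,k_\chi)$, the equation for the correction is
\[
L(h,\pi) = -\calC(g_\chi, k_\chi) - \calN(h,\pi; g_\chi, k_\chi) =: (\rho, J).
\]
Since $\mathring g$ and $g_Q$ are exact solutions on $A_{1/2}$ and $A_2$, the source $(\rho, J)$ is supported in $\supp(\nb\chi)$ and has $H^{s-2}\times H^{s-1}$-norm $O(\eps)$.

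The Bogovskii-type right inverse of Lemma~\ref{lem:bogovskii-0} would yield $(h,\pi)$ supported in $A_1$ with norm $\lesssim \nrm{(\rho, J)}_{H^{s-2}\times H^{s-1}}$, provided $(\rho, J)$ lies in the range of $L$ acting on compactly supported fields. The cokernel of this problem is exactly $10$-dimensional, spanned by $\set{1, x^i}$ in the scalar slot (paired with $\bfE, \bfC_i$) and by $\set{\bfe_i, \bfY_i}$ in the vector slot (paired with $\bfP_i, \bfJ_i$); by the divergence identities of Lemma~\ref{lem:charge}, the associated moments of $(\rho, J)$ equal the jump $Q - \mathring Q$ plus a nonlinear $O(\eps^2)$ contribution. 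I would therefore set up the self-map $\Phi : (h,\pi,Q) \mapsto (h', \pi', Q')$ by first reading off the $10$ moments of the source built from $(h,\pi,Q)$, then choosing $Q'\in\calQ$ so that $Q' - \mathring Q$ cancels them (which is feasible thanks to the admissibility identity $\bfQ[(g_{Q'}, k_{Q'}); A_1] = Q'$ together with the inclusion $B_{M_c \eps^2}(\mathring Q)\subseteq\calQ$), and finally defining $(h',\pi')$ as the Bogovskii solution of the resulting charge-free source.

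Closing the argument is then a Banach contraction on $\set{\nrm{(h,\pi)}_{H^s\times H^{s-1}}\leq M\eps}\times \br{B_{M\eps^2}(\mathring Q)}$ for suitable $M$: invariance follows from the $O(\eps)$ source bound and the Bogovskii estimate, while contractivity uses the bilinearity of $\calN$ (which provides a gain of one power of $\eps$ in differences) combined with the Lipschitz hypothesis~\eqref{eq:gluing-adm-lip} and the smallness $K\eps < \eps_c$. Lipschitz dependence of the fixed point on $(\mathring g, \mathring k)$ is automatic, and persistence of regularity follows from a standard bootstrap exploiting the $H^{s+m}\times H^{s+m-1}$-boundedness of the Bogovskii operator and the fact that $H^{s+m}$ is a Banach algebra for $s > 3/2$. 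The principal technical obstacle is the construction of the Bogovskii right inverse with the prescribed support properties and sharp Sobolev mapping bounds; once Lemma~\ref{lem:bogovskii-0} is granted, the remaining work is bookkeeping organized around identifying the charge moments of the defect via Lemma~\ref{lem:charge} and verifying the contraction estimates.
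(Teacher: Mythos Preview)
Your approach is essentially that of the paper: cutoff to form a naive glue, correct via a Bogovskii-type right inverse supported in $A_1$, identify the $10$-dimensional cokernel with the charges of Lemma~\ref{lem:charge}, and close by a contraction. Two small points of divergence are worth flagging. First, the annulus $A_1$ is not star-shaped, so Lemma~\ref{lem:bogovskii-0} does not apply directly; the paper instead invokes Lemma~\ref{lem:bogovskii}, which patches together the star-shaped operators of Lemma~\ref{lem:bogovskii-0} on a finite cover of $A_1$. Second, the paper organizes the iteration sequentially rather than jointly: for \emph{each} $Q\in\calQ$ it solves the Picard problem $(\td h_Q,\td\pi_Q)=\vec S(\td M_Q,\td N_Q)$ (applying $\vec S$ to the full source without first killing the moments, since $\vec S$ is bounded on all of $H^{s-2}$ and only the identity $(S2)$/$(T2)$ requires vanishing moments), and \emph{then} determines $Q$ from the scalar fixed-point $Q=\mathring Q+n[Q]$ with $|n[Q]|\aleq\eps^2$ and $\nrm{n}_{\mathrm{Lip}}\aleq K\eps$. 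Your joint iteration on $(h,\pi,Q)$ can be made to work as well, but as written the step ``choose $Q'$ so the resulting source is charge-free'' is implicit (the moments still depend on $Q'$), and closing the contraction with only $K\eps<\eps_c$ then requires a suitable weighting of the $Q$-component; the paper's decoupled scheme avoids this bookkeeping.
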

Theorem~\ref{thm:gluing-annulus} is proved in Section~\ref{sec:gluing}.

\begin{remark}
\begin{enumerate}[leftmargin=*]
\item In view of the fact that $\dot{H}^{\frac{3}{2}} \times \dot{H}^{\frac{1}{2}}$ is invariant under \eqref{eq:scaling}, the regularity requirement $s > \frac{3}{2}$ is optimal modulo the endpoint. Moreover, using different Moser and product estimates on $\bbR^{3}$ in lieu Lemma~\ref{lem:moser-product} below, it is not difficult to extend the result to the case when $H^{s}$ is replaced by $B^{\frac{3}{2}, 2}_{1}$ (scaling invariant Besov space) or $W^{s, p}$ with $s > \frac{3}{p}$ and $1 < p < 3$.
\item We formulated Theorem~\ref{thm:gluing-annulus} for initial data sets on $\td{A}_{1}$ for technical convenience. Nonetheless, by the extension lemmas proved below (Lemmas~\ref{lem:ext-outgoing} and \ref{lem:ext-ingoing}), the theorem may be immediately reformulated in terms of initial data in two annuli $A_{\frac{1}{2}}$ and $A_{2}$ with averaged charges measured in the respective annuli. As alluded to before, this formulation provides a sufficient condition for solving the gluing problem with $R_{in} = \frac{1}{2}$ and $R_{out} = 2$ analogous to the linear problem. The result may also be easily extended to any other pair of concentric annuli whose closures are disjoint.
\end{enumerate}
\end{remark}

Using Theorem~\ref{thm:gluing-annulus}, we may retrieve the celebrated gluing result of Corvino--Schoen \cite{CorSch} (see also Chru\'{s}ciel--Delay \cite{ChrDel} and Chru\'{s}ciel--Corvino--Isenberg \cite{ChrCorIse}) for asymptotically flat initial data sets, under (essentially) optimal assumptions on the regularity and the spatial decay. We adopt the following definition of asymptotic flatness:
\begin{definition}\label{def:af}
Let $(g, k) \in H^{s}_{loc} \times H^{s-1}_{loc}(B_{R_{0}}^{c})$ for $s > \frac{3}{2}$ and $R_{0} \in 2^{\bbN}$. We say that $(g, k)$ is \emph{$\alp$-asymptotically flat} if, for some $\alp \in \bbR$ and $D_{0} > 0$, we have
\begin{equation} \label{eq:w-af}
	\nrm{(g-\dlt, k)}_{\dot{\calH}^{s} \times \dot{\calH}^{s-1}(\td{A}_{r})} \leq D_{0} r^{-s+\frac{3}{2}-\alp} \quad \hbox{ for all } r \in 2^{\bbN} \cap B_{R_{0}}^{c},
\end{equation}
where\footnote{When $s \in \bbN$, observe that $\nrm{u}_{\dot{\calH}^{s}(\td{A}_{R})} \aeq \nrm{\rd^{(s)}_{x} u}_{L^{2}(\td{A}_{R})} + R^{-s} \nrm{u}_{L^{2}(\td{A}_{R})}$. } $\nrm{u}_{\dot{\calH}^{s}(\td{A}_{R})} := R^{-s+\frac{3}{2}} \nrm{u (R x) }_{H^{s}(\td{A}_{1})}$.
\end{definition}

By the Sobolev embedding, $\alp$ corresponds to the pointwise decay rate of $g - \dlt$ in $\abs{x}^{-1}$.

\begin{theorem}[Corvino--Schoen \cite{CorSch} and Chru\'{s}ciel--Delay \cite{ChrDel}; see also \cite{ChrCorIse}] \label{thm:corvino-schoen}
Let $s > \frac{3}{2}$ and $\alp > \frac{1}{2}$. Given any $\alp$-asymptotically flat initial data $(g_{in}, k_{in}) \in H_{loc}^{s} \times H_{loc}^{s-1} (B_{R_{0}}^{c})$ solving \eqref{eq:constraint} and satisfying $\abs{\bfE^{ADM}[(g, k)]} > \abs{\bfP^{ADM}[(g, k)]}$, there exist $(g, k) \in H_{loc}^{s} \times H_{loc}^{s-1}(B_{R_{0}}^{c})$ solving \eqref{eq:constraint} and $r \geq R_{0}$ such that $(g, k) = (g_{in}, k_{in})$ on $B_{r}$ and $(g, k)$ equals an initial data set for one of the Kerr spacetimes on $B_{2r}^{c}$.
\end{theorem}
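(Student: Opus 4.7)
The plan is to reduce Theorem~\ref{thm:corvino-schoen} to the \emph{exterior gluing} statement of Theorem~\ref{thm:gluing-annulus}(1) via the scaling transformation \eqref{eq:scaling}, with the target $\calQ$-admissible family built from (boosted, translated, rotated) Kerr initial data sets.

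\textbf{Step 1: Rescale to unit scale.} Pick a large dyadic radius $r \geq R_{0}$ to be chosen, and set $(\mathring{g}, \mathring{k}) \ceq (g_{in}^{(r)}, k_{in}^{(r)})$ on $\td{A}_{1}$. The asymptotic flatness \eqref{eq:w-af} yields
\begin{equation*}
\nrm{(\mathring{g}-\dlt, \mathring{k})}_{H^{s}\times H^{s-1}(\td{A}_{1})} \aleq D_{0} r^{-\alp} \eqc \eps,
\end{equation*}
which is smaller than $\eps_{c}$ for $r$ large. The charge scaling and the convergence of annular charges to ADM charges (Lemma~\ref{lem:af}) give, componentwise,
\begin{equation*}
\mathring{Q} = \bfQ[(\mathring{g}, \mathring{k}); A_{1}] = \bigl(r^{-1}\bfE^{ADM}, r^{-1}\bfP^{ADM}, r^{-2}\bfC^{ADM}, r^{-2}\bfJ^{ADM}\bigr) + o(r^{-1}),
\end{equation*}
so in particular $\abs{\bfE(\mathring{Q})} - \abs{\bfP(\mathring{Q})} \aeq r^{-1}(\abs{\bfE^{ADM}} - \abs{\bfP^{ADM}}) > 0$ by hypothesis.

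\textbf{Step 2: Build the Kerr admissible family.} Kerr initial data, combined with spatial translations and Lorentz boosts, form a smooth $10$-parameter family of asymptotically flat vacuum initial data sets in bijection with $(\bfE, \bfP, \bfC, \bfJ)^{ADM}$ on the open subextremal set $\{\abs{\bfE^{ADM}} > \abs{\bfP^{ADM}}\}$. Given $Q$ in an open $\calQ \subseteq \bbR^{10}$ near $\mathring{Q}$, pick the Kerr initial data with ADM charges $(r\bfE(Q), r\bfP(Q), r^{2}\bfC(Q), r^{2}\bfJ(Q))$ on $\td{A}_{r}$, then rescale to $\td{A}_{1}$ to obtain $(g_{Q}, k_{Q})$. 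The $|g-\dlt| \aleq 1/|x|$ decay of Kerr yields $\nrm{(g_{Q}-\dlt, k_{Q})}_{H^{s}\times H^{s-1}(\td{A}_{1})} \aleq r^{-1} \aleq \eps$ and a Lipschitz constant $K = O(1)$ in $Q$. For item~(4) of Definition~\ref{def:adm-extr} (exact $A_{1}$-charge matching), we would apply the implicit function theorem: the map from rescaled ADM parameters to $A_{1}$-averaged charges equals the identity plus $o(1)$ as $r\to\infty$, since nonlinear corrections to the Kerr charge flux integrands contribute $O(r^{-2})$ per unit area at radius $r$, which vanishes after rescaling.

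\textbf{Step 3: Apply Theorem~\ref{thm:gluing-annulus}(1) and patch.} The crucial inclusion $B_{M_{c}\eps^{2}}(\mathring{Q}) \subseteq \calQ$ is verified by choosing $\calQ$ to be a ball of radius $\aeq r^{-1}(\abs{\bfE^{ADM}}-\abs{\bfP^{ADM}})$, keeping it strictly inside the subextremal set where the Kerr family is defined. The inclusion then reduces to $M_{c} r^{-2\alp} \aleq r^{-1}$, which holds for $r$ large \emph{precisely because} $\alp > \tfrac{1}{2}$. Exterior gluing produces $(\td{g}, \td{k}) \in H^{s} \times H^{s-1}(\td{A}_{1})$ equal to $(\mathring{g}, \mathring{k})$ on $A_{1/2}$ and to $(g_{Q}, k_{Q})$ on $A_{2}$ for some $Q \in \calQ$. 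Undoing the rescaling, define $(g, k)$ piecewise on $B_{R_{0}}^{c}$ as $(g_{in}, k_{in})$ on $B_{r/2}\cap B_{R_{0}}^{c}$, as the rescaled $(\td{g}, \td{k})$ on $\td{A}_{r}$, and as the chosen Kerr initial data on $B_{4r}^{c}$; the three pieces overlap consistently on $A_{r/2}$ and $A_{2r}$ by construction, producing the desired solution equal to $(g_{in}, k_{in})$ on $B_{r}$ and to a Kerr initial data set on $B_{2r}^{c}$.

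\textbf{Main obstacle.} The principal technical hurdle is the construction and uniform-in-$r$ Lipschitz parametrization of the Kerr admissible family, most notably the implicit function correction needed to enforce item~(4) exactly on $A_{1}$ rather than only asymptotically. Once this is handled, the role of the decay exponent $\alp > \tfrac{1}{2}$ is transparent: it is exactly the threshold that balances the quadratic nonlinear gluing error $M_{c}\eps^{2} = M_{c} r^{-2\alp}$ against the $r^{-1}$ room available for perturbing within the subextremal Kerr family.
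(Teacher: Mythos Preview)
Your approach is essentially the paper's: rescale to the unit annulus, build the Kerr admissible family, reparametrize via an inverse function argument so that the $A_{1}$-averaged charges hit $Q$ exactly, then apply Theorem~\ref{thm:gluing-annulus}.(1). Two small corrections. First, in Step~1 you invoke $\bfC^{ADM}$ and $\bfJ^{ADM}$, but for generic $\alp \in (\tfrac{1}{2}, 1]$ these limits need not exist (Lemma~\ref{lem:af}.(3) requires a parity condition). The paper instead uses only the bound $\abs{(\bfC, \bfJ)[(g_{in}^{(r)}, k_{in}^{(r)}); A_{1}]} \aleq r^{-2\alp}$ from Lemma~\ref{lem:af}.(2); this is all you need to keep the Kerr parameters $(r\bfE, r\bfP, r^{2}\bfC, r^{2}\bfJ)$ inside the domain \eqref{eq:extr-kerr-r} where the family of Lemma~\ref{lem:kerr-extr} is defined on $\td{A}_{r}$, and to fit $\calQ$ inside the subextremal region. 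Second, the paper reduces WLOG to $\alp < 1$ at the outset so that the Kerr norm $O(r^{-1})$ is indeed $\aleq \eps = O(r^{-\alp})$; your inequality $r^{-1} \aleq \eps$ in Step~2 implicitly uses this.
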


Theorem~\ref{thm:corvino-schoen} is proved in Section~\ref{sec:af}. We remark that $\alp > \frac{1}{2}$ is the sharp (up to endpoint) threshold for the ADM energy-momentum to be well-defined \cite{Bar}; see also Lemma~\ref{lem:af} for sufficiency.

 \subsection{Obstruction-free gluing} \label{subsec:obs-free-results}

We return to the problem of gluing two initial data sets $(A_{R_{in}}, g_{in}, k_{in})$ and $(A_{R_{out}}, g_{out}, k_{out})$ that are almost flat. Recall from the discussion in Section~\ref{subsec:obs-gluing} that the \emph{linearized} gluing problem is solvable (if and) only if the $10$ charges $\bfQ$ are identical. Remarkably, the original \emph{nonlinear} problem turns out to be solvable if these identities among charges (obstructions) are replaced by mere positivity conditions! This novel type of gluing -- \emph{obstruction-free gluing} -- has been recently introduced and established by Czimek--Rodnianski\footnote{We note that, instead of annular data, \cite{CziRod} work with data on two spheres.} \cite{CziRod}.

We provide a new purely spacelike proof of obstruction-free gluing (the original approach of \cite{CziRod} involves null gluing), and furthermore sharpen the positivity, regularity and size requirements -- see Remark~\ref{rem:obs-free-unit} below. Our main result for almost flat data on two annuli reads as follows.

\begin{theorem}[Obstruction-free gluing for almost flat data on annuli] \label{thm:obs-free-unit}
Given $s > \frac{3}{2}$ and $\Gmm > 1$, there exist $\eps_{o} = \eps_{o}(s, {\Gmm}) > 0$, $\mu_{o} = \mu_{o}(s, \Gmm) > 0$ and $C_{o} = C_{o}(s, \Gmm) > 0$ such that the following holds. Let $(g_{in}, k_{in}) \in H^{s} \times H^{s-1} (A_{1})$, $(g_{out}, k_{out}) \in H^{s} \times H^{s-1} (A_{32})$ be pairs solving \eqref{eq:constraint}. Define $\Dlt Q = (\Dlt \bfE, \ldots, \Dlt \bfJ_{3}) \in \bbR^{10}$ by
\begin{equation} \label{eq:Dlt-Q}
	\Dlt Q = \bfQ[(g_{out}, k_{out}); A_{32}] - \bfQ[(g_{in}, k_{in}; A_{1})],
\end{equation}
and assume that
\begin{gather}
	\Dlt \bfE > \abs{\Dlt \bfP}, \label{eq:obs-free-unit:EP} \\
	\tfrac{\Dlt \bfE}{\sqrt{(\Dlt \bfE)^{2} - \abs{\Dlt \bfP}^{2}}} < \Gmm, \label{eq:obs-free-unit:gmm} \\
	\Dlt \bfE < \eps_{o}^{2}, \label{eq:obs-free-unit:eps} \\
	\abs{\Dlt \bfC} + \abs{\Dlt \bfJ} < \mu_{o} \Dlt \bfE, \label{eq:obs-free-unit:CJ}\\
	\nrm{(g_{in} - \dlt, k_{in})}_{H^{s} \times H^{s-1}(A_{1})}^{2} + \nrm{(g_{out} - \dlt, k_{out})}_{H^{s} \times H^{s-1}(A_{32})}^{2} < \mu_{o} \Dlt \bfE \label{eq:obs-free-unit:data}.
\end{gather}
Then there exists $(g, k) \in H^{s} \times H^{s-1}(B_{64} \setminus \br{B_{1}})$ solving \eqref{eq:constraint} such that
\begin{gather}
(g, k) = (g_{in}, k_{in}) \quad \hbox{in } A_{1}, \quad (g, k) = (g_{out}, k_{out}) \quad \hbox{in } A_{32}, \label{eq:obs-free-gluing} \\
\nrm{(g - \dlt, k)}_{H^{s} \times H^{s-1}(B_{64} \setminus \br{B_{1}})}^{2} < C_{o} \Dlt \bfE. \label{eq:obs-free-conc}
\end{gather}
\end{theorem}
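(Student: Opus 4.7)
The plan is to insert a ``designer'' initial data set in an intermediate region between $A_{1}$ and $A_{32}$ whose presence absorbs the charge defect $\Delta Q$, and then to invoke Theorem~\ref{thm:gluing-annulus} on both sides to complete the gluing. Concretely, I would fix an intermediate scale (say $r = 4$) and construct a Lipschitz $\calQ$-admissible family $\{(g_{\Pi}, k_{\Pi})\}$ of almost flat data on $\td{A}_{r}$, with $\bfQ[(g_{\Pi}, k_{\Pi}); A_{r}] = \Pi$ for $\Pi$ ranging over a ball of radius $\sim \mu_{o} \Delta \bfE$ around a target value near $\Delta Q$, and with $(g_{\Pi} - \dlt, k_{\Pi})$ supported in a compact subannulus of $\td{A}_{r}$. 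Then two applications of Theorem~\ref{thm:gluing-annulus} -- exterior gluing of $(g_{in}, k_{in})$ to $(g_{\Pi}, k_{\Pi})$ at an inner interface and interior gluing of $(g_{\Pi}, k_{\Pi})$ to $(g_{out}, k_{out})$ at an outer interface -- can be combined into a finite-dimensional fixed-point problem for $\Pi$, solvable because each gluing step introduces only $O(\eps^{2})$ charge adjustments that fit within the tunable window $\mu_{o} \Delta \bfE$ of the family.

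\textbf{Construction of the designer family.} This is the crux of the proof. A compactly supported perturbation of $(\dlt, 0)$ carries zero \emph{linearized} charges, so the charges $\Pi$ of $(g_{\Pi}, k_{\Pi})$ must arise entirely from the \emph{quadratic} nonlinearity of the constraint map. Following the outline in the introduction, I would build $(g_{\Pi}, k_{\Pi})$ as a superposition of several ``bumps'' placed in disjoint cones inside $\td{A}_{r}$, assembled using the conic-type solution operators of \cite{MaoTao}. For the energy component $\bfE(\Pi) > 0$, take a time-symmetric, essentially radial bump and exploit a Bartnik-type \cite{Bar} computation showing that $\int R[g]$ on an almost flat $g$ is, up to controllable remainders, a positive definite quadratic form in $\nabla(g - \dlt)$; the positivity hypothesis \eqref{eq:obs-free-unit:EP} and the cap \eqref{eq:obs-free-unit:gmm} on the effective boost are exactly what is needed for this step to remain within reach. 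For the momentum $\bfP(\Pi)$, apply a Chru\'{s}ciel-type \cite{Chr1} \emph{localized boost}: briefly evolve the time-symmetric bump via the Choquet-Bruhat development \cite{ChBr}, Lorentz-transform the resulting spacetime, and extract a new spacelike slice whose $\bfQ$-charges on $A_{r}$ are the boosted image of the original mass. The center of mass $\bfC(\Pi)$ and angular momentum $\bfJ(\Pi)$ are then introduced by translating individual bumps away from the origin and by attaching Kerr-style antisymmetric components to the extrinsic curvature of individual bumps, respectively. The admissibility bound $\nrm{(g_{\Pi} - \dlt, k_{\Pi})}_{H^{s} \times H^{s-1}} \aleq \sqrt{|\Pi|}$ is consistent with the nonlinear mass scaling $\bfE \sim |\nabla(g - \dlt)|^{2}$.

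\textbf{Assembly and scaling bookkeeping.} With the designer family in hand, setting $\eps := \sqrt{\Delta \bfE}$ places all data at the common scale $\eps$ by \eqref{eq:obs-free-unit:eps}--\eqref{eq:obs-free-unit:data}, so Theorem~\ref{thm:gluing-annulus} applies (after the trivial rescaling to unit annuli) at each of the two interfaces provided $\eps_{o}$ is small enough. The two gluing steps each require charge matching up to $M_{c} \eps^{2}$, which is exactly of the same order as $\mu_{o} \Delta \bfE$ and $|\Delta \bfC| + |\Delta \bfJ|$; a contraction argument in the $10$-parameter $\Pi$ (with Lipschitz dependence guaranteed by Theorem~\ref{thm:gluing-annulus} and by the construction of the designer family) then closes the loop. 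Persistence of regularity and the desired bound \eqref{eq:obs-free-conc} follow from those of Theorem~\ref{thm:gluing-annulus} combined with the size estimate for $(g_{\Pi}, k_{\Pi})$.

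\textbf{Main obstacle.} The hardest step is precisely the nonlinear realization of all ten $\bfQ$-charges by compactly supported almost flat data, in a Lipschitz controlled family whose size is sharply $\sqrt{|\Pi|}$ rather than $|\Pi|$. One must verify quantitatively that the Bartnik-type positivity survives the successive addition of the boost, translation, and rotation layers, and that cross-interactions between bumps in different cones do not produce uncontrollable charges. The positivity \eqref{eq:obs-free-unit:EP} and the boost bound \eqref{eq:obs-free-unit:gmm} are structurally necessary rather than technical: nonlinear bumps only produce future-directed timelike energy-momentum, and the localized boost must remain perturbative for the slice extraction to stay inside the regime of Theorem~\ref{thm:gluing-annulus}. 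This is what allows the entire argument to proceed without invoking null/characteristic gluing, as in the original approach of \cite{CziRod}.
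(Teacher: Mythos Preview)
Your proposal correctly identifies the key nonlinear ingredients --- multi-bump data built via conic solution operators, a Bartnik-type computation for the energy, and a localized boost for the momentum --- but the assembly you describe differs from the paper's and, taken literally, does not close. You build a designer family $(g_\Pi, k_\Pi)$ compactly supported in an intermediate annulus and then apply Theorem~\ref{thm:gluing-annulus} \emph{twice}, at an inner and an outer interface, with a fixed point in $\Pi$. The difficulty is that a perturbation of $(\dlt, 0)$ supported in a compact subannulus is identically trivial at the inner interface regardless of $\Pi$; hence the family, restricted there, is not $\calQ$-admissible in the sense of Definition~\ref{def:adm-extr} (its charge map is constantly zero rather than the identity), and Theorem~\ref{thm:gluing-annulus} cannot be invoked to match the generically nonzero $Q_{in} = \bfQ[(g_{in}, k_{in}); A_1]$, whose size $\sim \sqrt{\mu_o \Dlt \bfE}$ exceeds the $O(\eps^2)$ window any single gluing can absorb.

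The paper instead uses a \emph{single} application of Theorem~\ref{thm:gluing-annulus}, at the outer annulus $A_{16}$. The crucial device is that both the six-bump configuration (Proposition~\ref{prop:bump-six}) and a conic outward extension $(\td{g}_{in}, \td{k}_{in})$ of $(g_{in}, k_{in})$ (Lemma~\ref{lem:ext-outgoing}) are built with \emph{non-compact} support in \emph{disjoint} unbounded cones --- the bumps in $C^{(6)}_\tht \cap \set{\abs{x} > 8}$, the extension in $B_4 \cup C_\tht(\bfomg_0)$ for a separate direction $\bfomg_0$. Their superposition then solves \eqref{eq:constraint} exactly and already equals $(g_{in}, k_{in})$ on $A_1$, eliminating the inner gluing entirely while retaining the tunable charge $\approx Q_{in} + \Dlt Q'$ at $A_{16}$. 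Two smaller discrepancies: your boost (evolve nonlinearly, Lorentz-transform, slice) is the naive approach the paper explicitly avoids because support control is lost for $\abs{\ell}$ near $1$; the paper boosts the \emph{linearized} solution first and upgrades afterward via the conic operator (see~\S\ref{subsec:obs-free-overview}). And the angular momentum in the paper comes from the classical $\xi \times \ell$ mechanism (position times velocity of individual bumps), not from Kerr-type antisymmetric pieces of $k$.
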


\begin{remark} [Sharpness of positivity, regularity and size assumptions] \label{rem:obs-free-unit}
As observed in \cite{CziRod}, the positivity requirement $\Dlt \bfE > \abs{\Dlt \bfP}$ is a key necessary condition for the validity of obstruction-free gluing in general (see also Remark~\ref{rem:obs-free-af} below). In Theorem~\ref{thm:obs-free-unit}, $\abs{\Dlt \bfP}$ is allowed to be arbitrarily close to $\Dlt \bfE$ (i.e., $\Gmm$ is arbitrarily large in view of \eqref{eq:obs-free-unit:gmm}) provided that the bounds \eqref{eq:obs-free-unit:eps}--\eqref{eq:obs-free-unit:data} hold with sufficiently small $\eps_{o}$ and $\mu_{0}$ (depending on $\Gmm$).

As before, the regularity requirement $s > \frac{3}{2}$ is sharp (up to endpoint) in relation to the scaling critical exponent. In view of the conservation law for $\bfE$ (see Lemma~\ref{lem:charge} below), for a solution $(g, k) \in H^{s} \times H^{s-1} (B_{64 \Gmm} \setminus \br{B_{1}})$ ($s > \frac{3}{2}$) satisfying \eqref{eq:constraint}, \eqref{eq:Dlt-Q} and \eqref{eq:obs-free-gluing}, it is necessary that
\begin{equation*}
	\Dlt \bfE \aleq_{s, \Gmm} \nrm{(g - \dlt, k)}_{H^{s} \times H^{s-1}(B_{64 \Gmm} \setminus \br{B_{1}})}^{2}.
\end{equation*}
Thus \eqref{eq:obs-free-conc} is sharp up to a constant, and hence so are the assumptions \eqref{eq:obs-free-unit:CJ} and \eqref{eq:obs-free-unit:data}. Finally, we remark that the choice of the radii $R_{in} = 1$ and $R_{out} = 32$ is arbitrary, and the result may be easily extended to any other pair of concentric annuli whose closures are disjoint.
\end{remark}

\begin{remark}
By a minor modification of the proof of Theorem~\ref{thm:obs-free-unit}, the following statements may also be established (where the domains are omitted for simplicity):
\begin{itemize}
\item (Persistence of regularity) If $(g_{in}, k_{in}, g_{out}, k_{out}) \in (H^{s+m} \times H^{s+m-1}) \times (H^{s+m} \times H^{s+m-1})$ for $m \in \bbN$, then $(g, k) \in H^{s+m} \times H^{s+m-1}$,

\item (Lipschitz continuity) The correspondence $(g_{in}, k_{in}, g_{out}, k_{out}) \mapsto (g, k)$ in Theorem~\ref{thm:obs-free-unit} may be put together to define a locally Lipschitz map from the subset of $(H^{s} \times H^{s-1}) \times (H^{s} \times H^{s-1})$ restricted by \eqref{eq:obs-free-unit:EP}--\eqref{eq:obs-free-unit:data} into $H^{s} \times H^{s-1}$.
\end{itemize}
\end{remark}

The corresponding result for asymptotically flat initial data sets has a more elegant hypothesis, at the expense of performing the gluing procedure in an annulus sufficiently afar (as in Theorem~\ref{thm:corvino-schoen}).

\begin{theorem} [Obstruction-free gluing for asymptotically flat data] \label{thm:obs-free-af}
Let $s > \frac{3}{2}$ and $\alp > \frac{1}{2}$. Let $(g_{in}, k_{in})$, $(g_{out}, k_{out}) \in H_{loc}^{s} \times H_{loc}^{s-1} (B_{R_{0}}^{c})$ be $\alp$-asymptotically flat pairs solving \eqref{eq:constraint}. Define, for those components that are well-defined,
\begin{equation*}
	\Dlt Q := \bfQ^{ADM}[(g_{out}, k_{out})] - \bfQ^{ADM}[(g_{in}, k_{in})],
\end{equation*}
and assume that
\begin{equation*}
	\Dlt \bfE  > \abs{\Dlt \bfP}.
\end{equation*}
Then there exists an asymptotically flat pair $(g, k) \in H_{loc}^{s} \times H_{loc}^{s-1}(B^{c}_{R_{0}})$ solving \eqref{eq:constraint} and $r \geq R_{0}$ such that $(g, k) = (g_{in}, k_{in})$ on $B_{2 r}$ and $(g, k) = (g_{out}, k_{out})$ on $B_{32 r}^{c}$.
\end{theorem}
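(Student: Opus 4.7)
The plan is to reduce Theorem~\ref{thm:obs-free-af} to Theorem~\ref{thm:obs-free-unit} via the invariant scaling transformation~\eqref{eq:scaling}. For $r \geq R_{0}$ (to be taken large), let $(g_{\bullet}^{(r)}, k_{\bullet}^{(r)})$ denote the rescaled data, which live on the unit-scale annuli $A_{1}$ and $A_{32}$ corresponding to the original $A_{r}$ and $A_{32r}$, and still solve \eqref{eq:constraint} by scaling invariance. The $\alpha$-asymptotic flatness (Definition~\ref{def:af}) gives $\nrm{(g_{\bullet}^{(r)} - \dlt, k_{\bullet}^{(r)})}_{H^{s} \times H^{s-1}} \aleq D_{0} r^{-\alpha}$, and the charges transform as $\bfE^{(r)}, \bfP^{(r)} = r^{-1}\bfE, \bfP$ and $\bfC^{(r)}, \bfJ^{(r)} = r^{-2}\bfC, \bfJ$. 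By Lemma~\ref{lem:af}, the annular $\bfE, \bfP$ at scale $r$ converge to their ADM values when $\alpha > 1/2$, so $\Dlt\bfE^{(r)}, \Dlt\bfP^{(r)} \sim r^{-1}(\Dlt\bfE^{ADM}, \Dlt\bfP^{ADM})$. This preserves the strict positivity \eqref{eq:obs-free-unit:EP} and lets me fix $\Gmm$ slightly larger than $\Dlt\bfE^{ADM}/\sqrt{(\Dlt\bfE^{ADM})^{2} - \abs{\Dlt\bfP^{ADM}}^{2}}$ for \eqref{eq:obs-free-unit:gmm}, thereby determining the constants $\eps_{o}, \mu_{o}$.

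I then verify the remaining smallness conditions at rescaled scale. Condition \eqref{eq:obs-free-unit:eps} holds since $\Dlt\bfE^{(r)} \sim r^{-1} \to 0$. The Sobolev bound \eqref{eq:obs-free-unit:data} becomes $r^{-2\alpha} \aleq \mu_{o} r^{-1} \Dlt\bfE^{ADM}$, which is valid for $r$ large precisely because $\alpha > 1/2$; this is exactly where the sharp spatial decay hypothesis is used. The hard part, and the main obstacle, is \eqref{eq:obs-free-unit:CJ}: showing $\abs{\Dlt\bfC^{(r)}} + \abs{\Dlt\bfJ^{(r)}} < \mu_{o} \Dlt\bfE^{(r)}$. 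For $\alpha > 1$, pointwise Sobolev estimates on the integrands give $\abs{\Dlt\bfC^{(r)}}, \abs{\Dlt\bfJ^{(r)}} \aleq r^{-\alpha} \ll r^{-1} \aeq \Dlt\bfE^{(r)}$, and the condition follows. For $\alpha \in (1/2, 1]$ the ADM quantities $\bfC^{ADM}, \bfJ^{ADM}$ need not be defined and the naive bound is too weak, so I would insert a preliminary application of Theorem~\ref{thm:gluing-annulus} on an intermediate transition annulus between $A_{1}$ and $A_{32}$: glue $(g_{in}^{(r)}, k_{in}^{(r)})$ to an auxiliary admissible family member (Kerr slices, or one of the multi-bump constructions from the proof of Theorem~\ref{thm:obs-free-unit}) whose ten free charges are chosen to match $\bfQ[(g_{out}^{(r)}, k_{out}^{(r)}); A_{32}]$ in the $\bfC, \bfJ$ components. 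After this preliminary step the residual $\Dlt\bfC, \Dlt\bfJ$ relevant for \eqref{eq:obs-free-unit:CJ} is small compared to $\Dlt\bfE^{(r)}$, and one applies the unit-scale theorem to the shifted annuli using the remark that Theorem~\ref{thm:obs-free-unit} extends to any pair of concentric annuli with disjoint closures.

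Once all hypotheses are verified, Theorem~\ref{thm:obs-free-unit} produces the glued $(g^{(r)}, k^{(r)})$ at rescaled scale. Un-rescaling yields $(g, k) \in H^{s} \times H^{s-1}(B_{64 r} \setminus \overline{B_{r}})$; I then extend by $(g_{in}, k_{in})$ on $B_{r} \cap B_{R_{0}}^{c}$ and by $(g_{out}, k_{out})$ on $B_{64r}^{c}$. By construction the extensions agree with the glued data on the overlap annuli $A_{r}$ and $A_{32r}$, so the resulting $(g, k) \in H^{s}_{loc} \times H^{s-1}_{loc}(B_{R_{0}}^{c})$ solves \eqref{eq:constraint}, is asymptotically flat (since it equals $(g_{out}, k_{out})$ near infinity), and matches $(g_{in}, k_{in})$ on $B_{2r}$ and $(g_{out}, k_{out})$ on $B_{32r}^{c}$, as required. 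The hardest technical step is the preliminary charge-adjustment in the low-decay regime $\alpha \in (1/2, 1]$, where ADM $\bfC, \bfJ$ may fail to exist but the nonlinear admissible-family machinery of Theorem~\ref{thm:gluing-annulus} nevertheless delivers enough flexibility to align the annular $\bfC, \bfJ$ before invoking Theorem~\ref{thm:obs-free-unit}.
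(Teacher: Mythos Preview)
Your overall strategy---rescale and reduce to Theorem~\ref{thm:obs-free-unit}---is correct and matches the paper. However, you have overcomplicated the verification of \eqref{eq:obs-free-unit:CJ} in the regime $\alp \in (\tfrac{1}{2}, 1]$; the paper handles all $\alp > \tfrac{1}{2}$ uniformly with no preliminary gluing step.

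The observation you missed is that, for \emph{every} $\alp > \tfrac{1}{2}$, the rescaled averaged charges obey
\[
(\bfC, \bfJ)[(g_\Box^{(r)}, k_\Box^{(r)}); A_{R_\Box}] = \calO(r^{-2\alp}),
\]
which is exactly Lemma~\ref{lem:af}(2), invoked just as in \eqref{eq:extr-r}. Your naive surface-integral estimate gives only $\calO(r^{-\alp})$, but the sharper bound follows from the conservation law (Lemma~\ref{lem:charge}): because $(g_\Box, k_\Box)$ solves \eqref{eq:constraint}, the difference of $(\bfC, \bfJ)$ between two annuli is an integral of the \emph{nonlinearity} $\vec{N}(h, \pi)$ against $(x_k, \bfY_k)$, and the nonlinearity is quadratic in the data, hence of size $\calO(\rho^{-2\alp-2})$ at radius $\rho$. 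Summing dyadically from a fixed $r_0$ out to $r$ and multiplying by the scaling factor $r^{-2}$ yields $\calO(r^{-2\alp})$. Since $2\alp > 1$, this is $\ll r^{-1} \aeq \Dlt\bfE^{(r)}$, and \eqref{eq:obs-free-unit:CJ} follows directly for $r$ large.

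Thus the ``hardest technical step'' you identified dissolves, and the proof is the short rescaling argument in Section~\ref{subsec:obs-free-af}. Your proposed preliminary application of Theorem~\ref{thm:gluing-annulus} is both unnecessary and, as stated, not obviously well-posed: that theorem \emph{selects} $Q$ within $\calO(\eps^{2})$ of $\mathring{Q}$ rather than letting you prescribe chosen components of $Q$, so it is not clear how you would force the $(\bfC, \bfJ)$ of the intermediate data to match those of $(g_{out}^{(r)}, k_{out}^{(r)})$.
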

Theorems~\ref{thm:obs-free-unit} and~\ref{thm:obs-free-af} are proved in Section~\ref{sec:obs-free}.
\begin{remark}\label{rem:obs-free-af}
Taking $(g_{in}, k_{in}) = (\dlt, 0)$ shows that the condition $\Dlt \bfE > \abs{\Dlt \bfP}$ is sharp in view of the positive mass theorem \cite{SchYau, Wit}. Note that this case recovers the interior gluing of Bieri--Chru\'{s}ciel \cite{BieChr}.

The requirements on the regularity and spatial decay exponents $s > \frac{3}{2}$ and $\alp > \frac{1}{2}$ are sharp (up to endpoints) as discussed in Section~\ref{subsec:obs-gluing}; in particular, $\Dlt \bfE$ and $\Dlt \bfP$ are well-defined. On the contrary, $\bfC^{ADM}, \bfJ^{ADM}$ need not be well-defined for either $(g_{in}, k_{in})$ or $(g_{out}, k_{out})$ to apply Theorem~\ref{thm:obs-free-af}.
\end{remark}

\subsection{Structure of the paper}
The remainder of the paper is structured as follows. {\bf Section~\ref{sec:div}} collects some preliminary facts concerning the linearization of \eqref{eq:constraint} and \eqref{eq:eve} around the flat case. More specifically, after rewriting \eqref{eq:constraint} as a quasilinear perturbation of divergence-type equations on $(\bbR^{3}, \dlt)$ (i.e., the linearization of \eqref{eq:constraint} around the flat case) in {\bf Section~\ref{subsec:linearize}}, we write down Bogovskii- and conic-type operators for these equations (see also \cite{IMOT}) in {\bf Sections~\ref{subsec:bogovskii}} and {\bf \ref{subsec:conic}}, respectively. We discuss the conservation laws for the linearization of \eqref{eq:constraint} (which involve $\bfQ$) in {\bf Section~\ref{subsec:charge}}, and for the linearization of \eqref{eq:eve} (which will be used in the proof of obstruction free gluing) in {\bf Section~\ref{subsec:charge-spt}}. Then in {\bf Section~\ref{sec:gluing}}, we prove Theorem~\ref{thm:gluing-annulus}. In {\bf Section~\ref{sec:af}}, we collect some basic facts about asymptotic flatness and establish Theorem~\ref{thm:corvino-schoen}, with some technical details concerning Kerr initial data sets deferred to {\bf Appendix~\ref{sec:kerr-extr}}.
Finally, in {\bf Section~\ref{sec:obs-free}}, we prove Theorems~\ref{thm:obs-free-unit} and~\ref{thm:obs-free-af}; an outline of the proof is provided in {\bf Section~\ref{subsec:obs-free-overview}}.

\subsection*{Acknowledgements}
The authors would like to thank Igor Rodnianski for enlightening discussions regarding obstruction-free gluing.
Y.~Mao was partially supported by a National Science Foundation CAREER Grant under NSF-DMS-1945615. S.-J.~Oh was partially supported by a Sloan Research Fellowship and a National Science Foundation CAREER Grant under NSF-DMS-1945615. Z. Tao was partially supported by the NSF grant DMS-1952939 and by the Simons Targeted Grant Award No. 896630.

\section{Linearization around the flat case} \label{sec:div}
\subsection{Schematic notation and reformulation of \eqref{eq:constraint}} \label{subsec:linearize}
We begin by reformulating \eqref{eq:constraint} in a sufficiently flat region. We introduce new variables $(h, \pi)$, defined as follows:
\begin{align}
(h_{ij}, \pi_{ij}) &= (g_{ij} - \dlt_{ij} - \dlt_{ij} \tr_{\dlt} (g - \dlt), k_{ij} - \dlt_{ij} \tr_{\dlt} k) .\label{eq:hpi-def}
\end{align}
Observe that the transformation is obviously invertible with the formulae
\begin{align}
(g_{ij}, k_{ij}) &= (\dlt_{ij} + h_{ij} - \frac{1}{2} \dlt_{ij} \tr_{\dlt} h, \pi_{ij} - \frac{1}{2} \dlt_{ij} \tr_{\dlt} \pi). \label{eq:hpi2gk}
\end{align}
With respect to the new variables, we write the left-hand sides of \eqref{eq:constraint} schematically as
\begin{align}
	R[g] &= \rd_{i} \rd_{j} h^{ij} - M_{h}^{(2)}(h, \rd^{2} h) - M_{h}^{(1)}(\rd h, \rd h), \\
	(\tr_{g} k)^{2} - \abs{k}_{g}^{2} &= - M_{h}^{(0)}(\pi, \pi), \\
	g^{jj'} (g^{ii'}D_{g; i} k_{i'j'} - \rd_{j'} \tr_{g} k )&= \rd_{i} \pi^{ij} - N_{h}^{(1) j}(h, \rd \pi) - N_{h}^{(0) j}(\rd h, \pi).
\end{align}
The schematic notation we use is defined as follows:
\begin{definition}[Schematic notation] \label{def:sch-notation}
Each of $M^{(\ast)}_{h}(u_{1}, \ldots, u_{k})$ and $N^{(\ast)}_{h}(u_{1}, \ldots, u_{k})$ is a linear combination of contraction of $u_{1}, \ldots, u_{k}$ with a smooth tensor field $A$ (of the appropriate rank) on $\bbR^{3}$ that depends only on $h$. Moreover, we assume that  $\abs{\rd_{h}^{(n)} A (h)} \aleq_{M, n} 1$ as long as $\abs{h} \leq M$ for all $n \in \bbN \cup \set{0}$.
\end{definition}

In conclusion, \eqref{eq:constraint} takes the form
\begin{align}
	\rd_{i} \rd_{j} h^{ij} &= M_{h}^{(2)}(h, \rd^{2} h) + M_{h}^{(1)}(\rd h, \rd h) + M_{h}^{(0)}(\pi, \pi), \label{eq:constraint-h} \\
	\rd_{i} \pi^{ij} &= N_{h}^{(1) j}(h, \rd \pi) + N_{h}^{(0) j}(\rd h, \pi), \label{eq:constraint-pi}
\end{align}
where the indices are raised and lowered using the Euclidean metric $\dlt$. Introducing
\begin{equation} \label{eq:constraint-abbrev}
	\vec{P}(h, \pi) = (\rd_{i} \rd_{j} h^{ij}, \rd_{i} \pi^{ij}), \quad
	\vec{N}(h, \pi) = (M(h, \pi), N(h, \pi)) := (\hbox{RHS of }\eqref{eq:constraint-h}, \hbox{RHS of }\eqref{eq:constraint-pi}),
\end{equation}
we may abbreviate \eqref{eq:constraint-h}, \eqref{eq:constraint-pi} as $\vec{P}(h, \pi) = \vec{N}(h, \pi)$.

\subsection{Bogovskii-type operators} \label{subsec:bogovskii}
We now state our main tool for inverting the LHS of \eqref{eq:constraint-h}--\eqref{eq:constraint-pi} while preserving the annular support property.
\begin{lemma} \label{lem:bogovskii}
Let $\Omg = A_{1}$. There exists a linear operator $S$ defined for $f \in C_{c}^{\infty}(\Omg)$ and taking values in symmetric $2$-tensor fields (i.e., symmetric $3 \times 3$ matrix-valued functions) such that
\begin{enumerate}[label=$(S\arabic*)$]
\item $\supp S f \subseteq \Omg$ (recall $f \in C_{c}^{\infty}(\Omg)$);
\item $\rd_{i} \rd_{j} (S f)^{ij} = f$ if $\int_{\Omg} f \cdot (1, x_{1}, x_{2}, x_{3})^{\dagger} \, \ud x = 0$;
\item $\nrm{S f}_{H^{s'}} \aleq_{s'} \nrm{f}_{H^{s'-2}}$ for any $s' \in \bbR$;
\item $\nrm{[S, \rd_{j}] f}_{H^{s'}} \aleq_{s'} \nrm{f}_{H^{s'-2}}$ for any $s' \in \bbR$ and $j = 1, \ldots, d$.
\end{enumerate}
Moreover, there exists a linear operator $T$ defined for $\bff \in C_{c}^{\infty}(\Omg; \bbR^{3})$ and taking values in symmetric $2$-tensor fields such that
\begin{enumerate}[label=$(T\arabic*)$]
\item $\supp T \bff \subseteq \Omg$ (recall $\bff \in C_{c}^{\infty}(\Omg; \bbR^{3})$);
\item $\rd_{i} (T \bff)^{ij} = \bff^{j}$ if $\int_{\Omg} \bff \cdot  (\bfe_{1}, \bfe_{2}, \bfe_{3}, \bfY_{1}, \bfY_{2}, \bfY_{3})^{\dagger} \, \ud x = 0$;
\item $\nrm{T \bff}_{H^{s'}} \aleq_{s'} \nrm{\bff}_{H^{s'-1}}$ for any $s' \in \bbR$;
\item $\nrm{[T, \rd_{j}] \bff}_{H^{s'}} \aleq_{s'} \nrm{\bff}_{H^{s'-1}}$ for any $s' \in \bbR$ and $j = 1, \ldots, d$.
\end{enumerate}
\end{lemma}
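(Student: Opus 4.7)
The plan is to reduce both solution operators to the classical scalar Bogovskii operator on the annulus, treating the symmetry of $\pi$ and the second-order divergence in $S$ by iterated corrections together with a finite-dimensional adjustment that handles the cokernels.

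\emph{Scalar Bogovskii on the annulus.} First, I would construct (or cite) a scalar operator $B\colon\{f\in C_c^\infty(\Omega):\int f=0\}\to C_c^\infty(\Omega;\bbR^3)$ with $\mathrm{div}(Bf)=f$ and $\nrm{Bf}_{H^{s'+1}}\aleq_{s'}\nrm{f}_{H^{s'}}$, $\nrm{[B,\rd_j]f}_{H^{s'+1}}\aleq_{s'}\nrm{f}_{H^{s'}}$ for all $s'\in\bbR$ and support preservation built in. Since $\Omg=A_1$ is a bounded Lipschitz domain that decomposes as a finite union of star-shaped Lipschitz pieces, this follows from Bogovskii's explicit integral kernel on each piece, combined with a partition-of-unity gluing (Galdi, Acosta--Dur\'an); the kernel is Calder\'on--Zygmund of order $-1$, which gives the positive-regularity bounds, and the negative-regularity bounds follow by duality. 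The commutator estimate comes from moving $\rd_j$ onto the kernel and cutoffs, which produces operators of the same order.

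\emph{Construction of $T$.} Given $\bff\in C_c^\infty(\Omg;\bbR^3)$ satisfying $\int\bff\cdot\bfe_k\,\ud x=\int\bff\cdot\bfY_k\,\ud x=0$, I would apply $B$ componentwise to obtain $u^{ij}:=(B(\bff^j))^i$ with $\rd_i u^{ij}=\bff^j$. The symmetrization $\tau_0^{ij}:=\tfrac12(u^{ij}+u^{ji})$ is already supported in $\Omg$ and gives a residual $\bfr^j:=\bff^j-\rd_i\tau_0^{ij}=\tfrac12(\bff^j-\rd_i u^{ji})$. One checks by integration by parts that $\int\bfr\cdot\bfe_k\,\ud x=0$ automatically, and that $\int\bfr\cdot\bfY_k\,\ud x=0$ exactly thanks to the angular-momentum compatibility assumed on $\bff$ together with the antisymmetry of $\rd_i u^{ji}-\rd_i u^{ij}$. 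One then iterates the symmetrized-Bogovskii step on $\bfr$, producing a series $T\bff=\sum_n\tau_n$. The gain at each step comes from the commutator bound: the difference $\rd_i u^{ji}-\bff^j$ is a commutator of $B$ with a derivative, so after symmetrization the residual is controlled by $[B,\rd]$, and summing the resulting Neumann-type series in $H^{s'}\times H^{s'-1}$ yields $T\bff$ obeying $(T1)$--$(T4)$.

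\emph{Construction of $S$.} Given $f\in C_c^\infty(\Omg)$ with $\int f\,(1,x_1,x_2,x_3)\,\ud x=0$, set $w_0:=Bf$. The compatibility $\int x^k f\,\ud x=0$ together with $\int w_0^k\,\ud x=-\int x^k\,\mathrm{div}\,w_0\,\ud x=-\int x^k f\,\ud x=0$ shows that $w_0$ already satisfies the translational compatibility for $T$. To arrange the rotational compatibility, I would fix once and for all three divergence-free reference fields $\bfv_1,\bfv_2,\bfv_3\in C_c^\infty(\Omg;\bbR^3)$ with $\int\bfv_k\cdot\bfY_\ell\,\ud x=\dlt_{k\ell}$ (obtained by cutting off $\bfY_k$ and applying a Bogovskii correction to restore zero divergence), and set $w:=w_0-\sum_kc_k\bfv_k$ with $c_k:=\int w_0\cdot\bfY_k\,\ud x$. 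Then $w$ satisfies all six compatibility conditions for $T$ with $\mathrm{div}\,w=f$, and $Sf:=Tw$ is a symmetric tensor supported in $\Omg$ satisfying $\rd_i\rd_j(Sf)^{ij}=\rd_j w^j=f$ and the claimed Sobolev bounds.

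\emph{Main obstacle.} The technical crux is the convergence of the symmetrization iteration for $T$: the naive symmetrization produces a residual that a priori is of the same order as the original data, and the gain comes only through the commutator structure $[B,\rd_j]$. I expect that verifying the correct quantitative smallness of the residual at each iterate — ensuring the Neumann series converges in $H^{s'}$ uniformly in $s'$ while preserving both symmetry and support in $\Omg$ — is the main place where care is required, and is the natural role played by estimate $(S4)$.
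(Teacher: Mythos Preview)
Your reduction of $S$ to $T$ composed with the scalar Bogovskii operator is sound (modulo minor bookkeeping with the means of the $\bfv_k$), and your verification that the angular-momentum compatibility of the residual $\bfr$ follows from the symmetry of $\tau_0$ is correct. The genuine gap is in the convergence argument for the symmetrization iteration defining $T$. Write the residual as
\[
r^{j} = \tfrac{1}{2}\bigl(\bff^{j} - \rd_{i} (B\bff^{i})^{j}\bigr)
= \tfrac{1}{2}\bigl(\bff^{j} - B^{j}(\div\bff)\bigr) - \tfrac{1}{2}[\rd_{i}, B^{j}]\bff^{i}.
\]
Only the last piece is a commutator; the principal part $\tfrac{1}{2}(I - B\,\div)$ is an order-$0$ pseudodifferential projection (onto divergence-free fields) multiplied by $\tfrac{1}{2}$, not a lower-order remainder. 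Its pointwise symbol has spectral radius $\tfrac{1}{2}$, but since the projection is not orthogonal its $H^{s'}\!\to H^{s'}$ operator norm need not be below $1$, and the lower-order corrections accumulate through the recursion $R^{2}=\tfrac{1}{2}R + (\hbox{order }-1)$ without any smallness on the remainder. So the Neumann series has no reason to converge, and the commutator bound $(S4)$ is not what is doing the work here.

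The paper avoids this problem altogether: it writes down an explicit Bogovskii-type kernel for the \emph{symmetric} divergence (and a separate one for the double divergence) directly on any domain star-shaped with respect to a ball, verifies $(S1)$--$(S4)$ and $(T1)$--$(T4)$ for those kernels, and then covers $A_{1}$ by finitely many such star-shaped pieces with a partition-of-unity gluing that transports the cokernel obstructions into a common overlap. No iteration is involved. If you want to salvage your route, you would need an honest contraction estimate for $\tfrac{1}{2}(I - B\,\div)$ on $H^{s'}(\Omg)$, which is not available for a generic right inverse $B$; the explicit-kernel approach sidesteps this entirely.
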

Concerning the integral conditions in $(S2)$ (resp.~$(T2)$), observe that $1$, $x_{1}$, $x_{2}$, $x_{3}$ span the kernel of the formal $L^{2}$-adjoint of the double divergence operator $h \mapsto \rd_{i} \rd_{j} h^{ij}$ (resp.~$\bfe_{1}$, $\ldots$, $\bfY_{3}$ span the kernel of the formal $L^{2}$-adjoint of the symmetric divergence operator $\pi \mapsto \rd_{i} \pi^{ij}$).

The basic ingredient for the proof of Lemma~\ref{lem:bogovskii} is the following explicit operators in the case $\Omg$ is star-shaped with respect to a ball (see Lemma~\ref{lem:bogovskii-0} for the definition), which is analogous to the classical Bogovskii operator \cite{Bog} for the divergence operator $u \mapsto \rd_{j} u^{j}$.

\begin{lemma} [Bogovskii-type operators] \label{lem:bogovskii-0}
Let $\Omg$ be an open set in $\bbR^{3}$ that is star-shaped with respect to a ball $B \subset \Omg$, i.e., for every $x \in U$ and $y \in B$, the line segment $[\br{x, y}]$ is contained in $\Omg$. Fix $C^{\infty}_{c}(B)$ such that $\int \eta \, \ud x= 1$, and $\chi_{\Omg} \in C^{\infty}_{c}(\bbR^{3})$ such that $\chi_{\Omg} = 1$ on $\Omg$.

Then the operator defined for all $f \in C^{\infty}_{c}(\bbR^{3})$ by
\begin{equation} \label{eq:d-div-eq-bogovskii}
(S f)^{ij}(x) = \int \bfPsi_{\eta}^{ij}(x, y) f(y) \, \ud y,
\end{equation}
\begin{equation} \label{eq:d-div-eq-bogovskii-ker}
\bfPsi_{\eta}^{ij} (z+y, y)
= \left(\int_{\abs{z}}^{\infty} \eta\left( r \frac{z}{\abs{z}} + y\right) r^{2} \, \ud r \right) \frac{z^{i} z^{j}}{\abs{z}^{3}}.
\end{equation}
satisfies $(S1)$ (when $\supp f \subseteq \Omg$) and $(S2)$. Moreover, $f \mapsto S (\chi_{\Omg} f)$ is a classical pseudodifferential operator of order $-2$, which also implies $(S3)$ and $(S4)$ when $\supp f \subseteq \Omg$.

Moreover, the operator defined by
\begin{equation} \label{eq:symm-div-eq-bogovskii}
	(T \bff)^{ij} (x) = \int (\bfPsi_{\eta})^{ij}_{k}(x, y) \bff^{k}(y) \, \ud y,
\end{equation}
\begin{equation} \label{eq:symm-div-eq-bogovskii-ker}
\begin{aligned}
(\bfPsi_{\eta})^{ij}_{k}(z+y, y)
&= - \frac{1}{2} \left( \int_{\abs{z}}^{\infty} \eta\left( r \frac{z}{\abs{z}}  + y \right)  r^{2} \, \ud r \right)\abs{z}^{-3} \left( z^{i} \bfdlt^{j}_{k} + \bfdlt^{i}_{k} z^{j} \right) \\
&\peq + \frac{1}{2} \rd_{z^{m}}\left( \left( \int_{\abs{z}}^{\infty} \eta\left( r \frac{z}{\abs{z}}  + y \right)  r^{2} \, \ud r \right)\abs{z}^{-3} \left( z^{i} \bfdlt^{j}_{k} z^{m} + \bfdlt^{i}_{k} z^{j} z^{m} \right)  \right)  \\
&\peq - \rd_{z^{k}} \left( \left( \int_{\abs{z}}^{\infty} \eta\left(r \frac{z}{\abs{z}} + y \right) r^{2} \, \ud r \right)\abs{z}^{-3}  z^{i} z^{j} \right).
\end{aligned}
\end{equation}
satisfies $(T1)$ (when $\supp \bff \subseteq \Omg$) and $(T2)$. Moreover, $\bff \mapsto T (\chi_{\Omg} \bff)$ is a classical pseudodifferential operator of order $-1$, which also implies $(T3)$ and $(T4)$ when $\supp \bff \subseteq \Omg$.
\end{lemma}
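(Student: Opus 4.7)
\textbf{Proof plan for Lemma~\ref{lem:bogovskii-0}.}

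\emph{Strategy.} The formulas generalize Bogovskii's classical anti-divergence: to invert a divergence-type operator on a domain star-shaped with respect to a ball $B \sbeq \Omg$, one distributes each $\dlt_{y}$ along the rays through $y$ that meet $B$, weighted by $\eta$ and a radial factor chosen so that the resulting kernel has the correct order. The kernel $\Psi_\eta^{ij}$ of $S$ carries the extra factor $z^i z^j/\abs{z}^3$ so that $\rd_i \rd_j$ acts as the identity, while the kernel $(\bfPsi_\eta)^{ij}_k$ of $T$ contains additional divergence-type correction terms designed to make $T\bff$ symmetric in $(i,j)$.

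\emph{Support $(S1), (T1)$.} If $x \notin \Omg$ and some $y \in \supp f \sbeq \Omg$ gave a nonzero contribution to the integral defining $(Sf)(x)$, there would exist $r \geq \abs{x-y}$ with $w := y + r\tfrac{x-y}{\abs{x-y}} \in \supp \eta \sbeq B$; since $\abs{x-y} \leq r$, the point $x$ lies on the segment $[y, w]$, which, by star-shapedness of $\Omg$ with respect to $B$, is contained in $\Omg$ --- contradicting $x \notin \Omg$. The same reasoning handles $T$, since every term in $(\bfPsi_\eta)^{ij}_k$ is built from the same ray integral.

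\emph{Divergence identities $(S2), (T2)$.} After rescaling $r = t\abs{z}$, the $S$-kernel becomes $\Psi_\eta^{ij}(x, y) = K(x-y, y)(x-y)^i(x-y)^j$ with $K(z, y) = \int_1^\infty \eta(y + tz)\, t^2\,\ud t$. The plan is to compute $\rd_i \rd_j [K(z, y) z^i z^j]$ as a distribution in $z$, for fixed $y$. The key algebraic identity is
\[
z^j \rd_{z^j} K(z, y) = -\eta(y+z) - 3 K(z, y),
\]
obtained by integrating by parts in $t$ via $z^j (\rd_j \eta)(y+tz) = \tfrac{\ud}{\ud t}\eta(y+tz)$. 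Applying this twice shows that on the smooth region $z \neq 0$,
\[
\rd_i \rd_j [K(z, y) z^i z^j] = -4\,\eta(y+z) - z^i (\rd_i \eta)(y+z),
\]
while the $\abs{z}^{-1}$ singularity at $z = 0$ (a consequence of the asymptotic $K(z, y) \sim M_2(y, \hat z) \abs{z}^{-3}$, where $M_2(y, \omg) := \int_0^\infty \eta(y + s\omg) s^2\,\ud s$) forces an additional $\dlt(z)$ contribution. \textbf{The technical heart of the argument is extracting this delta with the correct coefficient}: using $\omg^i \omg^j \rd_i \rd_j \phi = \rd_r^2 \phi$ in polar coordinates together with $\int_{\bbS^2} M_2(y, \omg)\,\ud \omg = \int_{\bbR^3} \eta\,\ud w = 1$, one finds the coefficient is exactly $1$. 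Testing against $f(y)\,\ud y$ then yields
\[
\rd_i \rd_j (Sf)^{ij}(x) = f(x) - 4 \eta(x) \int f\,\ud y - (\rd_i \eta)(x) \int (x^i - y^i) f(y)\,\ud y,
\]
and the four moment conditions in $(S2)$ annihilate the correction exactly. For $(T2)$, the three-term structure of $(\bfPsi_\eta)^{ij}_k$ (forced by symmetry in $(i,j)$) lengthens the integration-by-parts bookkeeping, but the same scheme produces a smooth remainder that recombines into the moments of $\bff$ against the Killing fields $\bfe_1, \ldots, \bfY_3$, which vanish by hypothesis.

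\emph{Pseudodifferential structure and estimates $(S3)$--$(S4)$, $(T3)$--$(T4)$.} After multiplication by $\chi_\Omg$, the $S$-kernel is compactly supported in $y$, smooth away from the diagonal $\{x = y\}$, and near the diagonal equals a smooth function of $(y, \hat z)$ times $z^i z^j/\abs{z}^3$ (homogeneous of degree $-1$ in $z$, with smooth angular coefficient). This identifies $f \mapsto S(\chi_\Omg f)$ as a classical pseudodifferential operator of order $-2$ on $\bbR^3$; the analogous analysis gives order $-1$ for $T$. The bounds $(S3)$, $(S4)$, $(T3)$, $(T4)$ then follow from standard Sobolev mapping and commutator estimates for classical pseudodifferential operators on $\bbR^3$.
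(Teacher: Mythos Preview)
Your proposal is correct and follows essentially the same route as the paper. The only organizational difference is that the paper bootstraps from the classical (order $-1$) Bogovskii identity $\rd_{z^{i}}\big[\tfrac{z^{i}}{|z|^{3}}\int_{|z|}^{\infty}\eta(r\tfrac{z}{|z|}+y)\,r^{2}\,\ud r\big]=\dlt_{0}(z)-\eta(z+y)$ and then applies one further divergence, whereas you carry out both divergences directly via the rescaled kernel $K$ and extract the $\dlt_{0}$ by the polar-coordinate moment computation; both computations terminate in the identical formula $\rd_{i}\rd_{j}\bfPsi_{\eta}^{ij}=\dlt_{0}(z)-4\eta(z+y)-z^{i}(\rd_{i}\eta)(z+y)$, and your treatment of $(S1)$, $(T2)$, and the pseudodifferential structure matches the paper's (which defers $(T2)$ to \cite{IseOh} and the PDO verification to \cite{OhTat}).
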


\begin{proof}
That $(S1)$ and $(T1)$ hold may be verified by computation using the explicit formulae. To prove $(S1)$, we first recall the computation $\rd_{z^{i}} \left[ \frac{z^{i}}{\abs{z}^{3}} \int_{\abs{z}}^{\infty} \eta ( r \frac{z}{\abs{z}} + y) r^{2} \, \ud r \right]  = \dlt_{0}(z) - \eta(z+y)$,
where the expression inside the parentheses is the classical Bogovskii operator for the divergence operator \cite{Bog}. We may then compute
\begin{align*}
	\rd_{z^{i}} \rd_{z^{j}} \bfPsi_{\eta}(z+y, y)
	&= \rd_{z^{i}} \left[ \rd_{z^{j}} \left( \frac{z^{i} z^{j}}{\abs{z}^{3}}\right) \int_{\abs{z}}^{\infty} \eta \left( r \frac{z}{\abs{z}} + y\right) r^{2} \, \ud r
	+ \frac{z^{i} z^{j}}{\abs{z}^{3}} \rd_{z^{j}} \int_{\abs{z}}^{\infty} \eta\left( r \frac{z}{\abs{z}} + y\right) r^{2} \, \ud r  \right]\\
	&= \rd_{z^{i}} \left[ \frac{z^{i}}{\abs{z}^{3}} \int_{\abs{z}}^{\infty} \eta \left( r \frac{z}{\abs{z}} + y\right) r^{2} \, \ud r \right]
	- \rd_{z^{i}} \left[ z^{i} \eta\left( z + y\right)  \right]\\
	&= \dlt_{0}(z) - 4 \eta(z+y) - z^{i} (\rd_{i} \eta)(z + y),
\end{align*}
where we used the preceding identity on the last line. From this computation, $(S2)$ follows.
On the other hand, a proof of $(T2)$ can be found in \cite{IseOh}, where the same operator was introduced and used (alternatively, $(T2)$ can also be verified by computation as above). Next, following an argument similar to \cite{OhTat}, it may be verified that $f \mapsto S (\chi_{\Omg} f)$ and $\bff \mapsto T (\chi_{\Omg} \bff)$ are pseudodifferential operators of order $-2$ and $-1$, respectively. Then $(S3)$, $(S4)$, $(T3)$ and $(T4)$ follow.  \qedhere
\end{proof}

\begin{proof}[Proof of Lemma~\ref{lem:bogovskii}]
We shall call $S$ (resp.~$T$) satisfying $(S1)$--$(S4)$ (resp.~$(T1)$--$(T4)$) a \emph{Bogovskii-type} operator on $\Omg$. Lemma~\ref{lem:bogovskii-0} says that such operators exist for any open set star-shaped with respect to a ball. We claim that a Bogovskii-type operator can be defined on the union of any two open sets on each of which such an operator is defined. By a simple recursion argument, this observation would allow us to define a Bogovskii-type operator on any finite union of such sets. Since $A_{1}$ is clearly a finite union of open star-shaped sets with respect to balls, Lemma~\ref{lem:bogovskii} would then follow.

It remains to verify the claim. We focus on the case of $S$, the case of $T$ being similar. Let $U_{1}$ and $U_{2}$ be open sets on which Bogovskii-type operators $S_{1}$ and $S_{2}$, respectively, exist. When $U_{1} \cap U_{2} \neq \0$, let $\set{\chi_{1}, \chi_{2}}$ be a smooth partition of unity on $U$ subordinate to $\set{U_{1}, U_{2}}$ and fix $\eta \in C^{\infty}_{c}(U_{1} \cap U_{2})$. Write $g_{0} = 1$, $g_{j} = x_{j}$ ($j=1, 2, 3$) and define $G_{\mu \nu} = \int g_{\mu} g_{\nu} \eta^{2} \, \ud x$. If $\eta \neq 0$, note that $G_{\mu \nu}$ is positive-definite and in particular invertible. Now, we define $\tht^{\mu} = (G^{-1})^{\mu \nu} g_{\nu} \eta^{2}$, so that $\int \tht^{\mu} g_{\mu'} \, \ud x = \dlt^{\mu}_{\mu'}$ and $\tht^{\mu} \in C^{\infty}_{c}(U_{1} \cap U_{2})$.
We decompose $f$ into
\begin{align*}
	f &= f_{1} + f_{2} + \left(\int f \, \ud x\right) \, \tht^{0} -\sum_{j=1}^{3} \left(\int f x_{j} \, \ud x\right) \tht^{j},
\end{align*}
where $f_{k} = f  \chi_{k} - \left(\int f \chi_{k} \, \ud x\right) \tht^{0} -\sum_{j=1}^{3} \left(\int f \chi_{k} x_{j} \, \ud x\right) \tht^{j}$. Then $f = f_{1} + f_{2}$ if and only if $\int f (1, x_{1}, x_{2}, x_{3})^{\dagger} \, \ud x = 0$. Moreover, $\supp f_{k} \subseteq U_{k}$ with $\int f_{k} (1, x_{1}, x_{2}, x_{3})^{\dagger} \, \ud x = 0$. We claim that the following defines a desired Bogovskii-type operator on $U$:
\begin{equation*}
	S f := S_{1} f_{1} + S_{2} f_{2}.
\end{equation*}
That $(S1)$--$(S3)$ hold is straightforward. To check $(S4)$, note that
\begin{equation} \label{eq:bogovskii-comm}
	[S, \rd_{j}] f = [S_{1}, \rd_{j}] f_{1} + [S_{2}, \rd_{j}] f_{2} + S_{1}((\rd_{j} f)_{1} - \rd_{j} f_{1}) + S_{2}((\rd_{j} f)_{2} - \rd_{j} f_{2}).
\end{equation}
That the $H^{s'}$ norm of the first two terms on the RHS is bounded by $\nrm{f}_{H^{s'-2}}$ is clear by $(S4)$ for $S_{k}$ and the definition of $f_{k}$. To estimate the $H^{s'}$ norm of the third term, in view of $(S3)$ for $S_{1}$, we write
\begin{equation*}
	\nrm{S_{1}((\rd_{j} f)_{1} - \rd_{j} f_{1})}_{H^{s'}}
	\aleq \nrm{(\rd_{j} f)_{1} - \rd_{j} f_{1}}_{H^{s'-2}}
	\aleq \nrm{\rd_{j} f \chi_{1} - \rd_{j} (f \chi_{1})}_{H^{s'-2}} + \nrm{f}_{H^{s'-2}},
\end{equation*}
where, for the last inequality, we used the observation that all terms involving $\tht^{\mu}$ may be bounded by $\nrm{f}_{H^{s'-2}}$. But $\nrm{\rd_{j} f \chi_{1} - \rd_{j} (f \chi_{1})}_{H^{s'-2}} = \nrm{f \rd_{j} \chi_{1}}_{H^{s'-2}} \aleq\nrm{f}_{H^{s'-2}}$, which is acceptable. The fourth term in \eqref{eq:bogovskii-comm} is handled similarly. \qedhere

\qedhere
\end{proof}

\subsection{Conic operators} \label{subsec:conic}
Here we state the main tool for inverting the LHS of \eqref{eq:constraint-h}--\eqref{eq:constraint-pi} while preserving the conic support property. We first define the relevant weighted Sobolev space.
\begin{definition} \label{def:Hb}
For $s \in \bbN_0$, the $\bm$-Sobolev space $\Hb^{s}(\bbR^{3})$ is defined by the norm
\begin{align*}
    \|u\|_{\Hb^s}^2:=\sum\limits_{k\leq s}\|\langle x\rangle^k \nabla^k u\|_{L^2(\bbR^3)}^2.
\end{align*}
We extend the definition to $s\in\bbR$ by duality and (complex) interpolation. We further define for $\delta\in\bbR$, $\Hb^{s,\delta}:=\langle x\rangle^{-\delta}\Hb^s$.
\end{definition}

In \cite{MaoTao} (see also \cite{OhTat}), the following was proved.
\begin{lemma} \label{lem:conic}
Let $\omg \subseteq \bbS^{2}$ be a convex open subset, and consider $\kpp \in C^{\infty}(\bbS^{2})$ with $\int \kpp = 1$ and $\supp \kpp \subseteq \omg$. Consider the linear translation-invariant operator $S_{c}$ defined for $f \in C_{c}^{\infty}(C_{\omg})$ and taking values in symmetric $2$-tensor fields (i.e., symmetric $3 \times 3$ matrix-valued functions) given by
\begin{equation*}
(S_{c} f)^{ij} = \int K_{\kpp}^{ij}(x - y) f(y) \, \ud y, \quad K_{\kpp}^{ij}(z) = \kpp(\tfrac{z}{\abs{z}}) \frac{ z^{i} z^{j}}{\abs{z}^{3}}.
\end{equation*}
Then $S_{c}$ satisfies
\begin{enumerate}[label=$(S_{c}\arabic*)$]
\item $\supp S_{c} f \subseteq C_{\omg}$;
\item $\rd_{i} \rd_{j} (S_{c} f)^{ij} = f$;
\item $\nrm{S f}_{H^{s', \dlt}_{\bm}} \aleq_{s', \dlt} \nrm{f}_{H^{s'-2, \dlt+2}_{\bm}}$ for any $s' \in \bbR$ and $\dlt < -\frac{1}{2}$.
\end{enumerate}
Moreover, consider the linear translation-invariant operator $T_{c}$ defined for $\bff \in C_{c}^{\infty}(C_{\omg}; \bbR^{3})$ and taking values in symmetric $2$-tensor fields given by
\begin{equation*}
T_{c} \bff = \int (K_{\kpp})^{ij}_{k}(x - y) \bff^{k} (y) \, \ud y, \quad (K_{\kpp})^{ij}_{k}(z) = \rd_{m} \left(\kpp(\tfrac{z}{\abs{z}}) \frac{z^{i} \bfdlt^{j}_{k} z^{m} + \bfdlt^{i}_{k} z^{j} z^{m} - z^{i} z^{j} \bfdlt^{m}_{k}}{\abs{z}^{3}}\right).
\end{equation*}
Then $T_{c}$ satisfies
\begin{enumerate}[label=$(T_{c}\arabic*)$]
\item $\supp T_{c} \bff \subseteq C_{\omg}$;
\item $\rd_{i} (T_{c} \bff)^{ij} = \bff^{j}$;
\item $\nrm{T_{c} \bff}_{H^{s', \dlt}_{\bm}} \aleq_{s'} \nrm{\bff}_{H^{s'-1, \dlt+1}_{\bm}}$ for any $s' \in \bbR$ and $\dlt < \frac{1}{2}$.
\end{enumerate}
\end{lemma}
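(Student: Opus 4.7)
I would split the proof into three essentially independent parts: (i) the support inclusions $(S_{c}1)$ and $(T_{c}1)$, (ii) the distributional identities $(S_{c}2)$ and $(T_{c}2)$, and (iii) the weighted $\Hb^{s', \dlt}$ mapping bounds $(S_{c}3)$ and $(T_{c}3)$. Items (i)--(ii) are direct algebraic consequences of the explicit kernels, while item (iii) carries the substantive analytic content.

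\textbf{Support and identities.} Convexity of $\omg \sbeq \bbS^{2}$ makes $C_{\omg}$ a convex cone in $\bbR^{3}$, and both kernels are supported in $\set{z : z/\abs{z} \in \supp \kpp} \sbeq C_{\omg}$ since they factor through $\kpp(z/\abs{z})$ or its derivatives. If $y \in \supp f \sbeq C_{\omg}$ and $x - y \in C_{\omg}$, convexity of the cone yields $x = y + (x-y) \in C_{\omg}$. For $(S_{c}2)$, I would verify $\rd_{i} \rd_{j} K_{\kpp}^{ij} = \dlt_{0}$ distributionally by pairing against $\vphi \in C_{c}^{\infty}(\bbR^{3})$, passing to polar coordinates, and applying the identity $\omg^{i} \omg^{j} \rd_{i} \rd_{j} \vphi(r \omg) = \rd_{r}^{2} \vphi(r \omg)$ to reduce the pairing to $\int_{\bbS^{2}} \kpp(\omg) \int_{0}^{\infty} r \rd_{r}^{2} \vphi(r \omg) \, \ud r \, \ud \sigma$; a single integration by parts in $r$ collapses the radial integral to $\vphi(0)$, and the angular integral contributes $\int \kpp = 1$. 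The identity $(T_{c}2)$ then follows from $(S_{c}2)$: the expression differentiated by the outer $\rd_{m}$ in $(K_{\kpp})^{ij}_{k}$ is structurally an $S_{c}$-type scalar kernel times $\bfdlt^{j}_{k}$ (modulo terms that vanish after the subsequent $\rd_{i}$), so a short product-rule computation yields $\rd_{i} (K_{\kpp})^{ij}_{k} = \bfdlt^{j}_{k} \dlt_{0}$.

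\textbf{Weighted estimates.} The kernels $K_{\kpp}^{ij}$ and $(K_{\kpp})^{ij}_{k}$ are smooth off the origin and homogeneous of degrees $-1$ and $-2$, respectively, so $S_{c}, T_{c}$ are translation-invariant Calder\'on--Zygmund-type convolution operators of orders $-2$ and $-1$. To bring these to $\Hb^{s', \dlt}$, I would perform a dyadic decomposition of $\bbR^{3}$ into shells $A_{R}$ with $R \in 2^{\bbZ}$, rescale each shell to unit size so that the $\Hb^{s', \dlt}$-norm reduces to a standard $H^{s'}$-norm with a weight factor $\aeq R^{-\dlt}$, and apply classical Euclidean $H^{s'}$-boundedness shell-by-shell. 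The off-diagonal shell-to-shell interactions decay polynomially in the dyadic scale separation thanks to the explicit homogeneity of the kernels, so Schur's test reassembles a global $\Hb^{s', \dlt}$ bound. The ranges $\dlt < -\tfrac{1}{2}$ (for $S_{c}$) and $\dlt < \tfrac{1}{2}$ (for $T_{c}$) emerge from balancing the scaling shift produced by the kernel's homogeneity against the $L^{2}$ summability of $\brk{x}^{-\dlt}$ across dyadic shells; these thresholds are precisely where such summation begins to fail.

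\textbf{Main obstacle.} The genuine content lies in the weighted estimate: controlling how a homogeneous convolution kernel interacts with the polynomial weight $\brk{x}^{-\dlt}$ across dyadic shells, and pinning down the sharp thresholds on $\dlt$. This is precisely the analysis already developed in \cite{MaoTao, OhTat}, which I would adapt; by contrast, the identities and support inclusions are essentially immediate from the explicit formulae for $K_{\kpp}^{ij}$ and $(K_{\kpp})^{ij}_{k}$.
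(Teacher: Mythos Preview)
Your proposal is correct and aligns with the paper's treatment: the paper likewise notes that $(S_{c}1)$ and $(T_{c}1)$ are immediate from convexity of $C_{\omg}$, verifies $(S_{c}2)$ by a direct kernel computation (using $z^{j}\rd_{j}\kpp(z/\abs{z})=0$ and $\rd_{j}(z^{i}z^{j}/\abs{z}^{3})=z^{i}/\abs{z}^{3}$ to reduce to $\rd_{i}(\kpp(z/\abs{z})\,z^{i}/\abs{z}^{3})=\dlt_{0}$), and defers $(S_{c}3)$, $(T_{c}3)$ to \cite{MaoTao}, where the weighted bounds are proved for general homogeneous kernels with the outgoing support property. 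Your polar-coordinate pairing for $(S_{c}2)$ and your reduction of $(T_{c}2)$ to $(S_{c}2)$ via cancellation of the two cross-terms are minor computational variants of the same argument.
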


\begin{remark}
    The definition of $K_{\kappa}$ is not exactly the same as \cite{MaoTao}, but the boundedness assertion $(S_c3)$ for the solution operator was proved for any homogeneous distribution with outgoing property, which clearly applies here. The fact $(S_c1)$ is obvious from the defintion and convexity of the cone and $(S_c2)$ can be checked by direct computation:
    \begin{align*}
        &\partial_i\partial_j \left(\kappa\left(\frac{z}{|z|}\right)\frac{z^iz^j}{|z|^3}\right)=\partial_i\left(\partial_j\kappa\left(\frac{z}{|z|}\right)\frac{z^iz^j}{|z|^3}+\kappa\left(\frac{z}{|z|}\right)\partial_j\left(\frac{z^iz^j}{|z|^3}\right)\right)=\partial_i\left(\kappa\left(\frac{z}{|z|}\right)\frac{z^i}{|z|^3}\right)=\delta_0.
    \end{align*}
\end{remark}

\subsection{Conservation laws for the linearized constraint equation} \label{subsec:charge}
We now state a tool for controlling the variation of charges $\bfQ$ (both averaged or unaveraged). For $0 < r_{0} < r_{1}$, and $\eta_{r}$ is as in \eqref{eq:charge-avg}, we introduce
\begin{equation} \label{eq:chi}
\chi_{r_{0}, r_{1}} (r)= \int_{-\infty}^{r} \left( \eta_{r_{0}}(r') - \eta_{r_{1}}(r') \right) \, \ud r'.
\end{equation}
\begin{lemma} \label{lem:charge}
Let $(g, k)$ and $(h, \pi)$ be related by \eqref{eq:hpi-def}. For any $0 < r_{0} < r_{1}$, we have
\begin{equation*}
	\int_{B_{r_{1}} \setminus B_{r_{0}}} \tfrac{1}{2} \rd_{i} \rd_{j} h^{ij} \begin{pmatrix} 1 \\ x_{1} \\ x_{2} \\ x_{3} \end{pmatrix} \, \ud x = \begin{pmatrix} \bfE \\ \bfC_{1} \\ \bfC_{2} \\ \bfC_{3} \end{pmatrix}[(g, k); \rd B_{r_{1}}]
	- \begin{pmatrix} \bfE \\ \bfC_{1} \\ \bfC_{2} \\ \bfC_{3} \end{pmatrix}[(g, k); \rd B_{r_{0}}],
\end{equation*}
\begin{equation*}
	\int_{B_{r_{1}} \setminus B_{r_{0}}} \sum_{j} \rd_{i} \pi^{ij} \begin{pmatrix} \bfe_{1}^{j} \\ \bfe_{2}^{j} \\ \bfe_{3}^{j} \\ \bfY_{1}^{j} \\ \bfY_{2}^{j} \\ \bfY_{3}^{j} \end{pmatrix} \, \ud x
	=
	\begin{pmatrix} \bfP_{1} \\ \bfP_{2} \\ \bfP_{3} \\ \bfJ_{1} \\ \bfJ_{2} \\ \bfJ_{3} \end{pmatrix}[(g, k); \rd B_{r_{1}}]
	- \begin{pmatrix} \bfP_{1} \\ \bfP_{2} \\ \bfP_{3} \\ \bfJ_{1} \\ \bfJ_{2} \\ \bfJ_{3} \end{pmatrix}[(g, k); \rd B_{r_{0}}],
\end{equation*}
and in terms of averaged charges, we have
\begin{equation*}
	\int \chi_{r_{0}, r_{1}}(r) \tfrac{1}{2}  \rd_{i} \rd_{j} h^{ij} \begin{pmatrix} 1 \\ x_{1} \\ x_{2} \\ x_{3} \end{pmatrix} \, \ud x = \begin{pmatrix} \bfE \\ \bfC_{1} \\ \bfC_{2} \\ \bfC_{3} \end{pmatrix}[(g, k); A_{r_{1}}]
	- \begin{pmatrix} \bfE \\ \bfC_{1} \\ \bfC_{2} \\ \bfC_{3} \end{pmatrix}[(g, k); A_{r_{0}}],
\end{equation*}
\begin{equation*}
	\int \sum_{j}  \chi_{r_{0}, r_{1}}(r) \rd_{i} \pi^{ij} \begin{pmatrix} \bfe_{1}^{j} \\ \bfe_{2}^{j} \\ \bfe_{3}^{j} \\ \bfY_{1}^{j} \\ \bfY_{2}^{j} \\ \bfY_{3}^{j} \end{pmatrix} \, \ud x
	=
	\begin{pmatrix} \bfP_{1} \\ \bfP_{2} \\ \bfP_{3} \\ \bfJ_{1} \\ \bfJ_{2} \\ \bfJ_{3} \end{pmatrix}[(g, k); A_{r_{1}}]
	- \begin{pmatrix} \bfP_{1} \\ \bfP_{2} \\ \bfP_{3} \\ \bfJ_{1} \\ \bfJ_{2} \\ \bfJ_{3} \end{pmatrix}[(g, k); A_{r_{0}}].
\end{equation*}
\end{lemma}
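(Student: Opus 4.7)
The plan is to derive each of the four unaveraged identities by one or two integration-by-parts steps, matching the resulting boundary integrands with the defining formulae \eqref{eq:charge-E}--\eqref{eq:charge-J}, and then to deduce the averaged identities by a one-dimensional integration by parts in the radial variable together with the coarea formula.

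The key algebraic observations are that $\partial_i h_{ij} = \partial_i g_{ij} - \partial_j g_{ii}$ (which follows from $h_{ij} = (g_{ij}-\delta_{ij}) - \delta_{ij}\tr_\delta(g-\delta)$) and that $\pi_{ij} = k_{ij} - \delta_{ij}\tr_\delta k$ already agrees with the tensorial combination in \eqref{eq:charge-P}. Granted these, the $\bfE$ identity is an immediate application of the divergence theorem to $\tfrac12\partial_i\partial_j h^{ij}$ on $B_{r_1}\setminus B_{r_0}$, and similarly the $\bfP_l$ identity follows at once from the divergence theorem applied to $\partial_i \pi^{il}$. For $\bfJ_k$ I would integrate by parts $\int \partial_i \pi^{ij} Y_k^j\, dx$ and note that the bulk remainder $\int \pi^{ij} \partial_i Y_k^j\, dx$ vanishes because $\partial_i(Y_k)^j = \tensor{\epsilon}{_{ki}^{j}}$ is antisymmetric in $(i,j)$ while $\pi^{ij}$ is symmetric, leaving just the boundary terms that define $\bfJ_k$.

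The main bookkeeping lies in the $\bfC_k$ identity. Here I would first integrate by parts to write
\begin{equation*}
\int_{B_{r_1}\setminus B_{r_0}} \tfrac12 x_k\, \partial_i\partial_j h^{ij}\, dx = \tfrac12\int_{\partial B_{r_1}} x_k \partial_i h_{ij} \nu^j\, dS - \tfrac12\int_{\partial B_{r_0}} x_k \partial_i h_{ij}\nu^j\, dS - \tfrac12\int_{B_{r_1}\setminus B_{r_0}} \partial_i h_{ik}\, dx,
\end{equation*}
and then integrate by parts a second time to convert the last volume term into $-\tfrac12 \int_{\partial B_{r}} h_{ik}\nu^i\, dS$. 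Substituting $h_{ik} = (g_{ik}-\delta_{ik}) - \delta_{ik}\tr_\delta(g-\delta)$ and using symmetry of $g$, one verifies by direct contraction that $-h_{ik}\nu^i = [-\delta_{ik}(g_{ij}-\delta_{ij}) + \delta_{jk}(g_{ii}-\delta_{ii})]\nu^j$, which precisely supplies the correction terms present in \eqref{eq:charge-C}. The first two boundary contributions above already match the $x_k\partial_i h_{ij}\nu^j$ part of $\bfC_k$, so the identity follows after combining.

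For the averaged versions, let $F(r)$ denote the unaveraged charge on $\partial B_r$ (so $F(r) = \bfE[(g,k);\partial B_r]$, etc.) and write $F(r_1) - F(r_0) = \int_{r_0}^{r_1} F'(r')\, dr'$, which together with the unaveraged identity and coarea gives $F'(r) = \int_{\partial B_r}(\text{integrand})\, dS$. Then
\begin{equation*}
\bfE[A_{r_1}] - \bfE[A_{r_0}] = \int (\eta_{r_1}(r') - \eta_{r_0}(r'))\, F(r')\, dr' = \int \chi_{r_0,r_1}(r')\, F'(r')\, dr',
\end{equation*}
where the second equality is integration by parts in $r'$, noting that $-\chi_{r_0,r_1}$ is an antiderivative of $\eta_{r_1}-\eta_{r_0}$ that vanishes at $\pm\infty$ by the normalization $\int\eta=1$. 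A final application of the coarea formula turns the right-hand side into $\int \chi_{r_0,r_1}(|x|)\cdot \tfrac12 \partial_i\partial_j h^{ij}\, dx$, as desired; the other three averaged identities follow identically. The only nontrivial step is the $\bfC_k$ bookkeeping above; everything else reduces to divergence-theorem and coarea manipulations.
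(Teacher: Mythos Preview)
Your proposal is correct and follows essentially the same approach as the paper: integration by parts together with the relation \eqref{eq:hpi-def}, using that $1, x_1, x_2, x_3$ lie in the kernel of the adjoint of $h \mapsto \partial_i\partial_j h^{ij}$ and $\bfe_1,\ldots,\bfY_3$ in the kernel of the adjoint of $\pi \mapsto \partial_i\pi^{ij}$. The paper states this in one sentence without computation, whereas you carry out the $\bfC_k$ bookkeeping and the radial integration-by-parts for the averaged charges explicitly; both are simply the detailed verification of what the paper asserts.
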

This lemma follows immediately by integration by parts, \eqref{eq:hpi-def} and the fact that $1, x_{1}, \ldots, x_{3}$ (resp.~$\bfe_{1}, \ldots, \bfY_{3}$) belong to the kernel of the $L^{2}$-adjoint of $h \mapsto \rd_{i} \rd_{j} h^{ij}$ (resp.~$\pi \mapsto \rd_{i} \pi^{ij}$).

\subsection{Conservation laws for the linearized Einstein vacuum equations} \label{subsec:charge-spt}

In this subsection, we work in an oriented spacetime domain $\bfOmg$ equipped with a flat (i.e., Minkowski) metric $\bfeta$. We use greek indices $\alp, \bt, \ldots$ to refer to spacetime tensorial indices. We always raise and lower indices using the flat metric $\bfeta$, and denote by $\nb$ the (trivial) covariant derivative with respect to $\bfeta$.

Consider the Einstein tensor $\bfG[\bfg] = \Ric[\bfg] - \frac{1}{2} \bfg \tr_{\bfg} \Ric[\bfg]$. Its linearization around the flat metric $\bfeta$, which trivially satisfies $\bfG[\bfeta] = 0$, takes the form
\begin{equation} \label{eq:lin-bfG}
	\uD_{\bfeta} \bfG [\dot{\bfg}]_{\alp \bt} = \frac{1}{2} \left( - \nb^{\gmm} \nb_{\gmm} \bfH_{\alp \bt} + \nb_{\alp} \nb^{\gmm} \bfH_{\gmm \bt} + \nb_{\bt} \nb^{\gmm} \bfH_{\gmm \alp} - \bfeta_{\alp \bt} \nb^{\gmm} \nb^{\dlt} \bfH_{\gmm \dlt} \right),
\end{equation}
where $\bfH = \dot{\bfg} - \frac{1}{2} \bfeta \tr_{\bfeta} \dot{\bfg}$. Note that, by linearizing the second Bianchi identity $\bfD^{\bt} \bfG[\bfg]_{\alp \bt} = 0$, we have $\nb^{\bt} \uD_{\bfeta} \bfG[\dot{\bfg}]_{\alp \bt} = 0$.



Let $\bfX$ be a Killing vector field with respect to $\bfeta$, i.e., $(\calL_{\bfX} \bfeta)_{\alp \bt} = \nb_{\alp} \bfX_{\bt} + \nb_{\bt} \bfX_{\alp} = 0$. By the linearized second Bianchi identity, observe that $\nb^{\bt} (\uD_{\bfeta} \bfG[\dot{\bfg}]_{\alp \bt} \bfX^{\alp}) = 0$; hence, the $1$-form $\uD_{\bfeta} \bfG[\dot{\bfg}]_{\alp \bt} \bfX^{\alp}$ is co-closed. In fact, there exists $2$-form whose co-differential is this $1$-form; following \cite[Appendix~E]{ChrDel} (see also \cite{Chr1}), we introduce 
\begin{equation} \label{eq:U-X}
	{}^{(\bfX)}\bbU_{\alp \bt}[\dot{\bfg}]
	= \frac{1}{2} \left[ \left(- \nb_{\alp} \bfH_{\gmm \bt} + \nb_{\bt} \bfH_{\gmm \alp}    + \bfeta_{\gmm \alp} \nb^{\dlt} \bfH_{\bt \dlt} - \bfeta_{\gmm \bt} \nb^{\dlt} \bfH_{\alp \dlt} \right)\bfX^{\gmm} + \bfH_{\gmm \alp} \nb^{\gmm} \bfX_{\bt} - \bfH_{\gmm \bt} \nb^{\gmm} \bfX_{\alp} \right].
\end{equation}
By a straightforward computation, it may be verified that
\begin{equation} \label{eq:U-X-div}
	\nb^{\alp} ({}^{(\bfX)} \bbU_{\alp \bt}[\dot{\bfg}]) = \uD_{\bfeta} \bfG [\dot{\bfg}]_{\alp \bt} \bfX^{\alp}.
\end{equation}
Equivalently, $\ud (\star {}^{(\bfX)} \bbU[\dot{\bfg}]) = - \star \uD_{\bfeta} \bfG[\dot{\bfg}](\bfX, \cdot)$ using the Hodge star operator $\star$ associated to $\bfeta$. By the Stokes theorem,
\begin{equation} \label{eq:U-X-stokes}
	\int_{\rd U} \star {}^{(\bfX)} \bbU[\dot{\bfg}] =  \int_{U} \ud (\star {}^{(\bfX)} \bbU[\dot{\bfg}]) = - \int_{U} \star \uD_{\bfeta} \bfG[\dot{\bfg}](\bfX, \cdot).
\end{equation}


Consider a system of coordinates $\set{x^{0}, \ldots, x^{3}}$ with respect to which $\bfeta = - (\ud x^{0})^{2} + (\ud x^{1})^{2} + (\ud x^{2})^{2} + (\ud x^{3})^{2}$ and $\ud x^{0} \wedge \ud x^{1} \wedge \ud x^{2} \wedge \ud x^{3}$ has positive orientation -- such coordinates shall be referred to as \emph{canonical}. On $\Sgm_{0} = \set{x^{0} = 0}$, define
\begin{equation} \label{eq:induced-data-lin}
\dot{g}_{i j} = \dot{\bfg}_{ij} \bb|_{\set{x^{0} = 0}}, \quad
\dot{k}_{i j} = \frac{1}{2} \left( \rd_{0} \dot{\bfg}_{i j} - \rd_{i} \dot{\bfg}_{j 0} - \rd_{j} \dot{\bfg}_{i 0} \right) \bb|_{\set{x^{0} = 0}}.
\end{equation}
It may be checked that (cf.~\eqref{eq:hpi-def} and \eqref{eq:constraint-abbrev})
\begin{equation} \label{eq:lin-constraint}
	\uD_{\bfeta}\bfG[\dot{\bfg}]_{0 0} =  \tfrac{1}{2} \rd_{j} \rd_{k} (\dot{g}^{jk} - \dlt^{jk} \tr_{\dlt} \dot{g}) , \quad
	\uD_{\bfeta}\bfG[\dot{\bfg}]_{0 j} =  \rd^{\ell} (\dot{k}_{j \ell} - \dlt_{j \ell} \tr_{\dlt} \dot{k}_{j \ell}).
\end{equation}
Indeed, these may be proved by linearizing the nonlinear relations $\bfG[\bfg]_{00} = \frac{1}{2} (R[\br{g}] - (\tr_{\br{g}} \br{k})^{2} + \abs{\br{k}}_{\br{g}}^{2})$, $\bfG[\bfg]_{0j} = \br{D}^{\ell} \br{k}_{j \ell} - \rd_{j} \tr \br{k}$ on $\Sgm_{0}$, where $\br{g}$ is the induced metric, $\br{D}$ is the induced connection and $\br{k}$ is the second fundamental form on $\Sgm_{0}$ (see \eqref{eq:induced-data} below for the formulae for $(\br{g}, \br{k})$).

The space of Killing vector fields with respect to $\bfeta$ are spanned by $\rd_{x^{\mu}}$ and $x_{\mu} \rd_{x^{\nu}} - x_{\nu} \rd_{x^{\mu}}$ ($\mu, \nu = 0, 1, 2, 3$). For $r > 0$, we define the associated charges by
\begin{equation} \label{eq:charge-spt}
\begin{aligned}
	\bbP_{\mu}[\dot{\bfg}; \set{x^{0} = 0, \, \abs{x} = r}] &=  \int_{\set{x^{0} = 0, \, \abs{x} = r}} \star {}^{(\rd_{x^{\mu}})} \bbU[\dot{\bfg}], \\
	\bbJ_{\mu \nu}[\dot{\bfg}; \set{x^{0} = 0, \, \abs{x} = r}] &=  \int_{\set{x^{0} = 0, \, \abs{x} = r}} \star {}^{(x_{\mu} \rd_{x^{\nu}} - x_{\nu} \rd_{x^{\mu}})} \bbU[\dot{\bfg}],
\end{aligned}
\end{equation}
where $\set{x^{0} = 0, \, \abs{x} = r}$ is oriented with $\star (\ud x^{0} \wedge \ud \abs{x})$. We have the following relationship between $\bbP_{\mu}$, $\bbJ_{\mu \nu}$ and the charges for $(g, k)$.

\begin{lemma} \label{lem:charge-spt}
For any $r > 0$ and canonical coordinates $x^{\mu}$ on $\bfOmg$ containing $\set{x^{0} = 0, \, \abs{x} < r}$,
\begin{align*}
( \bbP_{0}, \bbP_{i}, \bbJ_{i 0}, \bbJ_{j k})[\dot{\bfg}; \set{x^{0} = 0, \, \abs{x} = r}]
 = (\bfE, \bfP_{i}, \bfC_{i}, \tensor{\eps}{^{i}_{jk}} \bfJ_{i})[(\dlt + \dot{g}, \dot{k}); \rd B_{r}],
\end{align*}
where $(\dot{g}, \dot{k})$ is given in terms of $\dot{\bfg}$ by \eqref{eq:induced-data-lin} in the coordinates $(x^{0}, x^{1}, x^{2}, x^{3})$.
\end{lemma}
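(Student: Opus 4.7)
The plan is to convert the spacetime boundary charge integrals $\bbP_{\mu}$, $\bbJ_{\mu \nu}$ into volume integrals on the Euclidean $3$-ball $B_{r} \subset \set{x^{0} = 0}$, and then recognize each resulting integrand as the input of Lemma~\ref{lem:charge}. Applying the Stokes identity \eqref{eq:U-X-stokes} with $U = B_{r}$, whose oriented boundary via $\star(\ud x^{0} \wedge \ud \abs{x})$ is precisely the sphere appearing in \eqref{eq:charge-spt}, we obtain
\[
\int_{\set{x^{0} = 0,\,\abs{x} = r}} \star\, {}^{(\bfX)} \bbU[\dot{\bfg}]  = \pm \int_{B_{r}} \uD_{\bfeta} \bfG[\dot{\bfg}]_{0 \alpha} \bfX^{\alpha}\, \ud^{3} x,
\]
where the universal sign is fixed by a direct computation of the Hodge star of a $1$-form in canonical coordinates and calibrated once (e.g., against a linearized Schwarzschild slice with $\bfX = \rd_{x^{0}}$).

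Next, invoke \eqref{eq:lin-constraint} to rewrite $\uD_{\bfeta} \bfG[\dot{\bfg}]_{0 \alpha}$ in terms of the variables $(h, \pi)$ obtained from $(\dot{g}, \dot{k})$ via \eqref{eq:hpi-def}. For the four families of Killing vector fields on Minkowski space, the integrand $\uD_{\bfeta} \bfG[\dot{\bfg}]_{0 \alpha} \bfX^{\alpha}$ restricted to $\set{x^{0} = 0}$ evaluates to:
(i) $\tfrac{1}{2} \rd_{i} \rd_{j} h^{ij}$ for $\bfX = \rd_{x^{0}}$;
(ii) $\rd^{\ell} \pi_{i \ell}$ for $\bfX = \rd_{x^{i}}$;
(iii) $\tfrac{1}{2} x^{i}\, \rd_{j} \rd_{k} h^{jk}$ for $\bfX = x_{i} \rd_{x^{0}} - x_{0} \rd_{x^{i}}$, using $x_{0} = -x^{0}$ and $x_{i} = x^{i}$, so that $\bfX|_{\set{x^{0} = 0}} = x^{i} \rd_{x^{0}}$;
and (iv) $x^{j} \rd^{\ell} \pi_{k \ell} - x^{k} \rd^{\ell} \pi_{j \ell}$ for $\bfX = x_{j} \rd_{x^{k}} - x_{k} \rd_{x^{j}}$, which by the $3$D identity $\dlt^{j}_{a} \dlt^{k}_{b} - \dlt^{k}_{a} \dlt^{j}_{b} = \tensor{\eps}{^i_{jk}} \eps_{i a b}$ together with $\bfY_{i}^{b} = \eps_{i a b} x^{a}$ equals $\tensor{\eps}{^i_{jk}} \bfY_{i}^{b}\, \rd^{\ell} \pi_{b \ell}$.

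Finally, applying Lemma~\ref{lem:charge} with $r_{0} \to 0$ (for sufficiently regular data, the boundary charge at $r_{0}$ vanishes as $r_{0} \to 0$) converts these four volume integrals into $\bfE[\rd B_{r}]$, $\bfP_{i}[\rd B_{r}]$, $\bfC_{i}[\rd B_{r}]$, and $\tensor{\eps}{^i_{jk}} \bfJ_{i}[\rd B_{r}]$ respectively, which is the claim of the lemma. The principal bookkeeping obstacle is tracking the signs arising from the Minkowski Hodge star, the Stokes orientation of $\rd B_{r}$, and the Lorentzian signature convention $x_{0} = -x^{0}$; these are fixed uniformly across all four families by the single calibration above, and all remaining steps are direct algebra via \eqref{eq:lin-constraint} and \eqref{eq:hpi-def}.
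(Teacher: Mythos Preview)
Your proposal is correct and follows essentially the same approach as the paper: apply the Stokes identity \eqref{eq:U-X-stokes} with $U = \set{x^{0} = 0,\, \abs{x} < r}$, use \eqref{eq:lin-constraint} to identify the resulting volume integrand for each of the four families of Killing fields, and then compare with Lemma~\ref{lem:charge}. Your write-up simply spells out in more detail (the case-by-case evaluation of $\uD_{\bfeta}\bfG[\dot{\bfg}]_{0\alpha}\bfX^{\alpha}$ and the sign calibration) what the paper indicates in one sentence.
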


Indeed, these identities may be quickly verified by comparing \eqref{eq:U-X-stokes} with $U = \set{x^{0} = 0, \, \abs{x} < r}$ and \eqref{eq:lin-constraint} with Lemma~\ref{lem:charge}.

We conclude this discussion with some formulae concerning Poincar\'e transformations. Let $\tensor{\Lmb}{^{\mu}_{\nu}} \in SO^{+}(1, 3)$, i.e., $\bfeta_{\mu \nu} \tensor{\Lmb}{^{\mu}_{\mu'}} \tensor{\Lmb}{^{\nu}_{\nu'}} = \bfeta_{\mu' \nu'}$ (isometry), $\det \Lmb = 1$ (proper) and $\tensor{\Lmb}{^{0}_{0}} > 0$ (orthochronous) and $\bfxi \in \bbR^{1+3}$. If $\set{x^{\mu}}$ is a system of canonical coordinates, then so is $\set{y^{\mu} = \tensor{\Lmb}{^{\mu}_{\mu'}} x^{\mu'} + \bfxi^{\mu}}$. Observe the following transformation laws for the Killing vector fields:
\begin{equation} \label{eq:LT-killing-vf}
\begin{aligned}
	\rd_{y^{\mu}} &= \tensor{\Lmb}{_{\mu}^{\mu'}} \rd_{x^{\mu'}}, \\
	y_{\mu} \rd_{y^{\nu}} - y_{\nu} \rd_{y^{\mu}} &= \tensor{\Lmb}{_{\mu}^{\mu'}} \tensor{\Lmb}{_{\nu}^{\nu'}} (x_{\mu'} \rd_{x^{\nu'}} - x_{\nu'} \rd_{x^{\mu'}}) + \bfxi_{\mu}\tensor{\Lmb}{_{\nu}^{\nu'}} \rd_{x^{\nu'}} - \bfxi_{\nu}\tensor{\Lmb}{_{\mu}^{\mu'}} \rd_{x^{\mu'}},
\end{aligned}\end{equation}
where $\tensor{\Lmb}{_{\mu}^{\nu}}$ (defined via index raising and lowering using $\bfeta$) is identical to $\tensor{(\Lmb^{-1})}{^{\nu}_{\mu}}$ in view of the isometry property. Using \eqref{eq:U-X-stokes}, \eqref{eq:charge-spt}, \eqref{eq:LT-killing-vf} and Lemma~\eqref{lem:charge-spt}, as well as the linearity of ${}^{(\bfX)} \bbU$ in $\bfX$, transformation properties of the charges $(\bfE, \bfP, \bfC, \bfJ)$ under isometries of $\bfeta$ may be derived.


\section{Gluing up to linear obstructions} \label{sec:gluing}
\begin{proof}[Proof of Theorem~\ref{thm:gluing-annulus}]
\noindent
{\bf Step~1.}
We begin by forming the first trial for the desired initial data set. Let $\chi(x)$ be a smooth radial function which equals $0$ for $\abs{x} < 1$ and $1$ for $\abs{x} > 2$. We introduce
\begin{equation*}
	(g_{in; Q}, k_{in; Q}, g_{out; Q}, k_{out; Q})
	= \begin{cases}
	(\mathring{g}, \mathring{k}, g_{Q}, k_{Q}) & \hbox{ for Statement~(1),} \\
	(g_{Q}, k_{Q}, \mathring{g}, \mathring{k}) & \hbox{ for Statement~(2).}
	\end{cases}
\end{equation*}
Let $(h_{in; Q}, \pi_{in; Q})$ and $(h_{out; Q}, \pi_{out; Q})$ be defined by \eqref{eq:hpi-def} with $(g, k)$ replaced by $(g_{in; Q}, k_{in; Q})$ and $(g_{out; Q}, k_{out; Q})$, respectively. We introduce the first guess for the glued initial data, namely,
\begin{equation} \label{eq:hpi-first}
	(\br{h}_{Q}, \br{\pi}_{Q})
	= (1-\chi) (h_{in; Q}, \pi_{in; Q}) + \chi (h_{out; Q}, \pi_{out; Q}).
\end{equation}
Of course, $(\br{h}_{Q}, \br{\pi}_{Q})$ would not solve the constraint equations \eqref{eq:constraint-h}--\eqref{eq:constraint-pi}; our aim is to find a correction $(\td{h}_{Q}, \td{\pi}_{Q})$ supported in $A_{1}$ and a choice of $Q \in \calQ$ such that $(h, \pi) = (\br{h}_{Q} + \td{h}_{Q}, \br{\pi}_{Q} + \td{\pi}_{Q})$ solves \eqref{eq:constraint-h}--\eqref{eq:constraint-pi}.

A quick algebraic computation shows that we want $(\td{h}_{Q}, \td{\pi}_{Q})$ to satisfy
\begin{align}
&
\begin{aligned}
	\rd_{i} \rd_{j} \td{h}_{Q}^{ij}
	&= F_{Q} + M_{\br{h}_{Q}+\td{h}_{Q}}^{(2)} (\br{h}_{Q}+\td{h}_{Q}, \rd^{2}(\br{h}_{Q}+\td{h}_{Q}))
	- M_{\br{h}_{Q}}^{(2)} (\br{h}_{Q}, \rd^{2} \br{h}_{Q}) \\
	&\peq + M_{\br{h}_{Q}+\td{h}_{Q}}^{(1)} (\rd (\br{h}_{Q}+\td{h}_{Q}), \rd (\br{h}_{Q}+\td{h}_{Q}))
	- M_{\br{h}_{Q}}^{(1)} (\rd \br{h}_{Q}, \rd \br{h}_{Q})  \\
	&\peq + M_{\br{h}_{Q}+\td{h}_{Q}}^{(0)}(\br{\pi}_{Q}+\td{\pi}_{Q}, \br{\pi}_{Q}+\td{\pi}_{Q})
	- M_{\br{h}_{Q}}^{(0)}(\br{\pi}_{Q}, \br{\pi}_{Q}),
\end{aligned}	\label{eq:tdh} \\
&
\begin{aligned}
\rd_{i} \td{\pi}_{Q}^{ij}
&= G_{Q}^{j}
+ N^{(1)j}_{\br{h}_{Q} + \td{h}_{Q}}(\br{h}_{Q} + \td{h}_{Q}, \rd (\br{\pi}_{Q} + \td{\pi}_{Q}))
- N^{(1)j}_{\br{h}_{Q}}(\br{h}_{Q}, \rd \br{\pi}_{Q}) \\
&\peq + N^{(0)j}_{\br{h}_{Q} + \td{h}_{Q}}(\rd(\br{h}_{Q} + \td{h}_{Q}), \br{\pi}_{Q} + \td{\pi}_{Q})
- N^{(0)j}_{\br{h}_{Q}}(\rd \br{h}_{Q}, \rd \br{\pi}_{Q}),
\end{aligned} \label{eq:tdpi}
\end{align}
where $F_{Q}$ and $G_{Q}$ are the errors incurred by plugging $(\br{h}_{Q}, \br{\pi}_{Q})$ into \eqref{eq:constraint-h}--\eqref{eq:constraint-pi}, i.e.,
\begin{align*}
	F_{Q} &= - \rd_{i} \rd_{j} \br{h}_{Q}^{ij} + M_{\br{h}_{Q}}^{(2)}(\br{h}_{Q}, \rd^{2} \br{h}_{Q}) + M_{\br{h}_{Q}}^{(1)}(\rd \br{h}_{Q}, \rd \br{h}_{Q}) + M_{\br{h}_{Q}}^{(0)}(\br{\pi}_{Q}, \br{\pi}_{Q}), \\
	G_{Q}^{j} &= -\rd_{i} \br{\pi}_{Q}^{ij} + N_{\br{h}_{Q}}^{(1)j}(\br{h}_{Q}, \rd \br{\pi}_{Q}) + N_{h}^{(0)j}(\rd \br{h}_{Q}, \br{\pi}_{Q}).
\end{align*}

There is, of course, considerable flexibility in specifying $(\td{h}_{Q}, \td{\pi}_{Q})$ due to the underdetermined nature of the problem, which we now fix. Let $(S, T)$ be defined as in Lemma~\ref{lem:bogovskii} with $\Omg = A_{1}$. For each $Q \in \calQ$, we look for $(\td{h}_{Q}, \td{\pi}_{Q})$ solving the fixed point problems
\begin{align}
	(\td{h}_{Q}, \td{\pi}_{Q}) = \vec{S}(\td{M}_{Q}, \td{N}_{Q}), \label{eq:tdhpi-fixed-pt}
\end{align}
where $\td{M}_{Q}$ and $\td{N}^{j}_{Q}$ are our shorthands for the RHS of \eqref{eq:tdh} and \eqref{eq:tdpi}, respectively, and $\vec{S}(F, G) = (SF, TG)$. Next, we look for $Q \in \calQ$ such that
\begin{align}
	\int \tfrac{1}{2} \td{M}_{Q} ( 1, x_{1}, x_{2}, x_{3})^{\dagger} \, \ud x &= 0, \label{eq:tdh-charge} \\
	\int \td{N}_{Q} \cdot (\bfe_{1}, \bfe_{2}, \bfe_{3}, \bfY_{1}, \bfY_{2}, \bfY_{3})^{\dagger} \, \ud x&= 0, \label{eq:tdpi-charge}
\end{align}
which would ensure that $\td{h}_{Q}$, $\td{\pi}_{Q}$ solve \eqref{eq:tdh} and \eqref{eq:tdpi} by $(S2)$, $(T2)$.

\smallskip \noindent
{\bf Step~2: Finding $(\td{h}_{Q}, \td{\pi}_{Q})$.}
We shall use the following standard estimates:
\begin{lemma} \label{lem:moser-product}
Let $u, v$ be (possibly vector-valued) Schwartz functions on $\bbR^{3}$.
\begin{enumerate}
\item {\bf Moser estimates.} Let $F$ be a $C^{\infty}$ function with bounded derivatives. Then for $s > 0$, we have
\begin{align}
\nrm{F(u) - F(0)}_{H^{s}} \aleq_{s, F, \nrm{u}_{L^{\infty}}} \nrm{u}_{H^{s}}. \label{eq:moser}
\end{align}
\item {\bf Moser difference estimates.} Let $F$ be a $C^{\infty}$ function with bounded derivatives. Then for $s > 0$, we have
\begin{align}
\nrm{F(u) - F(v)}_{H^{s}} \aleq_{s, F, \nrm{u}_{L^{\infty}}, \nrm{v}_{L^{\infty}}} \nrm{u-v}_{H^{s}} + \nrm{u-v}_{L^{\infty}} \left(\nrm{u}_{H^{s}} + \nrm{v}_{H^{s}}\right). \label{eq:moser-diff}
\end{align}

\item {\bf Product estimates.} For $s_{0}, s_{1}, s_{2}$ such that $s_{0} + s_{1} + s_{2} \geq \frac{3}{2}$, $s_{0} + s_{1} + s_{2} \geq \max\set{s_{0}, s_{1}, s_{2}}$ with at least one of the inequalities strict, we have
\begin{align}
\nrm{u \cdot v}_{H^{-s_{0}}} \aleq_{s_{0}, s_{1}, s_{2}} \nrm{u}_{H^{s_{1}}} \nrm{v}_{H^{s_{2}}}. \label{eq:product}
\end{align}
\end{enumerate}
\end{lemma}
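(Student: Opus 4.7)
The plan is to establish the three parts in the order (3), (1), (2), since both Moser estimates reduce via the fundamental theorem of calculus to the product estimate combined with standard Littlewood--Paley tools.

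For the product estimate (3), the approach is Bony's paraproduct decomposition $u v = T_{u} v + T_{v} u + R(u, v)$ combined with the Littlewood--Paley square-function characterization of $H^{-s_{0}}$ and Bernstein's inequality on $\bbR^{3}$. The ``triangle'' conditions $s_{i} + s_{j} \geq 0$ for distinct $i, j \in \set{0, 1, 2}$ (equivalent to $s_{0} + s_{1} + s_{2} \geq \max\set{s_{0}, s_{1}, s_{2}}$) ensure summability of the dyadic series arising from the two paraproducts, while the hypothesis $s_{0} + s_{1} + s_{2} \geq \tfrac{3}{2}$ (with the strict-inequality qualifier handling the endpoint case) is precisely the Bernstein threshold needed to sum the high-high remainder $R(u,v)$. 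This is essentially the classical Sobolev multiplication theorem on $\bbR^{3}$.

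For the Moser estimate (1), I would use a standard Littlewood--Paley telescoping: write $F(u) - F(0)$ as a telescoping sum of differences of the form $F(u_{\leq j}) - F(u_{\leq j-1})$, where $u_{\leq j}$ denotes the low-pass truncation of $u$ at frequency $2^{j}$, apply the fundamental theorem of calculus to express each difference as $(u_{\leq j} - u_{\leq j-1}) \int_{0}^{1} F'(u_{\leq j-1} + t(u_{\leq j} - u_{\leq j-1})) \, \ud t$, and estimate each summand in $H^{s}$ via (3), the uniform bound $\nrm{F'}_{L^{\infty}} \aleq_{F, \nrm{u}_{L^{\infty}}} 1$, and the frequency localization of $u_{\leq j} - u_{\leq j-1}$. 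Geometric summability in $j$ is provided by the assumption $s > 0$.

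Finally, (2) follows from the identity
\begin{equation*}
F(u) - F(v) = (u-v) \int_{0}^{1} F'(v + t(u-v)) \, \ud t,
\end{equation*}
to which I apply the standard Sobolev product estimate $\nrm{fg}_{H^{s}} \aleq \nrm{f}_{H^{s}} \nrm{g}_{L^{\infty}} + \nrm{f}_{L^{\infty}} \nrm{g}_{H^{s}}$ (itself a corollary of (3)) with $f = u-v$ and $g$ equal to the integral factor, controlling $\nrm{g}_{H^{s}}$ via (1) applied to $F'$ and the triangle inequality $\nrm{v + t(u-v)}_{H^{s}} \leq \nrm{u}_{H^{s}} + \nrm{v}_{H^{s}}$. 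The main obstacle is purely notational---tracking which triple $(s_{0}, s_{1}, s_{2})$ is admissible in each application of (3)---since the three estimates are textbook material from paradifferential calculus and present no conceptual difficulty.
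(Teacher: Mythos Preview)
The paper does not actually prove this lemma; it simply cites \cite[Sec.~2.8]{BCD} (Bahouri--Chemin--Danchin) and remarks that the results therein easily imply it. Your outline via Bony's paraproduct decomposition for (3), the Littlewood--Paley telescoping (Meyer's method) for (1), and the fundamental theorem of calculus combined with the Kato--Ponce-type product bound for (2) is precisely the standard route found in that reference, so your proposal is correct and entirely consistent with what the paper defers to.
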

See, for instance, \cite[Sec.~2.8]{BCD}; the results therein easily imply Lemma~\ref{lem:moser-product}. In what follows, we shall apply \eqref{eq:moser} and \eqref{eq:moser-diff} with the given $s$, and \eqref{eq:product} with $(s_{0}, s_{1}, s_{2}) = (2-s, s, s-2)$ and $(2-s, s-1, s-1)$, all of which are valid thanks to $s > \frac{3}{2}$.

For each $Q \in \calQ$, we now find $(\td{h}_{Q}, \td{\pi}_{Q})$ satisfying \eqref{eq:tdhpi-fixed-pt}. We first claim that,
\begin{equation} \label{eq:hpi-source}
	\supp (F_{Q}, G_{Q}) \subseteq A_{1}, \quad \nrm{F_{Q}}_{H^{s-2}}+\nrm{G_{Q}}_{H^{s-2}} \aleq \eps.
\end{equation}
Indeed, from \eqref{eq:hpi-first} it is clear that $(\br{h}_{Q}, \br{\pi}_{Q})$ solves \eqref{eq:constraint-h}--\eqref{eq:constraint-pi} outside $A_{1}$, from which the support property follows. Using the definition of localized norms, the support property and Lemma~\ref{lem:moser-product}, the Sobolev norm bound follows\footnote{We note that, in fact, $F_{Q}$ and $G_{Q}$ enjoy better Sobolev regularities, but this gain is useless in view of \eqref{eq:hpi-nonlin}, which is sharp.}.

Next, we claim that if $\supp (\td{h}_{Q}, \td{\pi}_{Q}) \subseteq A_{1}$ and $\nrm{(\td{h}_{Q}, \td{\pi}_{Q})}_{H^{s} \times H^{s-1}} \leq M_{c} \eps$ for $M_{c} > 0$, then
\begin{equation} \label{eq:hpi-nonlin}
	\supp (\td{M}_{Q} - F_{Q}, \td{N}_{Q} - G_{Q}) \subseteq A_{1}, \quad \nrm{\td{M}_{Q} - F_{Q}}_{H^{s-2}} + \nrm{\td{N}_{Q} - G_{Q}^{j}}_{H^{s-2}} \aleq_{M_{c}} \eps^{2}.
\end{equation}
Indeed, the support property follows from that of $(\td{h}_{Q}, \td{\pi}_{Q})$ and the structure of the terms $\td{M}_{Q} - F_{Q}$ and $\td{N}_{Q} - G_{Q}$. The Sobolev norm bound follows from the assumptions on $(\td{h}_{Q}, \td{\pi}_{Q})$ and $(\br{h}_{Q}, \br{\pi}_{Q})$, as well as Lemma~\ref{lem:moser-product}. By the same lemma, if, in addition to the previous assumptions for $(\td{h}_{Q}, \td{\pi}_{Q})$, we have $\supp (\td{h}'_{Q}, \td{\pi}'_{Q}) \subseteq A_{1}$ and $\nrm{(\td{h}'_{Q}, \td{\pi}'_{Q})}_{H^{s} \times H^{s-1}} \leq M_{c} \eps$, then
\begin{equation} \label{eq:hpi-nonlin-lip}
\begin{aligned}
	&\nrm{\td{M}_{q}[(\td{h}_{Q}, \td{\pi}_{Q})] - \td{M}_{Q}[(\td{h}'_{Q}, \td{\pi}'_{Q})]}_{H^{s-2}} + \nrm{\td{N}_{Q}[(\td{h}_{Q}, \td{\pi}_{Q})] - \td{N}_{Q}[(\td{h}'_{Q}, \td{\pi}'_{Q})]}_{H^{s-2}} \\
	&\aleq \eps \nrm{(\td{h}_{Q} - \td{h}_{Q}', \td{\pi}_{Q} - \td{\pi}_{Q}')}_{H^{s} \times H^{s-1}}.
\end{aligned}
\end{equation}

Therefore, by a standard Picard iteration argument, if $0 < \eps < \eps_{c}$ and $\eps_{c} > 0$ is sufficiently small, then for every $Q \in \calQ$ we may find a unique $(\td{h}_{Q}, \td{\pi}_{Q}) \in H^{s} \times H^{s-1}$. By the support-preserving property of $S$ and $T$, it follows that $(\td{h}_{Q}, \td{\pi}_{Q})$ vanish outside of $A_{1}$. Finally, by varying $Q$ and using a similar argument as before, one may verify that $Q \mapsto (\td{h}_{Q}, \td{\pi}_{Q}) \in H^{s} \times H^{s-1}$ is locally Lipschitz, whose details we omit.

\smallskip \noindent
{\bf Step~3: Finding $Q$.} To conclude the proof of existence, it only remains to find $Q \in \calQ$ such that \eqref{eq:tdh-charge}--\eqref{eq:tdpi-charge} hold. Since $(\br{h}_{Q}, \br{\pi}_{Q})$ and $(\td{h}_{Q}, \td{\pi}_{Q})$ are $\eps$-small on $\td{A}_{1}$, it is sensible to isolate the linear terms $-\rd_{i} \rd_{j} \br{h}_{Q}^{ij}$ and $-\rd_{i} \br{\pi}_{Q}^{ij}$ on the RHS of \eqref{eq:tdh} and \eqref{eq:tdpi}, respectively. Introduce $\chi_{\frac{1}{2}, 2}$ as in \eqref{eq:chi}.
Observe that $\supp \chi_{\frac{1}{2}, 2}(r) \subseteq \td{A}_{1}$, and $\chi_{\frac{1}{2}, 2} = 1$ on $A_{1}$, which contains the support of the RHS of \eqref{eq:tdh}. Hence, by Lemma~\ref{lem:charge}, the LHS of \eqref{eq:tdh-charge} equals
\begin{align*}
&\int - \chi_{\frac{1}{2}, 2}(r) \tfrac{1}{2}  \rd_{i} \rd_{j} \br{h}_{Q}^{ij} \begin{pmatrix} 1 \\ x_{1} \\ x_{2} \\ x_{3} \end{pmatrix} \, \ud x
+ \int_{\td{A}_{1}} \chi_{\frac{1}{2}, 2}(r) \tfrac{1}{2} \td{M}^{nonlin}_{Q} \begin{pmatrix} 1 \\ x_{1} \\ x_{2} \\ x_{3} \end{pmatrix} \, \ud x \\
&= \begin{pmatrix} \bfE \\ \bfC_{1} \\ \bfC_{2} \\ \bfC_{3} \end{pmatrix} [(g_{in; Q}, k_{in; Q}); A_{\frac{1}{2}}]
- \begin{pmatrix} \bfE \\ \bfC_{1} \\ \bfC_{2} \\ \bfC_{3} \end{pmatrix} [(g_{out; Q}, k_{out; Q}); A_{2}]
 + \int_{\td{A}_{1}} \chi_{\frac{1}{2}, 2}(r) \tfrac{1}{2} \td{M}^{nonlin}_{Q} \begin{pmatrix} 1 \\ x_{1} \\ x_{2} \\ x_{3} \end{pmatrix} \, \ud x,
\end{align*}
where $\td{M}^{nonlin}_{Q} = (\hbox{RHS of \eqref{eq:tdh}}) + \rd_{i} \rd_{j} \br{h}_{Q}^{ij}$. To compute further the first two terms on the last line, we multiply \eqref{eq:constraint-h} for $(h_{in}, \pi_{in})$ by $\chi_{\frac{1}{2}, 1}(r)$ and integrate by parts; by which we obtain
\begin{equation*}
	\begin{pmatrix} \bfE \\ \vdots \\ \bfC_{3} \end{pmatrix} [(g_{in; Q}, k_{in; Q}); A_{\frac{1}{2}}]
	=
	\begin{pmatrix} \bfE \\ \vdots \\ \bfC_{3} \end{pmatrix} [(g_{in; Q}, k_{in; Q}); A_{1}]
	+ \int_{\td{A}_{1}} \chi_{\frac{1}{2}, 1}(r) \tfrac{1}{2} M_{in; Q}^{nonlin} \begin{pmatrix} 1 \\ \vdots \\ x_{3} \end{pmatrix} \, \ud x,
\end{equation*}
where $M_{in; Q}^{nonlin} = (\hbox{RHS of \eqref{eq:constraint-h}})$ with $(h, \pi) = (h_{in; Q}, \pi_{in; Q})$. Carrying out a similar computation for $(h_{out; Q}, \pi_{out; Q})$ using $\chi_{1, 2}$ and $M_{out; Q}^{nonlin} = (\hbox{RHS of \eqref{eq:constraint-h}})$ with $(h, \pi) = (h_{out; Q}, \pi_{out; Q})$, then going back to the previous computation, we arrive at
\begin{equation} \label{eq:tdh-charge-f}
\begin{aligned}
(\hbox{LHS of \eqref{eq:tdh-charge}})
&= \begin{pmatrix} \bfE \\ \vdots \\ \bfC_{3} \end{pmatrix} [(g_{in; Q}, k_{in; Q}); A_{1}]
- \begin{pmatrix} \bfE \\ \vdots \\ \bfC_{3} \end{pmatrix} [(g_{out; Q}, k_{out; Q}); A_{1}]
& + \begin{pmatrix} n[Q]_{\bfE} \\ \vdots \\ n[Q]_{\bfC_{3}} \end{pmatrix}  \end{aligned}
\end{equation}
where
\begin{equation} \label{eq:n0-3}
\begin{pmatrix} n[Q]_{\bfE} \\ \vdots \\ n[Q]_{\bfC_{3}} \end{pmatrix}
=
\int_{\td{A}_{1}} \tfrac{1}{2} \left( \chi_{\frac{1}{2}, 2}(r) \td{M}_{Q}^{nonlin} + \chi_{\frac{1}{2}, 1}(r) M_{in; Q}^{nonlin} + \chi_{1, 2}(r) M_{out; Q}^{nonlin} \right) \begin{pmatrix} 1 \\ \vdots \\ x_{3} \end{pmatrix} \, \ud x.
\end{equation}
Similarly, working with \eqref{eq:tdpi}, we may show that
\begin{equation} \label{eq:tdpi-charge-f}
\begin{aligned}
(\hbox{LHS of \eqref{eq:tdpi-charge}})
&= \begin{pmatrix} \bfP_{1} \\ \vdots \\ \bfJ_{3} \end{pmatrix} [(g_{in; Q}, k_{in; Q}); A_{1}]
- \begin{pmatrix} \bfP_{1} \\ \vdots \\ \bfJ_{3} \end{pmatrix} [(g_{out; Q}, k_{out; Q}); A_{1}]
+\begin{pmatrix} n[Q]_{\bfP_{1}} \\ \vdots \\ n[Q]_{\bfJ_{3}} \end{pmatrix}
\end{aligned}
\end{equation}
with
\begin{equation} \label{eq:n4-9}
\begin{pmatrix} n[Q]_{\bfP_{1}} \\ \vdots \\ n[Q]_{\bfJ_{3}} \end{pmatrix}
=
\int_{\td{A}_{1}} \left( \chi_{\frac{1}{2}, 2}(r) \td{N}_{Q}^{nonlin} + \chi_{\frac{1}{2}, 1}(r) N_{in; Q}^{nonlin} + \chi_{1, 2}(r) N_{out; Q}^{nonlin} \right) \cdot  \begin{pmatrix} \bfe_{1} \\ \vdots \\ \bfY_{3} \end{pmatrix} \, \ud x,
\end{equation}
where $\td{N}^{nonlin}_{Q} = \td{N}^{nonlin}[(\br{h}_{Q}, \br{\pi}_{Q}), (\td{h}_{Q}, \td{\pi}_{Q})]$ and $N_{\Box; Q}^{nonlin} = (\hbox{RHS of \eqref{eq:constraint-pi}})$ with $(h, \pi) = (h_{\Box; Q}, \pi_{\Box; Q})$ with $\Box = in$ or $out$.

For the nonlinear contribution $n[Q]$, we claim that
\begin{align*}
\abs*{n[Q]} \aleq \eps^{2}, \quad
\nrm*{n[\cdot]}_{Lip} \aleq K \eps.
\end{align*}
We begin by recalling the expressions \eqref{eq:n0-3} and \eqref{eq:n4-9}. The terms that do not involve $\rd_{i'} \rd_{j'} h_{Q}^{ij}$ and $\rd_{i'} \pi_{Q}^{i j}$, where $(h_{Q}, \pi_{Q})$ may be $(\td{h}_{Q}, \td{\pi}_{Q})$, $(\br{h}_{Q}, \br{\pi}_{Q})$, $(h_{in; Q}, \pi_{in; Q})$, or $(h_{out; Q}, \pi_{out; Q})$, can be handled using the $L^{\infty}$ and $H^{1} \times L^{2}$ norms of $(h_{Q}, \pi_{Q})$ (which are all $\eps$-small), as well as the trivial observation that $1, x^{1}, \ldots, \bfe_{1}, \ldots, \bfY_{3}$ and their derivatives are uniformly bounded on $\td{A}_{1}$. For the terms containing such a factor, observe that this factor is always linear and we may integrate one derivative by parts off of it, after which the $L^{\infty}$ and $H^{1}$ norms of $(h_{Q}, \pi_{Q})$ again suffice. The Lipschitz bound is proved similarly, using in addition \eqref{eq:gluing-adm-lip}.

We are now ready to conclude this step. For the sake of concreteness, we focus on the case of exterior gluing from this point on; the case of interior gluing is similar. Then $\bfQ[(g_{in; Q}, k_{in; Q}); A_{1}] = \mathring{Q}$, whereas $\bfQ[(g_{out; Q}, k_{out; Q}); A_{1}] = Q$. Hence, \eqref{eq:tdh-charge-f}--\eqref{eq:tdpi-charge-f} reduce to a fixed point problem
\begin{equation} \label{eq:q-eq}
	Q = \mathring{Q} + n[Q].
\end{equation}
Moreover, we have shown that $\abs{n[Q]} \aleq \eps^{2}$ and $\nrm{n[Q]}_{Lip} \aleq K \eps$. {\bf We take $M_{c}$ to be the implicit constant in the first inequality} so that $\abs{n[Q]} \leq M_{c} \eps^{2}$. By hypothesis, $\dist(\mathring{Q}, \rd \calQ) > M_{c} \eps^{2}$. Hence, for $\eps_{c}$ sufficiently small (recall that $\eps < \eps_{c}$), the RHS of \eqref{eq:q-eq} is thus a contraction on $\set{Q : \abs{Q - \mathring{Q}} \leq M_{c} \eps^{2}} \subseteq \calQ$. It follows that a unique $Q \in \calQ$ satisfying \eqref{eq:q-eq} and $\abs{Q - \mathring{Q}} \leq M_{c} \eps^{2}$ exists by the Banach fixed point theorem.

\smallskip \noindent
{\bf Step~4: Conclusion of the proof.}
It remains to verify that, taking $\eps_{c} > 0$ smaller if necessary, the Lipschitz dependence and persistence of regularity properties hold. The Lipschitz dependence property is immediate from the Picard iteration (or Banach fixed point) schemes above; we omit the details. In case of persistence of regularity, we need to differentiate \eqref{eq:tdhpi-fixed-pt} and estimate $(\rd^{\alp} \td{h}_{Q}, \rd^{\alp} \td{\pi}_{Q})$ in $H^{s} \times H^{s-1}$ for $\abs{\alp} \leq m$. While achieving this bound is straightforward if $\eps_{c}$ is allowed to depend on $m$, we need to show that a single choice of $\eps_{c}$ works for all $m$. We sketch the necessary argument below.

From now on, we omit the subscript $Q$ since it remains fixed. We first consider a-priori bounds under the assumption that all objects are smooth. We take $\rd^{\alp}$ of \eqref{eq:tdhpi-fixed-pt} and rearrange the equations as
\begin{equation}\label{eq:persist-reg-hpi}
\begin{aligned}
	&(\rd^{\alp} \td{h}, \rd^{\alp} \td{\pi}) - \vec{S} \left( \uD_{(\td{h}, \td{\pi})} (\td{M}, \td{N})[(\rd^{\alp} \td{h}, \rd^{\alp} \td{\pi})] \right) \\ &= \vec{S}[\rd^{\alp} (F, G)] + [\rd^{\alp}, \vec{S}] (\td{M}, \td{N})(\td{h}, \td{\pi}) + (E_{\alp, \td{M}}, E_{\alp, \td{N}})[(\td{h}, \td{\pi})],
\end{aligned}
\end{equation}
where $\uD_{(\td{h}, \td{\pi})} (\td{M}, \td{N})[(\dot{h}, \dot{\pi})]$ is the linearization of $\td{M}$ around $(\td{h}, \td{\pi})$ applied to $(\dot{h}, \dot{\pi})$. (Here, $\br{h}_{ij}(x)$, $\br{\pi}_{ij}(x)$ are regarded as coefficients). Observe that we have put all highest order derivatives of $(\td{h}, \td{\pi})$ on the LHS. Proceeding as in Step~2, but also using $(S4)$ and $(T4)$, as well as Gagliardo--Nirenberg, we may show that
\begin{align*}
	\nrm{\hbox{RHS of \eqref{eq:persist-reg-hpi}}}_{H^{s} \times H^{s-1}}
	&\aleq (1+\eps)\nrm{(\br{h}, \br{\pi})}_{H^{s+m}\times H^{s+m-1}}  \\
	&\peq + \left[ \nrm{(\br{h}, \br{\pi})}_{H^{s+1}\times H^{s}}+\nrm{(\td{h}, \td{\pi})}_{H^{s+1}\times H^{s}} \right] \nrm{(\td{h}, \td{\pi})}_{H^{s+m-1}\times H^{s+m-2}},
\end{align*}
where $m = \abs{\alp}$. On the other hand, observe that the LHS of \eqref{eq:persist-reg-hpi} defines the same linear operator for $(\rd^{\alp} \td{h}, \rd^{\alp} \td{\pi})$ independent of $\alp$. Proceeding as in Step~2, for $\eps$ sufficiently small \emph{independent of $\alp$}, we obtain
\begin{align*}
	\nrm{\hbox{LHS of \eqref{eq:persist-reg-hpi}}}_{H^{s} \times H^{s-1}}
	\ageq \nrm{(\rd^{\alp} \td{h}, \rd^{\alp} \td{\pi})}_{H^{s} \times H^{s-1}}.
\end{align*}
{\bf We fix $\eps_{c} > 0$ so that this estimate holds.} At this point, it is straightforward to set up an induction scheme involving difference quotients to prove the persistence of regularity. \qedhere
\end{proof}

\section{Asymptotic flatness} \label{sec:af}
In this section, we collect some facts concerning asymptotic flat initial data sets, which will be useful in the remainder of this paper. As an application, we also establish Theorem~\ref{thm:corvino-schoen}.

\begin{lemma}[Annular restriction of asymptotically flat data] \label{lem:af}
Let $(g, k)$ be an $\alp$-asymptotically flat pair on $B_{R_{0}}^{c}$ solving \eqref{eq:constraint}, and let $r \in 2^{\bbZ}$.
Then the following holds.
\begin{enumerate}
\item The pair $(g^{(r)}, k^{(r)})$ solves \eqref{eq:constraint} on $\set{\abs{x} > r^{-1} R_{0}}$ and obeys
\begin{equation*}
\nrm{(g^{(r)} - \dlt, k^{(r)})}_{H^{s} \times H^{s-1}(\td{A}_{1})} \leq D_{0} r^{-\alp},
\end{equation*}
where $s$, $D_{0}$ and $\alp$ are from \eqref{eq:w-af}.

\item For $\alp > \frac{1}{2}$, $\bfE^{ADM}$ and $\bfP^{ADM}$ are well-defined for $(g, k)$. Moreover, for $r \geq R_{0}$ such that $D_{0} r^{-\alp} \leq 1$, we have
\begin{align}
	\abs{\bfE[(g^{(r)}, k^{(r)}); A_{1}] - r^{-1} \bfE^{ADM}} + \abs{\bfP[(g^{(r)}, k^{(r)}); A_{1}] - r^{-1} \bfP^{ADM}} &\aleq D_{0}^{2} r^{-2\alp}, \label{eq:af-ep}
\end{align}
where $\bfE^{ADM}$ and $\bfP^{ADM}$ are evaluated for $(g, k)$. For the remaining averaged charges, for $r \geq r_{0} \geq R_{0}$ such that $D_{0} r_{0}^{-\alp} \leq 1$, we have
\begin{equation}
\begin{aligned}
	\abs{(\bfC, \bfJ)[(g^{(r)}, k^{(r)}); A_{1}]} \aleq r^{-2} \abs{(\bfC^{r_{0}}, \bfJ^{r_{0}})}+ D_{0}^{2} r^{-2 \min\set{\alp, 1}} r_{0}^{2 \min\set{0, 1-\alp}}\log^{\dlt_{1}(\alp)} (\tfrac{r}{r_{0}}),
\end{aligned}
\end{equation}
where $\dlt_{1}(\alp) = 1$ when $\alp = 1$ and $0$ otherwise, and $(\bfC^{r_{0}}, \bfJ^{r_{0}}) := (\bfC, \bfJ)[(g, k); A_{r_{0}}]$.

\item Assume furthermore that $(g, k)$ also satisfies the $\alp_{-}$-\emph{parity} (or Regge--Teitelbaum) \emph{condition},
\begin{equation} \label{eq:parity}
\begin{aligned}
	\nrm{g_{ij}^{-}(x)}_{\dot{\calH}^{s}(\td{A}_{r})}
	+\nrm{\rd_{i} (g_{jk}^{-}(x))}_{\dot{\calH}^{s-1}(\td{A}_{r})}
	+\nrm{k_{ij}^{+}(x)}_{\dot{\calH}^{s-1}(\td{A}_{r})} & \leq D_{0} r^{-s+\frac{3}{2}-\alp_{-}}
\end{aligned}\end{equation}
for all $r \in 2^{\bbZ} \cap B_{R_{0}}^{c}$. If $\alp_{-} + \alp > 2$, then $\bfC_{i}^{ADM}$ and $\bfJ^{ADM}_{i}$ $(i=1, 2, 3)$ are well-defined for $(g, k)$. Moreover, for $r \geq R_{0}$ such that $D_{0} r^{-\alp} \leq 1$, we have
\begin{align*}
	\sum_{i} \abs{\bfC_{i}[(g^{(r)}, k^{(r)}); A_{1}] - r^{-2} \bfC^{ADM}_{i}} + \sum_{i} \abs{\bfJ_{i}[(g^{(r)}, k^{(r)}); A_{1}] - r^{-2} \bfJ^{ADM}_{i}} &\aleq D_{0}^{2} r^{-\alp-\alp_{-}} .
\end{align*}
where $\bfC^{ADM}_{i}$ and $\bfJ^{ADM}_{i}$ are evaluated for $(g, k)$.
\end{enumerate}
\end{lemma}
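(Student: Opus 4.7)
The plan is as follows. Part~(1) is essentially immediate from the definition of the invariant scaling \eqref{eq:scaling} (which preserves \eqref{eq:constraint} by direct computation) and the identity $\nrm{g^{(r)} - \dlt}_{H^s(\td{A}_1)} = r^{s-3/2}\nrm{g-\dlt}_{\dot{\calH}^s(\td{A}_r)}$ (and its analogue for $k^{(r)} = r\, k(r\cdot)$, tracking the additional factor of $r$ from the $k$-rescaling), both extracted directly from the definition of $\dot{\calH}^{s}(\td{A}_{r})$.

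For Part~(2), I would apply Lemma~\ref{lem:charge} together with \eqref{eq:constraint-h}--\eqref{eq:constraint-pi} to rewrite each averaged-charge difference $\bfQ[(g,k); A_{r_1}] - \bfQ[(g,k); A_{r_0}]$ as an integral of the quadratic-plus-higher-order nonlinearities $M(h,\pi), N(h,\pi)$ against the test functions $1, \bfe_i, x_k, \bfY_k$, smoothly cut off by $\chi_{r_0, r_1}(\abs{x})$. On each dyadic annulus $A_{r'}$, Part~(1) and the $\dot{\calH}^s$-scaling yield baseline $L^2$ bounds $\nrm{h}_{L^2(A_{r'})} \aleq D_0 (r')^{3/2-\alp}$ and $\nrm{(\rd h, \pi)}_{L^2(A_{r'})} \aleq D_0 (r')^{1/2-\alp}$. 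A single integration by parts converts $M^{(2)}_{h}(h, \rd^2 h)$ into terms of the form $\rd h \cdot \rd h$ and $h \cdot \rd h$ (plus a boundary contribution supported where $\chi$ varies); Cauchy--Schwarz then gives $\int_{A_{r'}} \abs{M} + \abs{N}\, \ud x \aleq D_0^2 (r')^{1-2\alp}$, with the Taylor-expansion corrections absorbed by extra $\nrm{h}_{L^\infty} \aleq D_0 (r')^{-\alp}$ factors (available since $s > 3/2$). For $\bfE, \bfP$ the test functions are $O(1)$, so the per-scale contribution is $D_0^2 (r')^{1-2\alp}$, whose dyadic sum converges geometrically since $\alp > 1/2$, yielding \eqref{eq:af-ep} after applying the scaling relation $\bfE[(g^{(r)},k^{(r)}); A_1] = r^{-1}\bfE[(g,k); A_r]$. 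For $\bfC, \bfJ$ the extra factor $\abs{x} \sim r'$ raises the per-scale exponent to $2 - 2\alp$; splitting the dyadic sum on $[r_0, r]$ into the three cases $\alp \lessgtr 1$ produces the stated bound with the logarithmic endpoint.

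For Part~(3), I would sharpen the $\bfC, \bfJ$ estimates from Part~(2) by exploiting parity. The cutoff $\chi_{r_0, r_1}(\abs{x})$ is radial and hence even, while each test function $x_k$ and $\bfY_k$ is linear in $x$ and hence odd. Decomposing each factor of $(h, \rd h, \pi)$ inside $M, N$ into its even and odd parts (using $(\rd F)^{\pm} = \rd(F^{\mp})$), every pure-parity bilinear product is an even function, and its integral against the odd test function over the symmetric cutoff region vanishes identically. Only mixed-parity products survive; by \eqref{eq:parity}, each such product contains exactly one factor carrying the improved rate $\alp_{-}$, with bounds $\nrm{h^-}_{L^2(A_{r'})} \aleq D_0 (r')^{3/2-\alp_{-}}$ and $\nrm{(\rd h)^+}_{L^2(A_{r'})} + \nrm{\pi^+}_{L^2(A_{r'})} \aleq D_0 (r')^{1/2-\alp_{-}}$. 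Cauchy--Schwarz then upgrades the per-scale contribution to $D_0^2 (r')^{2-\alp-\alp_{-}}$, whose dyadic geometric series converges iff $\alp + \alp_{-} > 2$ and is controlled by its largest term. Applying the $r^{-2}$ scaling yields the asserted bound and, in particular, the existence of $\bfC^{ADM}, \bfJ^{ADM}$ as Cauchy limits.

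The main obstacle is the parity bookkeeping in Part~(3): one must verify that the Taylor expansion of the $h$-dependent coefficient tensors $A(h)$ inside the schematic $M^{(\ast)}_{h}, N^{(\ast)}_{h}$ does not contaminate the parity structure of the leading quadratic piece. This is handled by expanding $A(h) = A(0) + h \cdot A'(0) + \cdots$: the constant part $A(0)$ is parity-blind, so the leading bilinear parity decomposition proceeds as above, while each additional factor of $h$ introduces an $\nrm{h}_{L^\infty} \aleq D_0 r^{-\alp}$ loss that comfortably preserves the decay margin $\alp + \alp_{-} > 2$ after resumming.
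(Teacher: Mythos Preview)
Your proposal is correct and follows essentially the same approach as the paper, which gives only a three-sentence sketch: Part~(1) is trivial from scaling, Part~(2) uses Lemma~\ref{lem:charge} and the nonlinear estimates as in Step~3 of Section~\ref{sec:gluing}, and Part~(3) uses the parity observation. Your write-up is in fact considerably more detailed than the paper's own proof.

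One small point worth tightening in Part~(3): your final paragraph handles the Taylor remainder in $A(h)$ by saying each extra factor of $h$ contributes an additional $\nrm{h}_{L^\infty}\aleq D_0 r^{-\alp}$. For this to close you still need that the \emph{all-slow} higher-order products (e.g.\ $h^{+}\cdot h^{+}\cdot \rd^{2}h^{+}$) are even and hence vanish against $x_k$, so that every surviving term again carries at least one fast factor; otherwise the crude cubic bound $D_0^{3}(r')^{2-3\alp}$ is not summable in the range $\tfrac12<\alp\leq\tfrac23$. This is true for the same reason as in the bilinear case (each extra $h^{+}$ factor is even, and the slow bilinear piece is already even), so the fix is immediate, but it should be stated rather than left implicit.
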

Part~(1) is trivial, Part~(2) is proved using Lemma~\ref{lem:charge} and proceeding as in Step~3 in Section~\ref{sec:gluing}, and Part~(3) follows from the observation that the contribution of $(h^{-}, \pi^{+})$ in Lemma~\ref{lem:charge} for $\bfC, \bfJ$ vanishes due to parity considerations. We omit the details as they are straightforward.

We also state the following quantitative facts concerning (exterior) Kerr initial data sets.
\begin{lemma} \label{lem:kerr-extr}
Let $\calE := \set{Q \in \bbR^{10} : \abs{\bfE(Q)} > \abs{\bf P(Q)}}$. For each $Q \in \calE$, there exists an exterior region of an initial data set for one of the Kerr spacetimes, denoted by $(g_{Q}^{Kerr}, k_{Q}^{Kerr})$, such that
\begin{align}
	\bfQ^{ADM}[(g_{Q}^{Kerr}, k_{Q}^{Kerr})] &= Q, \\
	\abs{x}^{n} \abs{\rd^{(n)}(g_{Q}^{Kerr} - \dlt)} +  \abs{x}^{n+1} \abs{\rd^{(n)} k_{Q}^{Kerr}} & \leq C_{D}(n, \gmm) \abs{\bfM} \abs{x}^{-1} , \label{eq:Kerr-af} \\
	\abs{x}^{n} \abs{\rd^{(n)} g_{Q}^{Kerr, -}} + \abs{x}^{n+1} \abs{\rd^{(n)} k_{Q}^{Kerr, +}} &\leq C_{D}(n, \gmm) \abs{\bfM} \abs{x}^{-2}, \label{eq:Kerr-parity} \\
	\abs{x}^{n} \abs{\rd^{(n)}\rd_{\bfE, \bfP} g_{Q}^{Kerr}} +  \abs{x}^{n+1} \abs{\rd^{(n)} \rd_{\bfE, \bfP} k_{Q}^{Kerr}} & \leq C_{D}(n, \gmm) \abs{x}^{-1}, \label{eq:Kerr-Lip-EP} \\
	\abs{x}^{n} \abs{\rd^{(n)}\rd_{\bfE, \bfP} g_{Q}^{Kerr, -}} +  \abs{x}^{n+1} \abs{\rd^{(n)} \rd_{\bfE, \bfP} k_{Q}^{Kerr, +}}  & \leq C_{D}(n, \gmm) (\abs{\bfM}^{-1} \abs{(\bfC, \bfJ)} + 1) \abs{x}^{-2}, \label{eq:Kerr-Lip-EP-parity} \\
	\abs{x}^{n} \abs{\rd^{(n)}\rd_{\bfC, \bfJ} g_{Q}^{Kerr}} +  \abs{x}^{n+1} \abs{\rd^{(n)} \rd_{\bfC, \bfJ} k_{Q}^{Kerr}} & \leq C_{D}(n, \gmm) \abs{x}^{-2}, \label{eq:Kerr-Lip-CJ}
\end{align}
for $\abs{x} \geq C_{R}(\gmm) (\abs{\bfM} + \abs{\bfM}^{-1} \abs{(\bfC, \bfJ)})$, where $\bfM = \sgn \bfE \sqrt{\bfE^{2} - \abs{\bfP}^{2}}$ and $\gmm = \tfrac{\abs{\bfE}}{\abs{\bfM}}$.
\end{lemma}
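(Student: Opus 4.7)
The plan is to construct $(g_Q^{Kerr}, k_Q^{Kerr})$ as the induced data on $\{x^0 = 0\}$ of a suitably boosted, translated, and rotated Kerr spacetime, and to verify all estimates from explicit Kerr--Schild formulas. First, I would set $\bfM = \sgn\bfE \sqrt{\bfE^2 - |\bfP|^2}$ (which is nonzero by the defining hypothesis of $\calE$, with $|\bfM| = |\bfE|/\gmm$) and choose $\Lambda \in SO^+(1,3)$ to be a Lorentz boost mapping $(\bfE, \bfP)$ to $(\bfM, 0)$, with boost factor $\gmm = |\bfE|/|\bfM|$. The starting point is the standard exterior Kerr spacetime of mass $\bfM$ and spin vector $\vec a$ (to be chosen) in Kerr--Schild form $\bfg = \bfeta + 2H\,\ell \otimes \ell$, which exactly solves \eqref{eq:eve}. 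I would then apply the Poincar\'e transformation $y^\mu = \Lambda^\mu{}_{\mu'}(x^{\mu'} - \xi^{\mu'})$ and define $(g_Q^{Kerr}, k_Q^{Kerr})$ on $\{y^0 = 0\}$ via the induced first and second fundamental forms.

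To match the charges, I would apply Lemma~\ref{lem:charge-spt} together with the transformation laws \eqref{eq:LT-killing-vf}: the ADM charges of the resulting data are an affine function of $(\vec a, \xi)$ which is smoothly invertible (for fixed $\Lambda$ and $\bfM$) with $|\vec a| + |\xi| \aleq_\gmm |\bfM|^{-1}|(\bfC, \bfJ)|$. Solving for $(\vec a, \xi)$ ensures $\bfQ^{ADM}[(g_Q^{Kerr}, k_Q^{Kerr})] = Q$. The asymptotic estimates \eqref{eq:Kerr-af} then follow from the elementary Kerr--Schild expansions $H = \bfM/r + O(\bfM |\vec a|^2/r^3)$ and $\ell^i = x^i/r + O(|\vec a|/r)$, combined with the restriction $|x| \geq C_R(\gmm)(|\bfM| + |\bfM|^{-1}|(\bfC, \bfJ)|)$ which keeps the point $y$ outside the Kerr horizon and far from the translated origin. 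The parity bounds \eqref{eq:Kerr-parity} rest on the observation that in centered Kerr--Schild coordinates, $g-\delta$ is even and $k$ is odd under $x \to -x$ at leading order (from $\ell^i \ell^j$ even, $\ell^0 \ell^i$ odd, and $H$ even in the Kerr radial function); the subleading parity mixing induced by translation, rotation, and boost is suppressed by an extra factor of $|x|^{-1}$.

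The Lipschitz bounds \eqref{eq:Kerr-Lip-EP}--\eqref{eq:Kerr-Lip-CJ} will be obtained by differentiating the construction in $Q$ via the chain rule through the parameters $(\bfM, \vec a, \xi, \Lambda)$. The $(\bfE, \bfP)$-derivatives act primarily on $(\bfM, \Lambda)$, contributing $\partial_\bfM(g - \delta) = O(1/r)$ and yielding \eqref{eq:Kerr-Lip-EP}; since the indirect contributions $\partial_{\bfE, \bfP}(\vec a, \xi) = O(|\bfM|^{-2}|(\bfC, \bfJ)|)$ enter the parity-odd (resp.\ parity-even) parts of $g$ (resp.\ $k$) only through dipole-type terms, the combined enhancement $|x|^{-2}(1 + |\bfM|^{-1}|(\bfC, \bfJ)|)$ in \eqref{eq:Kerr-Lip-EP-parity} follows. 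The $(\bfC, \bfJ)$-derivatives only hit $(\vec a, \xi)$, producing purely dipole-type $|x|^{-2}$ corrections and giving \eqref{eq:Kerr-Lip-CJ}.

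The main technical obstacle I anticipate is the careful uniform tracking of the dependence on the boost factor $\gmm$, which is allowed to be arbitrarily large: a large boost tilts the hypersurface $\{y^0 = 0\}$ sharply with respect to $\{x^0 = 0\}$, so the restriction on $|x|$ in terms of $\gmm$, $|\bfM|$, and $|(\bfC, \bfJ)|$ must be chosen to guarantee that the relevant pointwise bounds propagate across slices without generating spurious $\gmm$-dependent powers of $r$. This is a quantitative Lorentz-geometric exercise rather than a conceptual difficulty, and the resulting $\gmm$-dependence is packaged into the constants $C_D(n, \gmm)$ and $C_R(\gmm)$ of the statement.
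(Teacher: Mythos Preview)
Your proposal is correct and follows essentially the same route as the paper: construct the family by applying a rotation, spatial translation, and Lorentz boost to the Kerr metric in Kerr--Schild form, match the ADM charges via the known Poincar\'e transformation law (the paper cites \cite[App.~E--F]{ChrDel} directly for the explicit parametrization $M=\sgn\bfE\sqrt{\bfE^2-\abs{\bfP}^2}$, $\Lmb=\Lmb_{\bfE^{-1}\bfP}$, $a=M^{-1}\abs{\bfJ(\Lmb^{-1}Q)}$, $\xi=-M^{-1}\bfC(\Lmb^{-1}Q)$, rather than rederiving it from Lemma~\ref{lem:charge-spt}), and then obtain all pointwise and derivative bounds from the explicit Kerr--Schild expressions together with the chain rule through $(M,a,R,\xi,\Lmb)$. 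The only cosmetic difference is that the paper keeps the spin as a scalar $a$ plus a separate rotation $R\in SO(3)$, whereas you package these together as a spin vector $\vec a$; the resulting bounds on $\rd R/\rd Q$ and $\rd a/\rd Q$ versus $\rd\vec a/\rd Q$ are equivalent.
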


An elegant construction of such a family has been given in \cite{ChrDel} (via application of isometries of the background Minkowski metric to Kerr initial data sets), although the bounds \eqref{eq:Kerr-Lip-EP}--\eqref{eq:Kerr-Lip-CJ} are not explicitly established there. We sketch the proof of Lemma~\ref{lem:kerr-extr} in Appendix~\ref{sec:kerr-extr}.

\begin{proof}[Proof of Theorem~\ref{thm:corvino-schoen}]
Assume, without loss of generality, that $\frac{1}{2} < \alp < 1$. Let $(\mathring{g}, \mathring{k})= (g_{in}^{(r)}, k_{in}^{(r)})$, where $r > 0$ will be fixed at the end. By Lemma~\ref{lem:af}.(1)--(2),
\begin{equation} \label{eq:extr-r}
\begin{aligned}
	\nrm{(g_{in}^{(r)} - \dlt, k_{in}^{(r)})}_{H^{s} \times H^{s-1}(\td{A}_{1})} &\leq D_{0} r^{-\alp}, \\
	(\bfE, \bfP)[(g_{in}^{(r)}, k_{in}^{(r)}); A_{1}] &= r^{-1} (\bfE^{ADM}, \bfP^{ADM})[(g_{in}, k_{in})] + \calO(D_{0}^{2} r^{-2\alp}), \\
	\abs{(\bfC, \bfJ)[(g_{in}^{(r)}, k_{in}^{(r)}); A_{1}]} &\aleq D_{0} r^{-2} r_{0}^{2-\alp} + D_{0}^{2} r^{-2 \alp},
\end{aligned}
\end{equation}
where $r_{0} = D_{0}^{\frac{1}{\alp}}$ (so that $D_{0} r_{0}^{-\alp} = 1$).
Define $\eps = D_{0} r^{-\alp}$ and $\calQ = B_{M_{c} \eps^{2}}(\mathring{Q})$ with $\mathring{Q} = \bfQ[(g^{(r)}_{in}, k^{(r)}_{in}); A_{1}]$ so that \eqref{eq:gluing-epss} and \eqref{eq:gluing-q} trivially hold. We shall construct a $\calQ$-admissible family from Lemma~\ref{lem:kerr-extr} by rescaling and reparametrizing $Q$. Observe that, by \eqref{eq:Kerr-af}--\eqref{eq:Kerr-parity} and Lemma~\ref{lem:af}.(1)--(3),
\begin{equation} \label{eq:extr-kerr-Q}
\begin{aligned}
	\nrm{(g_{Q}^{Kerr (r)} - \dlt, k_{Q}^{Kerr (r)})}_{H^{s} \times H^{s-1}(\td{A}_{1})} &\leq C_{s, \gmm} \abs{\bfM(Q)} r^{-1}, \\
	(\bfE, \bfP)[(g_{Q}^{Kerr (r)}, k_{Q}^{Kerr (r)}); A_{1}] &= r^{-1} (\bfE, \bfP)(Q)   + \calO_{\gmm}(\abs{\bfM(Q)}^{2} r^{-2}), \\
	(\bfC, \bfJ)[(g_{Q}^{Kerr (r)}, k_{Q}^{Kerr (r)}); A_{1}] &= r^{-2} (\bfC, \bfJ)(Q) + \calO_{\gmm}(\abs{\bfM(Q)}^{2} r^{-3}),
\end{aligned}
\end{equation}
where $\gmm = \frac{\abs{\bfE(Q)}}{\abs{\bfM(Q)}}$, as long as
\begin{equation} \label{eq:extr-kerr-r}
r \geq C_{\gmm} (\abs{\bfM(Q)} + \abs{\bfM(Q)}^{-1} \abs{(\bfC, \bfJ)(Q)}),
\end{equation}
Similarly, by \eqref{eq:Kerr-af}--\eqref{eq:Kerr-Lip-CJ} and Lemma~\ref{lem:af}.(1)--(3),
\begin{align}
	&\bfQ[(g_{Q}^{Kerr (r)}, k_{Q}^{Kerr (r)}); A_{1}]
	- \bfQ[(g_{Q'}^{Kerr (r)}, k_{Q'}^{Kerr (r)}); A_{1}] \label{eq:extr-kerr-DltQ} \\
	&=(r^{-1} (\bfE, \bfP)(Q - Q'), r^{-2} (\bfC, \bfJ)(Q - Q')) + \calO_{\gmm_{\max}} \left( M_{\max} r^{-1} \abs{(r^{-1} (\bfE, \bfP)(Q - Q'), r^{-2} (\bfC, \bfJ)(Q - Q'))} \right) \notag
\end{align}
for $\gmm_{\max} \geq \max\set{\gmm(Q), \gmm(Q')}$, $M_{\max} \geq \max \set{\abs{\bfM(Q)}, \abs{\bfM(Q')}}$ and $r$ satisfying \eqref{eq:extr-kerr-r} with respect to $Q, Q'$.

Consider
\begin{equation*}
	\calE_{1}^{(r)} = \set*{Q \in \calE : \tfrac{1}{2} \bfE(Q) < \bfE^{ADM}[(g_{in}, k_{in})] < 2 \bfE(Q), \, \gmm(Q) < 2 \gmm^{ADM}[(g_{in}, k_{in})], \, \abs{(\bfC, \bfJ)}(Q) \leq C D_{0}^{2} r^{2-2\alp}},
\end{equation*}
where $\gmm^{ADM} = ((\bfE^{ADM})^{2} - \abs{\bfP^{ADM}}^{2})^{-\frac{1}{2}}\bfE^{ADM}$ and $C$ is larger than the implicit constant in the last bound in \eqref{eq:extr-r}. Since $\alp > \frac{1}{2}$, there exists $R_{1} \geq R_{0}$ such that \eqref{eq:extr-kerr-r} is satisfied for all $r \geq R_{1}$ and $Q \in \calE_{1}^{(r)}$. In particular, by \eqref{eq:extr-kerr-DltQ}, the map $T^{(r)} : \calE_{1}^{(r)} \to \bbR^{10}$, $Q \mapsto \bfQ[(g_{Q}^{Kerr(r)}, k_{Q}^{Kerr(r)}); A_{1}]$ is one-to-one and bi-Lipschitz if $r \geq R_{1}$. Moreover, in view of \eqref{eq:extr-r} and \eqref{eq:extr-kerr-Q}, as well as the inverse function theorem, we may ensure that $T^{(r)}(\calE_{1}^{(r)})$ contains $\calQ = B_{M_{c} \eps^{2}}(\mathring{Q})$ if $r$ is sufficiently large. Hence, inverting $T^{(r)}$ and composing with $(g_{Q}^{Kerr (r)}, k_{Q}^{Kerr (r)})$, we produce a $\calQ$-admissible family on $A_{1}$. Finally, taking $r$ even larger if necessary, we obtain $\eps < \eps_{c}$ and $C_{D}(\lceil s \rceil, \gmm) \eps < \eps_{c}$, so Theorem~\ref{thm:gluing-annulus} can be applied. \qedhere
\end{proof}

\section{Obstruction-free gluing} \label{sec:obs-free}
In this section, we prove the obstruction-free gluing theorems stated in Section~\ref{subsec:obs-free-results}.

\subsection{Outline of the proof} \label{subsec:obs-free-overview}
Our proof consists of the following steps.

\subsubsection{Construction of a single localized bump} \label{subsubsec:single-bump}
At the heart of our proof is the following construction. Given $\tht \in (0, \frac{\pi}{2})$, $\bfomg \in \bbS^{2}$, $b > 0$, $t > 0$, $\ell \in \bbR^{3}$ and $\xi \in \bbR^{3}$ such that $\br{B_{b}(\xi)} \subseteq C_{\tht}(\bfomg) \cap A_{8}$, we construct a smooth solution $(g_{t, \ell, \xi}, k_{t, \ell, \xi})$ to \eqref{eq:constraint} on $\bbR^{3}$ that
\begin{enumerate}[label=(\roman*)]
\item equals $(\dlt, 0)$ outside $C_{\tht}(\bfomg) \cap \set{\abs{x} > 8}$, and
\item attains the following (averaged) charges $\Dlt Q \in \bbR^{10}$ in an outer annulus (say, $A_{16}$):
\begin{equation*}
	(\bfE, \bfP_{i}, \bfC_{i}, \bfJ_{i})[(g_{t, \ell, \xi}, k_{t, \ell, \xi}); A_{16}] = (\gmm(\ell) t^{2}, \gmm(\ell) t^{2} \ell_{i}, \gmm(\ell) t^{2} \xi_{i}, \gmm(\ell) t^{2} \tensor{\eps}{_{i}^{j k}} \xi_{j} \ell_{k})  + \cdots,
\end{equation*}
where $\gmm(\ell) = (1-\abs{\ell}^{2})^{-\frac{1}{2}}$ and we omitted terms that are smaller than $t^{2}$ provided that $t \ll b \ll 1$.
\end{enumerate}
Intuitively, $(g_{t, \ell, \xi}, k_{t, \ell, \xi})$ may be thought of as bump (or particle) with the mass, velocity and position $t^{2}$, $\ell$ and $\xi$, respectively, with characteristic scale $b$.

A key difficulty in this construction is the fact that any smooth solution $(\alp, \bt)$ on $\bbR^{3}$ to the linearization of \eqref{eq:constraint} around $(\dlt, 0)$ has zero charges (measured on any $A_{r}$) due to the conservation laws, Lemma~\ref{lem:charge}. Hence, (ii) must arise from the properties of the nonlinearity of \eqref{eq:constraint}. Another difficulty lies in ensuring the support property (i) while achieving (ii).

With these difficulties in mind, let us first consider the zero velocity (or time-symmetric) case $\ell = 0$. Let $\mathring{\alp}_{ij}$ be a smooth symmetric $2$-tensor that satisfies $\mathring{\alp} = \calO(t)$, $\supp \mathring{\alp} \subseteq B_{b}(0)$ and
\begin{equation*}
\rd^{i} (\mathring{\alp}_{ij} - \dlt_{ij} \tr_{\dlt} \mathring{\alp})= 0,
\end{equation*}
which is stronger than $(\mathring{\alp}, 0)$ solving the linearized constraint equation (i.e., $\rd^{i} \rd^{j} (\mathring{\alp}_{ij} - \dlt_{ij} \tr_{\dlt} \mathring{\alp})= 0$). Suppose that $(g_{t, 0, 0}, k_{t, 0, 0})$ is a solution to \eqref{eq:constraint} such that $(g_{t, 0, 0}, k_{t, 0, 0}) = (\dlt + \mathring{\alp}, 0) + \calO(t^{2})$. A key \emph{nonlinear} computation, which is similar to that of Bartnik in \cite{Bar}, shows that the charges of $(g_{t, 0, 0}, k_{t, 0, 0})$ are determined by an explicit expression in $\mathring{\alp}$ up to leading order:
\begin{equation*}
(\bfE, \bfP_{i}, \bfC_{i}, \bfJ_{i})[(g_{t, 0, 0}, k_{t, 0, 0}); A_{r_{0}}(\xi_{0})] = \bb( \tfrac{1}{16} \int \sum_{j, k, \ell} (\rd_{k} \mathring{\alp}_{j \ell} - \rd_{\ell} \mathring{\alp}_{j k})^{2} \, \ud x, 0, 0, 0 \bb) + \cdots,
\end{equation*}
where $\supp \mathring{\alp} \subseteq B_{r_{0}}(\xi_{0})$. By normalizing $\mathring{\alp}$, we may ensure that $\bfE[(g_{t, 0, 0}, k_{t, 0, 0}); A_{r}] = t^{2}$.

To construct such a solution $(g_{t, 0, 0}, k_{t, 0, 0})$ with a good support property, we use the conic solution operator $\vec{S}_{c}$ from \cite{MaoTao} (see Section~\ref{subsec:conic}) to set up a Picard iteration argument. By selecting the convolution kernel of $\vec{S}_{c}$ to be supported in $C_{\tht}(\bfomg)$, we may ensure that $\supp (g_{t, 0, 0}, k_{t, 0, 0}) \subseteq \cup_{x \in \supp \mathring{\alp}} (C_{\tht}(\bfomg) + x)$. Applying the spatial translation $x \mapsto x - \xi$ (with $b$ and $\xi$ as above), we obtain $(g_{t, 0, \xi}, k_{t, 0, \xi})$ with properties (i) and (ii).

To handle the case $\ell \neq 0$, the idea is to utilize Lorentz boosts, which now requires thinking about objects defined on spacetime (as opposed to spacelike hypersurface). For solutions to the linearization of \eqref{eq:eve} around $\bfeta$, an elegant proof of the transformation properties of the charges under Lorentz boosts can be given using the Hamiltonian \eqref{eq:U-X} associated with (continuous) isometries of $\bfeta$ introduced by Chru\'sciel \cite{Chr1}. One way to extend this property to our nonlinear setting is to first consider the Cauchy development of $(g_{t, 0, \xi}, k_{t, 0, \xi})$ with respect to \eqref{eq:eve} using the Choquet-Bruhat theorem \cite{ChBr}, then controlling the nonlinear error terms in the proof of the transformation property (cf.~\cite[Appendix~E]{ChrDel} in the case of the ADM charges). The upside is that this procedure produces a boosted pair $(\td{g}_{t, \ell, \xi}, \td{k}_{t, \ell, \xi})$ attaining the desired charges (ii). The downside, however, is that the support property (i) is difficult to keep, particularly for $\abs{\ell}$ close to $1$.

To avoid this issue, we instead consider the Cauchy development $\bfalp$ of $(\mathring{\alp}, 0)$ with respect to the \emph{linearized} Einstein vacuum equation (around $\bfeta$), obtain the boosted pair $(\alp_{t, \ell, \xi}, \bt_{t, \ell, \xi})$ solving the linearized constraint equation (around $(\dlt, 0)$), then upgrade it using the conic solution operator $\vec{S}_{c}$ to a solution $(g_{t, \ell, \xi}, k_{t, \ell, \xi})$ to \eqref{eq:constraint} with the desired support property (i). Next, we apply the above Lorentz boost argument in reverse for the Cauchy development of $(g_{t, \ell, \xi}, k_{t, \ell, \xi})$ with respect to \eqref{eq:eve}. The key observation is that the resulting un-boosted (also un-translated by $\xi$) pair still obeys $(\td{g}_{t, 0, 0}, \td{k}_{t, 0, 0}) = (\dlt + \mathring{\alp}, 0) + \calO(t^{2})$, which is sufficient for the charge computations to go through.

\subsubsection{Multi-bump configurations with prescribed charges} \label{subsubsec:multi-bumps}
The next step of the proof is to put together multiple bumps obtained in \S \ref{subsubsec:single-bump} -- which may be arranged to have pairwise disjoint supports -- to construct a smooth solution $(g_{\Dlt Q}^{{\rm bump}; \Gmm}, k_{\Dlt Q}^{{\rm bump}; \Gmm})$ to \eqref{eq:constraint} on $\bbR^{3}$ that attains the (averaged) charges $\Dlt Q \in \bbR^{10}$ as measured in the annulus $A_{16}$ for any $\Dlt Q =  (\Dlt \bfE, \ldots, \Dlt \bfJ_{3})$ satisfying
\begin{equation*}
	\Dlt \bfE > \abs{\Dlt \bfP}, \,
	\tfrac{\Dlt \bfE}{\sqrt{(\Dlt \bfE)^{2} - \abs{\Dlt \bfP}^{2}}} < 2 \Gmm, \,
	\Dlt\bfE < \eps_{b}^{2}, \,
	\abs{\Dlt \bfC} + \abs{\Dlt \bfJ} < \mu_{b} \Dlt \bfE,
\end{equation*}
with $\eps_{b}$, $\mu_{b}$ sufficiently small depending on $\Gmm$. Importantly, this range is larger than \eqref{eq:obs-free-unit:EP}--\eqref{eq:obs-free-unit:CJ} ($\eps_{o}, \mu_{o}$ will be chosen small compared to $\eps_{b}, \mu_{b}$).

There are many possible ways to achieve this goal. Our configuration consists of six bumps -- see Figure~\ref{fig:bump}. Two of these bumps are placed symmetrically (relative to $0$) on the $x^{3}$ axis and carry most of $\bfE$ and all of $\bfP$ while contributing zero $\bfC$ and $\bfJ$. The remaining four are placed roughly symmetrically (relative to $0)$ on the $x^{1}$ and $x^{2}$ axes, are almost time-symmetric (i.e., small $\abs{\ell}$) and attain the desired values of $\bfC$ and $\bfJ$ while contributing zero $\bfP$. We remark that the $\bfP$, $\bfC$ and $\bfJ$ of the latter quadruple is well-approximated by the total linear momentum, center of mass and angular momentum of the system of 4 point particles in Newtonian mechanics in the same configuration.

\subsubsection{Extension procedures and proof of Theorem~\ref{thm:obs-free-unit}} \label{subsubsec:obs-free-unit}
Extending $(g_{out}, k_{out})$ to $\td{A}_{16}$ (which is straightforward; see Lemma~\ref{lem:ext-ingoing}) and applying Theorem~\ref{thm:gluing-annulus} (gluing up to linear obstructions), the proof of Theorem~\ref{thm:obs-free-unit} is reduced to constructing an admissible family of extensions $(g_{\Dlt Q}, k_{\Dlt Q})$ (in the outward direction) of $(g_{in}, k_{in})$ with
\begin{equation*}
\bfQ[(g_{\Dlt Q}, k_{\Dlt Q}]; A_{16}) \approx \bfQ[(g_{in}, k_{in}]; A_{1}) + \Dlt Q
\approx \bfQ[(g_{out}, k_{out}]; A_{32})
\end{equation*}
up to errors of order $\calO(c \Dlt \bfE)$ with $c = c(\Gmm)$ small. To achieve this goal, we first extend $(g_{in} - \dlt, k_{in})$ outward to $(\td{g}_{in} - \dlt, \td{k}_{in})$ using Carlotto--Schoen-type techniques (but based on conic- and Bogovskii-type solution operators as in \cite{MaoTao}), which can be arranged to have disjoint supports from $(g_{\Dlt Q}^{{\rm bump}; \Gmm}, k_{\Dlt Q}^{{\rm bump}; \Gmm})$; see Figure~\ref{fig:bump}. We then simply superpose $(\td{g}_{in} - \dlt, \td{k}_{in})$ on $(g_{\Dlt Q}^{{\rm bump}; \Gmm}, k_{\Dlt Q}^{{\rm bump}; \Gmm})$ to produce the desired pair $(g_{\Dlt Q}, k_{\Dlt Q})$.

\subsubsection{Theorem~\ref{thm:obs-free-unit} $\imp$ Theorem~\ref{thm:obs-free-af}} \label{subsubsec:obs-free-af} This is a simple rescaling argument (cf.~Section~\ref{sec:af}).

\begin{remark}[Comparison with \cite{CziRod}]
Apart from the apparent differences -- such as null vs.~spacelike gluing, spherical vs.~annular data and the use of a Lorentz boost to optimize the range of $\Dlt \bfP$ -- perhaps the biggest difference between our approach and \cite{CziRod} is the absence of extra oscillations in the bumps, which correspond to, in the context of \cite{CziRod}, the ``nonlinear corrections'' for ``gluing'' the charges. Roughly speaking, instead of separation in frequencies as in \cite{CziRod}, we rely on separation in physical space to preclude dangerous interactions. Our use of conic solution operators (also Bogovskii operators for extension procedures) to ensure that the multi-bump configuration and $(\td{g}_{in} -\dlt, \td{k}_{in})$ have disjoint supports is an instance of this idea. Placing the gluing annulus $A_{16}$ outside the support of the leading order part $(\alp_{t, \ell, \xi}, \bt_{t, \ell, \xi})$ of all bumps is another (at a more technical level, note the improved bounds \eqref{eq:size-extr-bump-Gmm}, \eqref{eq:Lip-extr-bump-Gmm} in this region).
\end{remark}

In the rest of this section, we carry out the proof of Theorems~\ref{thm:obs-free-unit} and \ref{thm:obs-free-af} outlined above.

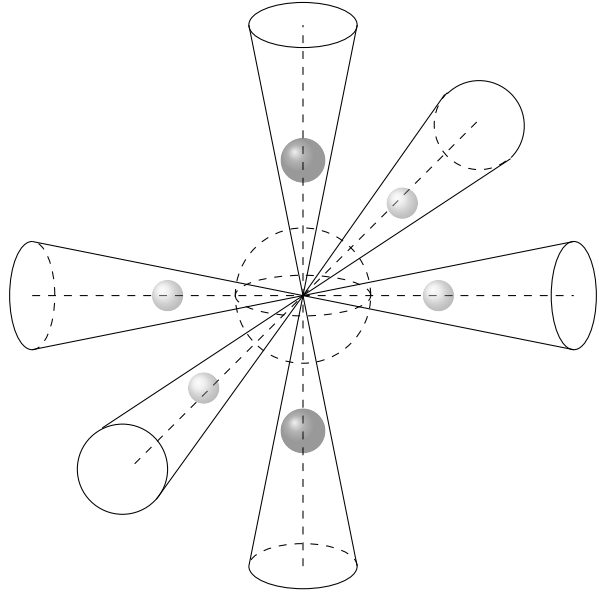
\begin{figure}
    \centering
    \begin{tikzpicture}[scale=0.6]
    \coordinate (O) at (0,0,0);
    \draw[dashed] (xyz cs:x=-6) -- (xyz cs:x=6);
    \draw[dashed] (xyz cs:y=-6) -- (xyz cs:y=6);
    \draw[dashed] (xyz cs:z=-10) -- (xyz cs:z=10);

\draw plot[variable=\t, domain=0:6](\t,0.2*\t);
\draw plot[variable=\t, domain=0:6](\t,-0.2*\t);
\draw(6,0) ellipse (0.5 and 1.2);

  \shade[ball color = gray!40, opacity = 0.4] (3,0) circle (0.35);

\draw plot[variable=\t, domain=0:6](-\t,0.2*\t);
\draw plot[variable=\t, domain=0:6](-\t,-0.2*\t);
\draw(-6,1.2) arc (90:270:0.5 and 1.2);
\draw[dashed] (-6,-1.2) arc (-90:90:0.5 and 1.2);

  \shade[ball color = gray!40, opacity = 0.4] (-3,0) circle (0.35);

\draw plot[variable=\t, domain=0:6](0.2*\t,\t);
\draw plot[variable=\t, domain=0:6](-0.2*\t,\t);
\draw(0,6) ellipse (1.2 and 0.5);

  \shade[ball color = black, opacity = 0.4] (0,3) circle (0.5);

\draw plot[variable=\t, domain=0:6](0.2*\t,-\t);
\draw plot[variable=\t, domain=0:6](-0.2*\t,-\t);
\draw (-1.2,-6) arc (180:360:1.2 and 0.5);
\draw[dashed] (1.2,-6) arc (0:180:1.2 and 0.5);

  \shade[ball color = black, opacity = 0.4] (0,-3) circle (0.5);

\draw plot[variable=\t, domain=0:4.6](\t,0.66*\t);
\draw plot[variable=\t, domain=0:3.14](\t,1.42*\t);
\draw[dashed] (3.2,4.5) arc (135:315: 1);
\draw (4.61,3.06) arc (-45:135: 1);

  \shade[ball color = gray!40, opacity = 0.4] (2.2,2.05) circle (0.35);

\draw plot[variable=\t, domain=0:4.45](-\t,-0.66*\t);
\draw plot[variable=\t, domain=0:3.25](-\t,-1.39*\t);
\draw(-4,-3.85) circle (1);

  \shade[ball color = gray!40, opacity = 0.4] (-2.2,-2.05) circle (0.35);

\draw[dashed] (0,0) circle (1.5);
\draw[dashed] (-1.5,0) arc (180:360:1.5 and 0.45);
\draw[dashed] (1.5,0) arc (0:180:1.5 and 0.45);

\end{tikzpicture}
    \caption{Six-bump configuration. The two darker balls represent the two bumps carrying most of $\bfE$ and all of $\bfP$, while the four lighter balls represent the four bumps that give $\bfC$ and $\bfJ$. The six-bump solution is supported inside the union of the six cones and away from the inner dashed ball. The extension $(\td{g}_{in} - \delta, \td{k}_{in})$ of $(g_{in} - \delta, k_{in})$ will be designed to have disjoint support from the six-bump solution. The gluing annulus $A_{16}$ sits outside of all the bumps.}
    \label{fig:bump}
\end{figure}

\subsection{Construction of a single localized bump}
The main goal of this subsection is to prove the following.
\begin{proposition} \label{prop:bump-single}
Fix any $-1 < \dlt < - \frac{1}{2}$, $\tht \in (0, \frac{\pi}{2})$ and $\bfomg \in \bbS^{2}$. For any $s > \frac{3}{2}$, $b > 0$ and $\Gmm > 1$, there exists $\eps_{b, 0} = \eps_{b, 0}(b, \Gmm) > 0$ such that the following holds. Given $t > 0$, $\ell \in \bbR^{3}$ and $\xi \in \bbR^{3}$ obeying
\begin{equation*}
t < \eps_{b, 0}, \quad
1 \leq \gmm(\ell) := (1-\abs{\ell}^{2})^{-\frac{1}{2}} < \Gmm, \quad
\br{B_{b}(\xi)} \subseteq C_{\tht}(\bfomg) \cap A_{8},
\end{equation*}
there exists a solution $(g_{t, \ell, \xi}, k_{t, \ell, \xi}) \in C^{\infty}(\bbR^{3})$ to \eqref{eq:constraint} satisfying
\begin{equation} \label{eq:bump-single-supp}
	\supp (g_{t, \ell, \xi} - \dlt, k_{t, \ell, \xi}) \subseteq C_{\tht}(\bfomg) \cap \set{\abs{x} > 8}, \quad
\end{equation}
and, for $n = 0, 1$,
\begin{align}
\nrm{(t \rd_{t}, \rd_{\ell}, \rd_{\xi})^{(n)}(g_{t, \ell, \xi} - \dlt, k_{t, \ell, \xi})}_{\Hb^{s, \dlt} \times \Hb^{s-1, \dlt+1}} &\aleq_{s, b, \Gmm} t, \label{eq:bump-single-est} \\
	(t \rd_{t}, \rd_{\ell}, \rd_{\xi})^{(n)} \big[ \bfE[(g_{t, \ell, \xi}, k_{t, \ell, \xi}); A_{16}] - \gmm(\ell) t^{2} \big] &=  \calO_{b, \Gmm}(t^{3}), \label{eq:bump-single-E} \\
	(t \rd_{t}, \rd_{\ell}, \rd_{\xi})^{(n)} \big[ \bfP_{i}[(g_{t, \ell, \xi}, k_{t, \ell, \xi}); A_{16}] - \gmm(\ell) t^{2} \ell_{i} \big] &= \calO_{b, \Gmm}(t^{3}), \label{eq:bump-single-P} \\
	(t \rd_{t}, \rd_{\ell}, \rd_{\xi})^{(n)} \big[ \bfC_{i}[(g_{t, \ell, \xi}, k_{t, \ell, \xi}); A_{16}] - \gmm(\ell) t^{2} \xi_{i} \big] &= \calO(b t^{2}) + \calO_{b, \Gmm}(t^{3}), \label{eq:bump-single-C} \\
	(t \rd_{t}, \rd_{\ell}, \rd_{\xi})^{(n)} \big[ \bfJ_{i}[(g_{t, \ell, \xi}, k_{t, \ell, \xi}); A_{16}] - \gmm(\ell) t^{2} \tensor{\eps}{_{i}^{jk}} \xi_{j} \ell_{k} \big] &= \calO(b t^{2}) + \calO_{b, \Gmm}(t^{3}). \label{eq:bump-single-J}
\end{align}
Moreover, the following improved bound holds in $\set{\abs{x} > 16}$ (for $n = 0, 1$):
\begin{equation} \label{eq:bump-single-imp}
	\nrm{(t \rd_{t}, \rd_{\ell}, \rd_{\xi})^{(n)}(g_{t, \ell, \xi} - \dlt, k_{t, \ell, \xi})}_{\Hb^{s, \dlt} \times \Hb^{s-1, \dlt+1}(\set{\abs{x} > 16})} \aleq_{s, b, \Gmm} t^{2}.
\end{equation}
\end{proposition}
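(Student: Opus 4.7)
The plan is to build $(g_{t, \ell, \xi}, k_{t, \ell, \xi})$ in two stages: first a ``linear seed'' $(\alp, \bt) := (\alp_{t, \ell, \xi}, \bt_{t, \ell, \xi})$ solving the constraint linearized around $(\dlt, 0)$, obtained by Lorentz boosting a fixed time-symmetric Cauchy datum; then a nonlinear correction produced by the conic solution operators of Lemma~\ref{lem:conic}. To set this up, fix once and for all a smooth symmetric $2$-tensor $\mathring{\alp} \in C^{\infty}_{c}(B_{1})$ satisfying the (stronger than linearized constraint) condition $\rd^{i}(\mathring{\alp}_{ij} - \dlt_{ij} \tr_{\dlt} \mathring{\alp}) = 0$ and normalized by
\begin{equation*}
\tfrac{1}{16} \int_{\bbR^{3}} \sum_{j, k, \ell} (\rd_{k} \mathring{\alp}_{j \ell} - \rd_{\ell} \mathring{\alp}_{j k})^{2} \, \ud x = 1.
\end{equation*}
Solve the linearized Einstein vacuum equation $\uD_{\bfeta} \bfG[\dot{\bfg}] = 0$ on $\bbR^{1+3}$ from Cauchy data $(t \mathring{\alp}(b^{-1}(\cdot - \xi)), 0)$ on $\Sgm_{0} = \set{x^{0} = 0}$ (existence and finite propagation speed being classical), apply the Lorentz boost $\Lmb_{\ell} \in SO^{+}(1, 3)$ of velocity $\ell$ in the ambient canonical coordinates, and define $(\alp, \bt)$ to be the induced Cauchy data on the new slice $\set{y^{0} = 0}$ via \eqref{eq:induced-data-lin}. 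By boost invariance of the linearized equation, $(\alp, \bt)$ solves the linearized constraints, is compactly supported in a set of diameter $\aleq_{\Gmm} b$ about $\xi$, and obeys $\nrm{(\alp, \bt)}_{H^{s} \times H^{s-1}} \aleq_{s, b, \Gmm} t$ together with the analogous Lipschitz bounds in $(t, \ell, \xi)$.

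\textbf{Nonlinear upgrade preserving conic support.}
Write $(g_{t, \ell, \xi}, k_{t, \ell, \xi}) = (\dlt + \alp + \td{h}, \bt + \td{\pi})$, and solve for $(\td{h}, \td{\pi})$ using the Picard iteration scheme of Step~2 in the proof of Theorem~\ref{thm:gluing-annulus} but with the Bogovskii-type operators $(S, T)$ replaced by the conic operators $(S_{c}, T_{c})$ of Lemma~\ref{lem:conic}, whose convolution kernel is supported in a sufficiently narrow cone $C_{\td{\omg}}(\bfomg_{0})$. Because $(\alp, \bt)$ already solves the linearized constraint, the source for $(\td{h}, \td{\pi})$ is purely quadratic in $(\alp, \bt, \td{h}, \td{\pi})$ and hence of size $\calO(t^{2})$; iteration then converges in $\Hb^{s, \dlt} \times \Hb^{s-1, \dlt+1}$ for $t < \eps_{b, 0}(s, b, \Gmm)$ small. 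Since $\br{B_{b}(\xi)} \sbeq C_{\tht}(\bfomg) \cap A_{8}$, we may choose the aperture of $C_{\td{\omg}}(\bfomg_{0})$ small enough that $\supp(\alp, \bt) + C_{\td{\omg}}(\bfomg_{0}) \sbeq C_{\tht}(\bfomg) \cap \set{\abs{x} > 8}$, yielding \eqref{eq:bump-single-supp}; the bound \eqref{eq:bump-single-est} follows immediately. In the region $\set{\abs{x} > 16}$ only the conic tail $(\td{h}, \td{\pi})$ reaches, so its $\calO(t^{2})$ bound gives the improved estimate \eqref{eq:bump-single-imp}.

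\textbf{Charge computation (the main obstacle).}
The subtle point is that $(\alp, \bt)$ is compactly supported strictly inside $A_{16}$, so by Lemma~\ref{lem:charge} all of its linear charges vanish; the entire right-hand side of \eqref{eq:bump-single-E}--\eqref{eq:bump-single-J} must therefore originate from the nonlinear terms in \eqref{eq:constraint-h}--\eqref{eq:constraint-pi}, and moreover must be pushed through the Lorentz boost in a controlled way. To extract it, I would evolve $(g_{t, \ell, \xi}, k_{t, \ell, \xi})$ to a spacetime solution of \eqref{eq:eve} via the Choquet-Bruhat theorem \cite{ChBr}, then apply the inverse Lorentz boost $\Lmb_{\ell}^{-1}$ together with spatial translation by $-\xi$ in the ambient Minkowski coordinates, and read off the induced data $(\td{g}, \td{k})$ on the new slice $\set{y^{0} = 0}$. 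The key assertion, mirroring Bartnik's computation in \cite{Bar}, is that
\begin{equation*}
(\td{g}, \td{k}) = (\dlt + t\, \mathring{\alp}(b^{-1} \cdot), 0) + \calO_{b, \Gmm}(t^{2}) \quad \hbox{in } H^{s} \times H^{s-1}.
\end{equation*}
Applying Lemma~\ref{lem:charge} to $(\td{g}, \td{k})$ then isolates the quadratic-in-$\mathring{\alp}$ contribution, which by the normalization above gives $\bfE = t^{2} + \calO_{b, \Gmm}(t^{3})$, while time-symmetry and the fact that $\supp \mathring{\alp} \sbeq B_{b}$ force $\bfP = \calO_{b, \Gmm}(t^{3})$, $\bfC_{i} = \calO(b t^{2}) + \calO_{b, \Gmm}(t^{3})$, $\bfJ_{i} = \calO(b t^{2}) + \calO_{b, \Gmm}(t^{3})$. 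Finally, invoking the Killing vector field transformation \eqref{eq:LT-killing-vf} together with Lemma~\ref{lem:charge-spt} and the divergence identity \eqref{eq:U-X-div} to push these charges forward under $(\Lmb_{\ell}, \xi)$ produces precisely the advertised leading terms $(\gmm(\ell) t^{2}, \gmm(\ell) t^{2} \ell_{i}, \gmm(\ell) t^{2} \xi_{i}, \gmm(\ell) t^{2} \tensor{\eps}{_{i}^{jk}} \xi_{j} \ell_{k})$. The Lipschitz dependence in $(t, \ell, \xi)$ is then obtained by differentiating both the Picard fixed point equation and the charge identity in $(t, \ell, \xi)$, mirroring the $Q$-dependence arguments from the proof of Theorem~\ref{thm:gluing-annulus}.
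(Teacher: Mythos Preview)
Your overall strategy---boost a time-symmetric linear seed, upgrade nonlinearly with the conic operators, then recover the charges by evolving under \eqref{eq:eve} and un-boosting---is exactly the paper's, and the charge computation via the Bartnik-type identity (Lemma~\ref{lem:bump-en}) combined with the Poincar\'e transformation law (Lemma~\ref{lem:bump-poincare}) is the correct mechanism. There are, however, two concrete scaling errors in your seed that break the stated conclusions.

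First, with the seed $t\,\mathring{\alp}(b^{-1}(\cdot - \xi))$ and your normalization of $\mathring{\alp}$, the Bartnik integral scales as
\[
\tfrac{1}{16}\int \sum_{j,k,\ell} \bigl(\rd_k [t\mathring{\alp}_{j\ell}(b^{-1}x)] - \rd_\ell [t\mathring{\alp}_{jk}(b^{-1}x)]\bigr)^2 \,\ud x = t^2 b^{-2}\cdot b^{3} = t^{2} b,
\]
so your construction delivers $\bfE \approx \gmm(\ell)\, t^{2} b$, not $\gmm(\ell)\, t^{2}$. The paper avoids this by building $\mathring{\alp}$ from second derivatives of a rescaled potential $\chi_{b,\Gmm}$ whose $L^{2}$-norm at the level of $\rd\mathring{\alp}$ is invariant under the rescaling.

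Second, a seed supported in $B_{b}(\xi)$ on $\{x^{0}=0\}$ does \emph{not} yield a boosted linear datum supported in $B_{b}(\xi)$. By finite speed of propagation, on the tilted slice $\{y^{0}=0\}$ (where $|x^{0}| \leq |\ell|\,|x|$) the support in $x$ has radius up to $(1-|\ell|)^{-1} b \leq 2\gmm^{2} b < 2\Gmm^{2} b$. Since the hypothesis only guarantees $\br{B_{b}(\xi)} \subseteq C_{\tht}(\bfomg)\cap A_{8}$, for large $\Gmm$ the boosted seed need not lie in the cone (nor in $A_{8}$), so the conic operator cannot be applied and \eqref{eq:bump-single-supp} fails. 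The paper fixes this by taking the unboosted seed of radius $b_{0}=\tfrac{1}{2}\Gmm^{-2}b$ centered at the \emph{origin}, and translating by $\xi$ only \emph{after} the boost (i.e., $y^{\mu}=\tensor{(\Lmb_{\ell})}{^{\mu}_{\nu}}x^{\nu}+\bfxi^{\mu}$); the length-contraction computation in Step~2 then shows the boosted seed sits inside $B_{b}(\xi)$, and the bump is genuinely centered at $\xi$ so that the $\bfC$, $\bfJ$ formulas come out as stated.
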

We remark that $B_{b}(\xi)$ contains the support of the seed bump, or more precisely the $\calO(t)$ part of $(g_{t, \ell, \xi}, k_{t, \ell, \xi})$; the reason behind the improvement \eqref{eq:bump-single-imp} is that $\set{\abs{x} > 16}$ is disjoint from $B_{b}(\xi)$.

To establish Proposition~\ref{prop:bump-single}, we need some preliminary lemmas.
\begin{lemma}[Charge computation for special initial data] \label{lem:bump-en}
Given $r_{0} \in 2^{\bbN_{0}}$, $\xi_{0} \in \bbR^{3}$, $t, b_{0}, A > 0$ and $\tau \in (-\dlt_{0}, \dlt_{0})$ for some $\dlt_{0} > 0$, let $(g, k) = (g(\tau), k(\tau)) \in C^{2} \times C^{1}(B_{2r_{0}}(\xi_{0}))$ be a solution to \eqref{eq:constraint} satisfying, for $n = 0, 1$,
\begin{equation*}
	\nrm{\rd_{\tau}^{n} (g_{ij} - \dlt_{ij} - \mathring{\alp}_{ij}, k_{ij})}_{C^{2} \times C^{1}(B_{2r_{0}}(\xi_{0}))} \leq A t^{2},
\end{equation*}
where $\mathring{\alp}_{ij} = \mathring{\alp}(\tau)_{ij}$ obeys the special condition
\begin{equation} \label{eq:bump-en-alp-special}
	\rd^{i} (\mathring{\alp}_{ij} - \dlt_{ij} \tr_{\dlt} \mathring{\alp}) = 0,
\end{equation}
as well as the size and support properties (for $n = 0, 1$ and $n' = 0, 1, 2$)
\begin{equation} \label{eq:bump-en-alp}
\abs{\rd_{\tau}^{n} (b_{0} \rd_{x})^{(n')} \mathring{\alp}} \leq A b_{0}^{\frac{3}{2}} t, \quad \supp \mathring{\alp} \subseteq B_{b_{0}}(0).
\end{equation}
Provided that $\br{B_{b_{0}}(0)} \subseteq B_{r_{0}}(\xi_{0})$, we have, for $n = 0, 1$,
\begin{align*}
	\rd_{\tau}^{n} \bb[ \bfE[(g, k); A_{r_{0}}(\xi_{0})] - \tfrac{1}{16} \int \sum_{j, k, \ell} (\rd_{k} \mathring{\alp}_{j \ell} - \rd_{\ell} \mathring{\alp}_{j k})^{2} \, \ud x \bb] &=  \calO_{A, b_{0}}(t^{3}), \\
	\rd_{\tau}^{n} \bfP_{i}[(g, k); A_{r_{0}}(\xi_{0})] &= \calO_{A, b_{0}}(t^{3}), \\
	\rd_{\tau}^{n} \bfC_{i}[(g, k); A_{r_{0}}(\xi_{0})] &= \calO(A^{2} b_{0} t^{2}) + \calO_{A, b_{0}}(t^{3}), \\
	\rd_{\tau}^{n} \bfJ_{i}[(g, k); A_{r_{0}}(\xi_{0})] &= \calO_{A, b_{0}}(t^{3}).
\end{align*}
\end{lemma}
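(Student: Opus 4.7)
The plan is to convert each charge into a bulk integral via Stokes's theorem, substitute the constraint equations \eqref{eq:constraint-h}--\eqref{eq:constraint-pi} to isolate nonlinear contributions, and exploit the strong divergence-free condition \eqref{eq:bump-en-alp-special} on $\mathring{\alpha}$. First, proceeding as in the proof of Lemma~\ref{lem:charge} but in coordinates centered at $\xi_{0}$ and averaging against $\eta_{r_{0}}(|x-\xi_{0}|)$, we obtain, for each charge evaluated on $A_{r_{0}}(\xi_{0})$,
\begin{equation*}
\bfE = \tfrac{1}{2} \int \chi\, \rd_{i} \rd_{j} h^{ij}\, \ud x, \quad
\bfC_{k} = \tfrac{1}{2} \int \chi\, x_{k}\, \rd_{i} \rd_{j} h^{ij}\, \ud x, \quad
\bfP_{i} = \int \chi\, \rd_{j} \pi^{ij}\, \ud x, \quad
\bfJ_{k} = \int \chi\, Y^{i}_{k}\, \rd_{j} \pi^{ij}\, \ud x,
\end{equation*}
where $\chi = \chi(|x - \xi_{0}|)$ with $\chi(s) := \int_{s}^{\infty} \eta_{r_{0}}(r')\, \ud r'$ equals $1$ on $[0, r_{0}]$ and vanishes outside $[0, 2 r_{0}]$, and $(h, \pi)$ is related to $(g, k)$ via \eqref{eq:hpi-def}. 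In particular, since $\br{B_{b_{0}}(0)} \subseteq B_{r_{0}}(\xi_{0})$, one has $\chi \equiv 1$ on a neighborhood of $\supp \mathring{\alpha}$.

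For $\bfP_{i}$ and $\bfJ_{k}$, each term on the right-hand side of \eqref{eq:constraint-pi} contains an explicit factor of $\pi$ or $\rd \pi$, both of which are $\calO(t^{2})$ by hypothesis, while the companion factor is bounded by $|h| + |\rd h| \aleq A b_{0}^{1/2} t + t^{2}$ on $B_{b_{0}}(0)$ and by $\calO(t^{2})$ outside. Integrating over $B_{2 r_{0}}(\xi_{0})$ (using $|Y^{i}_{k}| \aleq r_{0}$ on $\supp \chi$ for $\bfJ_{k}$) immediately yields $\bfP_{i}, \bfJ_{k} = \calO_{A, b_{0}}(t^{3})$.

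The computation of $\bfE$ is the Bartnik-type heart of the proof. Introduce $a_{ij} := \mathring{\alpha}_{ij} - \dlt_{ij} \tr_{\dlt} \mathring{\alpha}$ and write $h = a + R_{h}$ with $R_{h} = \calO_{C^{2}}(t^{2})$; the condition \eqref{eq:bump-en-alp-special} reads exactly $\rd^{i} a_{ij} = 0$, so the linear double-divergence $\rd_{i} \rd_{j} a^{ij}$ vanishes identically. After substituting \eqref{eq:constraint-h}, the term $M^{(0)}_{h}(\pi, \pi)$ is pointwise $\calO(t^{4})$, and integrating once by parts in $M^{(2)}_{h}(h, \rd^{2} h)$ (legitimate since $\chi$ is compactly supported) produces a remainder on $\supp \rd \chi$ of size $\calO_{r_{0}}(t^{4})$ (since $h = \calO(t^{2})$ there), a cubic correction of size $\calO_{A, b_{0}}(t^{3})$ coming from differentiating the $h$-dependent coefficients, and a renormalized quadratic form $\chi \bar{M}^{(1)}_{h}(\rd h, \rd h)$ that combines with $\chi M^{(1)}_{h}(\rd h, \rd h)$. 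Replacing $h \mapsto a + R_{h}$ and absorbing all cross terms into the $\calO_{A, b_{0}}(t^{3})$ error gives
\begin{equation*}
\bfE = \tfrac{1}{2} \int \bar{M}^{(1)}_{0}(\rd a, \rd a)\, \ud x + \calO_{A, b_{0}}(t^{3}).
\end{equation*}
The analogous argument for $\bfC_{k}$ carries the extra factor $|x_{k}| \leq b_{0}$ on $\supp a$, while the contributions from $B_{2 r_{0}}(\xi_{0}) \setminus B_{b_{0}}(0)$ are $\calO_{r_{0}}(t^{4})$; this yields $\bfC_{k} = \calO(A^{2} b_{0} t^{2}) + \calO_{A, b_{0}}(t^{3})$. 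The $\rd_{\tau}^{n}$ bounds for $n = 1$ follow by differentiating under the integral sign, since every hypothesis is uniformly stated for $\rd_{\tau}(g, k, \mathring{\alpha})$.

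\emph{Main obstacle.} The technical heart of the argument is the explicit tensorial identity
\begin{equation*}
\tfrac{1}{2} \int \bar{M}^{(1)}_{0}(\rd a, \rd a)\, \ud x = \tfrac{1}{16} \int \sum_{j, k, \ell} (\rd_{k} \mathring{\alpha}_{j \ell} - \rd_{\ell} \mathring{\alpha}_{j k})^{2}\, \ud x,
\end{equation*}
whose verification relies on the classical second-order expansion of the scalar curvature about $\dlt$ together with repeated use of $\rd^{i} a_{ij} = 0$ to eliminate all non-square cross terms. This is precisely the nonlinear identity making the Bartnik-type positivity argument of \cite{Bar} applicable in our setting; without the strong form \eqref{eq:bump-en-alp-special} of the linearized constraint, non-trivial linear contributions would survive and obstruct the positivity of the leading expression.
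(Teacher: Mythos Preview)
Your proposal is correct and follows essentially the same route as the paper: convert the averaged charges into bulk integrals of $\rd_i\rd_j h^{ij}$ and $\rd_i\pi^{ij}$ against a cutoff that equals $1$ on $\supp\mathring{\alp}$, substitute the constraint equations \eqref{eq:constraint-h}--\eqref{eq:constraint-pi}, kill $\bfP,\bfJ$ via the explicit factor of $\pi$ or $\rd\pi$, and reduce $\bfE,\bfC$ to the quadratic part of the scalar curvature of $\dlt+\mathring{\alp}$. The only presentational difference is that the paper carries out the quadratic scalar-curvature expansion explicitly via the Christoffel formula (arriving at $R[\dlt+\mathring{\alp}]=-\tfrac{1}{8}(\rd_k\mathring{\alp}_{i\ell}-\rd_\ell\mathring{\alp}_{ik})^2+\rd(\mathring{\alp},\rd\mathring{\alp})+\calO(t^3)$ after using $\rd^i\mathring{\alp}_{ij}=\rd_j\tr_\dlt\mathring{\alp}$), whereas you work in the schematic $M^{(i)}_h$ notation and state the resulting tensorial identity at the end; both arrive at the same Bartnik-type expression.
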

\begin{proof}
For simplicity, we drop the $\tau$-dependence; we leave the similar proof of the $C^{1}$-dependence on $\tau$ to the reader. Define $(h, \pi) := ((g - \dlt) - \dlt \tr_{\dlt} (g-\dlt), k - \dlt \tr_{\dlt} k)$ and $(h_{1}, \pi_{1}) := (\mathring{\alp} - \dlt \tr_{\dlt} \mathring{\alp}, 0)$. Using the notation in \eqref{eq:constraint-abbrev}, the charges are given by
\begin{align*}
	&\begin{pmatrix} \bfE \\  \bfC_{k} \end{pmatrix}[(g, k); A_{r_{0}}] =-\frac{1}{2}\int \chi_{r_{0}, 2 r_{0}}(x-\xi_{0}) \rd_{i} \rd_{j} h^{ij} \begin{pmatrix} 1 \\ x_{k} \end{pmatrix} \, \ud x=-\frac{1}{2}\int  \chi_{r_{0}, 2 r_{0}}(x - \xi_{0}) M(h, \pi)\begin{pmatrix} 1 \\ x_{k} \end{pmatrix} \, \ud x\\
 &=- \frac{1}{2} \int M(h_{1}, \pi_{1})\begin{pmatrix} 1 \\ x_{k} \end{pmatrix} \, \ud x +\mathcal{O}_{A, b_{0}} (t^3)=- \frac{1}{2} \int R[\dlt + \mathring{\alp}] \begin{pmatrix} 1 \\ x_{k} \end{pmatrix} \, \ud x +\mathcal{O}_{A, b_{0}}(t^3),
\end{align*}
\begin{align*}
 &\begin{pmatrix} \bfP_{k} \\  \bfJ_{k} \end{pmatrix}[(g, k); A_{r_{0}}]=
 -\int \sum_{j}  \chi_{r_{0}, 2r_{0}}(x - \xi_{0}) \rd_{i} \pi^{ij} \begin{pmatrix} \bfe_{k}^{j}  \\ \bfY_{k}^{j} \end{pmatrix} \, \ud x =
 -\int \sum_{j}  \chi_{r_{0}, 2 r_{0}}(x - \xi_{0})  N^{j}(h, \pi) \begin{pmatrix} \bfe_{k}^{j} \\ \bfY_{k}^{j} \end{pmatrix} \, \ud x \\
 &=-\int \sum_{j}  N^j(h_{1}, \pi_{1}) \begin{pmatrix} \bfe_{k}^{j} \\ \bfY_{k}^{j} \end{pmatrix} \, \ud x+\mathcal{O}_{A, b_{0}}(t^3)= \mathcal{O}_{A, b_{0}}(t^3),
\end{align*}
where we used the fact that $\rd_{i} \rd_{j} h_{1}^{i j} = 0$, $\pi_{1} = 0$ and $\supp (h_{1}, \pi_{1}) \subseteq B_{r_{0}}(\xi_{0})$.

The above already implies the desired assertions for $\bfP_{k}$ and $\bfJ_{k}$. For $\bfE$ and $\bfC_{k}$, recall the general formulae
\begin{align*}
    R[g]=(g^{-1})^{ij}(\partial_k\Gamma_{ij}^k-\partial_j\Gamma_{ik}^k+\Gamma_{k \ell}^k\Gamma_{ij}^{\ell}-\Gamma_{i \ell}^k\Gamma_{jk}^{\ell}),\quad \Gamma_{ij}^k=\frac{1}{2} (g^{-1})^{k \ell}(\partial_i g_{j \ell}+\partial_j g_{i \ell}-\partial_{\ell} g_{ij}).
\end{align*}
Let $(g_{1})_{ij} :=\delta_{ij}+\mathring{\alpha}_{ij}$ and write $(g_{1}^{-1})^{ij}=\delta^{ij}+\td{\alpha}^{ij}$, where we observe that $\td{\alp}^{ij} = -\mathring{\alp}_{ij} + \calO_{A, b_{0}}(t^{2})$. We will abbreviate terms that are linear in $\mathring{\alp}$ by ${\rm lin}$ -- note that they will all vanish since $(\mathring{\alp}, 0)$ is chosen to solve the linearized constraint equation. We have
\begin{align*}
    R[g_{1}]&=\frac{1}{2}(g_{1}^{-1})^{ij}\partial_k\td{\alp}^{k \ell}(\partial_i \mathring{\alp}_{j \ell}+\partial_j \mathring{\alp}_{i \ell}-\partial_{\ell} \mathring{\alp}_{ij})+\frac{1}{2}(g_{1}^{-1})^{ij}(g_{1}^{-1})^{k \ell}\partial_k(\partial_i \mathring{\alp}_{j \ell}+\partial_j \mathring{\alp}_{i \ell}-\partial_{\ell} \mathring{\alp}_{ij})\\
    &\peq -\frac{1}{2}(g_{1}^{-1})^{ij}\partial_j\td{\alp}^{k \ell}\partial_i \mathring{\alp}_{k \ell}-\frac{1}{2}(g_{1}^{-1})^{ij}(g_{1}^{-1})^{k \ell}\partial_j\partial_i \mathring{\alp}_{k \ell}+(g_{1}^{-1})^{ij}(\Gamma_{k \ell}^k\Gamma_{ij}^{\ell}-\Gamma_{i \ell}^k\Gamma_{jk}^{\ell}) \\
    &=\frac{1}{2}\partial_k(\td{\alp}^{k \ell}(2\partial_i \mathring{\alp}_{i \ell}-\partial_{\ell} \mathring{\alp}_{ii}))+\frac{1}{2}\td{\alp}^{ij}\partial_k(2\partial_i \mathring{\alp}_{jk}-\partial_k \mathring{\alp}_{ij})-\frac{1}{2}\partial_i(\td{\alp}^{k \ell}\partial_i \mathring{\alp}_{k \ell})-\frac{1}{2}\td{\alp}^{ij}\partial_i\partial_j \mathring{\alp}_{kk}
    \\
    &\peq +\frac{1}{4}\partial_{\ell} \mathring{\alp}_{kk}(2\partial_i \mathring{\alp}_{i \ell}-\partial_{\ell} \mathring{\alp}_{ii})-\frac{1}{4}(\partial_i \mathring{\alp}_{k \ell})^2+\frac{1}{4}(\partial_k \mathring{\alp}_{i \ell}-\partial_{\ell} \mathring{\alp}_{ik})^2+{\rm lin}+\calO_{A, b_{0}}(t^3).
\end{align*}
To simplify the nonlinear terms, we use the special condition \eqref{eq:bump-en-alp-special}, which implies
\begin{align*}
    \partial^i \mathring{\alp}_{ij}=\partial_j \tr_\delta  \mathring{\alp},\quad  \partial^i\partial^j   \mathring{\alp}_{ij}=\Delta\tr_\delta \mathring{\alp}.
\end{align*}
Recall also that ${\rm lin} = \uD_{\dlt} R[\mathring{\alp}] = 0$, $g_{1} = \dlt + \mathring{\alp}$ and $\td{\alp}^{ij} = - \alp_{ij} + \calO_{A, b_{0}}(t^{2})$. Thus
\begin{align*}
    R[\dlt + \mathring{\alp}]&=\left(1-\frac12+\frac{1}{2}-\frac{1}{4}-\frac{1}{2}\right)(\partial_j\tr_\delta\mathring{\alp})^2+\left(-\frac{1}{2}-\frac{1}{4}+\frac{1}{2}\right)(\partial_k \mathring{\alp}_{ij})^2+\partial(\mathring{\alp},\partial\mathring{\alp})+ \calO_{A, b_{0}}(t^3)\\
    &=\frac{1}{4}(\partial_j\tr_\delta\mathring{\alp})^2-\frac{1}{4}(\partial_k\mathring{\alp}_{ij})^2+\partial(\mathring{\alp},\partial\mathring{\alp})+ \calO_{A, b_{0}}(t^3)\\
    &=-\frac{1}{8}(\partial_k \mathring{\alp}_{i \ell}-\partial_{\ell} \mathring{\alp}_{ik})^2+\partial(\mathring{\alp},\partial\mathring{\alp})+ \calO_{A, b_{0}}(t^3).
\end{align*}
Plugging in this formula into the above formulae for $\bfE$ and $\bfC_{k}$, and using the fact that $\abs{x_{k}} \leq b_{0}$ in $\supp \mathring{\alp} \subseteq B_{b_{0}}(0)$, the desired assertions for $\bfE$ and $\bfC_{k}$ follow. \qedhere
\end{proof}

To state the next lemma, consider a Lorentzian metric $\bfg \in C^{2}(\bfOmg)$, where $\bfOmg \subseteq \bbR^{1+3}$. Given  $(\Lmb, \bfxi) \in SO^{+}(1, 3) \times \bbR^{1+3}$, let $y^{\mu} = \tensor{\Lmb}{^{\mu}_{\mu'}} x^{\mu'} + \bfxi^{\nu}$. Then, with respect to the coordinates $(y^{0}, \ldots, y^{3})$, define
\begin{equation} \label{eq:induced-data}
	g^{(\Lmb, \bfxi)}_{ij} = \bfg_{ij} \bb|_{\set{y^{0} = 0} \cap \bfOmg}, \quad
	k^{(\Lmb, \bfxi)}_{ij} = \frac{-(\bfg^{-1})^{0 \mu}}{2 \sqrt{-(\bfg^{-1})^{00}}} \left( \rd_{\mu} \bfg_{i j} - \rd_{i} \bfg_{j \mu} - \rd_{j} \bfg_{i \mu} \right) \bb|_{\set{y^{0} = 0} \cap \bfOmg}.
\end{equation}
Motivated by Lemma~\ref{lem:charge-spt}, we introduce the notation
\begin{equation} \label{eq:spt-averge-charges}
	(\bbP_{0}, \bbP_{i}, \bbJ_{i 0}, \bbJ_{jk})[(g, k); A_{r_{0}}(\xi_{0})] = (\bfE, \bfP_{i}, \bfC_{i}, \tensor{\eps}{^{i}_{jk}} \bfJ_{i})[(g, k); A_{r_{0}}(\xi_{0})].
\end{equation}

\begin{lemma}[Effect of boost and translation] \label{lem:bump-poincare}
Given $t, A > 0$ and $\tau \in (-\dlt_{0}, \dlt_{0})$ for some $\dlt_{0} > 0$, let $\bfg = \bfg(\tau) \in C^{2}(\bfOmg)$ be a solution to \eqref{eq:eve} satisfying, for $n = 0, 1$,
\begin{equation*}
	\nrm{\rd_{\tau}^{n}(\bfg_{\mu \nu} - \bfeta_{\mu \nu} - \bfalp_{\mu \nu})}_{C^{2}(\bfOmg)} \leq A t^{2},
\end{equation*}
where $\bfalp_{\mu \nu} = \bfalp(\tau)_{\mu \nu}$ obeys
\begin{equation*}
	\mathrm{D}_{\bfeta} \bfG [\bfalp] = 0, \quad \nrm{\rd_{\tau} \bfalp}_{C^{2}(\bfOmg)} \leq A t.
\end{equation*}
\begin{enumerate}
\item For any $C^{1}$-curve $(\Lmb(\tau), \bfxi(\tau)) \in SO^{+}(1, 3) \times \bbR^{1+3}$ with $\abs{\rd_{\tau} (\Lmb, \bfxi)} \leq A$, we have, for $n = 0, 1$,
\begin{equation*}
	\nrm{\rd_{\tau}^{n} (g^{(\Lmb, \bfxi)}_{ij} - \dlt_{ij} - \alp^{(\Lmb, \bfxi)}, k^{(\Lmb, \bfxi)}_{ij} - \bt^{(\Lmb, \bfxi)})_{ij} )}_{C^{2}(\bfOmg)} = \calO_{A}(t^{2}),
\end{equation*}
where
\begin{equation*}
\alp^{(\Lmb, \bfxi)}_{i j} = \bfalp_{ij} \bb|_{\set{y^{0} = 0}}, \quad
\bt^{(\Lmb, \bfxi)}_{i j} = \frac{1}{2} \left( \rd_{0} \bfalp_{i j} - \rd_{i} \bfalp_{j 0} - \rd_{j} \bfalp_{i 0} \right) \bb|_{\set{y^{0} = 0}}
\end{equation*}
with respect to $y^{\mu} = \tensor{\Lmb}{^{\mu}_{\nu}} x^{\nu} + \bfxi^{\mu}$.

\item Let $(g, k) = (g^{(I, 0)}, k^{(I, 0)})$. Assume that, for each $\sgm \in [0, 1]$, there exists hypersurface $U_{\sgm} \subseteq \bfOmg$ such that $\rd U_{\sgm} = \set{x^{0} = 0, \abs{x} = (1+\sgm) r_{0}} \cup \set{y^{0} = 0, \abs{y} = (1+\sgm) r}$, $U_{\sgm} \cap \supp \bfalp = \0$, and $\mathrm{Vol}(U_{\sgm}) \leq A$, with the volume induced from the auxiliary Riemannian metric $(\ud x^{0})^{2} + \cdots (\ud x^{3})^{2}$. Then for $n = 0, 1$,
\begin{gather*}
	\rd_{\tau}^{n} \left[ \bbP_{\mu}[(g^{(\Lmb, \bfxi)}, k^{(\Lmb, \bfxi)}); A_{r}] - \tensor{\Lmb}{_{\mu}^{\mu'}} \bbP_{\mu'}[(g, k); A_{r_{0}}] \right] = \calO_{A}(t^{4}), \\
	\rd_{\tau}^{n} \left[ \bbJ_{\mu \nu}[(g^{(\Lmb, \bfxi)}, k^{(\Lmb, \bfxi)}); A_{r}] - \left(\tensor{\Lmb}{_{\mu}^{\mu'}} \tensor{\Lmb}{_{\nu}^{\nu'}} \bbJ_{\mu' \nu'} + \bfxi_{\mu} \tensor{\Lmb}{_{\nu}^{\nu'}} \bbP_{\nu'} - \bfxi_{\nu} \tensor{\Lmb}{_{\mu}^{\mu'}} \bbP_{\mu'} \right)[(g, k); A_{r_{0}}] \right] = \calO_{A}(t^{4}).
\end{gather*}
\end{enumerate}
\end{lemma}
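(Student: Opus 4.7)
For Part~(1), the plan is a direct expansion of the nonlinear formula \eqref{eq:induced-data} about the linear formula \eqref{eq:induced-data-lin} applied to $\bfalp$. The spatial identity $g_{ij}^{(\Lmb, \bfxi)} - \dlt_{ij} - \alp_{ij}^{(\Lmb, \bfxi)} = (\bfg - \bfeta - \bfalp)_{ij}\bb|_{y^0 = 0}$ is immediate and gives the required $\calO(t^2)$ bound. For the extrinsic curvature, the key algebraic observation is that
\[
\frac{-(\bfg^{-1})^{0\mu}}{2\sqrt{-(\bfg^{-1})^{00}}} - \tfrac{1}{2}\dlt^\mu_0 = \calO(\bfg - \bfeta) = \calO(t),
\]
so $k_{ij}^{(\Lmb, \bfxi)} - \bt_{ij}^{(\Lmb, \bfxi)}$ decomposes into (i) a term of the form $\calO(t) \cdot \calO(\rd \bfg) = \calO(t^2)$ from the above deviation applied to $\rd \bfg$, and (ii) a remainder $\calO(\rd(\bfg - \bfeta - \bfalp)) = \calO(t^2)$ in the symmetrized derivative expression. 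The $\tau$-derivative statement follows by the product rule using $\rd_\tau \bfalp = \calO(t)$ and $\rd_\tau(\bfg - \bfeta - \bfalp) = \calO(t^2)$ in $C^2$.

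Turning to Part~(2), the plan is to express both sides as integrals of the Hamiltonian $\star {}^{(\bfX)}\bbU[\bfr]$ (with $\bfr := \bfg - \bfeta$) over the two components of $\rd U_\sgm$, then apply Stokes via \eqref{eq:U-X-stokes}. On each sphere in $\rd U_\sgm$ one has $\bfalp = 0$ (by the disjointness hypothesis and continuity), so $\bfr = \bfg - \bfeta$ there; with $(\dot g, \dot k)$ denoting the linearized induced data from $\bfr$ via \eqref{eq:induced-data-lin}, one checks that $\dot g = g - \dlt$ identically while $k - \dot k = \calO(\bfr)\cdot\calO(\rd \bfr) = \calO(t^4)$. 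Hence Lemma~\ref{lem:charge-spt} applied to $\bfr$ yields
\[
\bbP_\mu[(g, k); \set{x^0 = 0, \abs{x} = (1+\sgm) r_0}] = \int_{\set{x^0 = 0, \abs{x} = (1+\sgm)r_0}} \star {}^{(\rd_{x^\mu})}\bbU[\bfr] + \calO_A(t^4),
\]
and the analogue at the boosted slice, in which the $\set{y^\mu}$ serve as canonical coordinates, follows from $\rd_{y^\mu} = \tensor{\Lmb}{_\mu^{\mu'}} \rd_{x^{\mu'}}$ in \eqref{eq:LT-killing-vf} together with the linearity of ${}^{(\bfX)}\bbU$ in $\bfX$.

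The key nonlinear cancellation lies in the bulk: since $\bfalp$ vanishes on $U_\sgm$, one has $\bfg = \bfeta + \bfr$ and $\bfG[\bfg] = 0$, so expanding gives $\uD_\bfeta \bfG[\bfr] = -\calN[\bfr, \rd \bfr, \rd^2 \bfr]$ with $\calN$ at least quadratic in its arguments; using $\nrm{\bfr}_{C^2(U_\sgm)} \leq A t^2$ and $\mathrm{Vol}(U_\sgm) \leq A$, this produces a bulk contribution $\calO_A(t^4)$. Applying \eqref{eq:U-X-stokes} to ${}^{(\rd_{x^{\mu'}})}\bbU[\bfr]$ on $U_\sgm$, averaging over $\sgm \in [0, 1]$ against $\eta$, and combining with the sphere identities above yields the claimed formula for $\bbP_\mu$. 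The $\bbJ_{\mu \nu}$ case is identical upon replacing $\rd_{x^{\mu'}}$ by $x_{\mu'} \rd_{x^{\nu'}} - x_{\nu'} \rd_{x^{\mu'}}$ and invoking the full transformation law in \eqref{eq:LT-killing-vf}, which automatically produces both $\tensor{\Lmb}{_\mu^{\mu'}} \tensor{\Lmb}{_\nu^{\nu'}} \bbJ_{\mu'\nu'}$ and the translation-$\bfxi$ corrections. The $n = 1$ statement follows by differentiating $\bfG[\bfeta + \bfr(\tau)] = 0$: since $\bfalp(\tau) \equiv 0$ on $U_\sgm$ forces $\rd_\tau \bfalp = 0$ there, this yields $\uD_\bfeta \bfG[\rd_\tau \bfr] = \calO(\bfr) \cdot \calO(\rd_\tau \bfr) = \calO_A(t^4)$ on $U_\sgm$. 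The main bookkeeping to track carefully will be the orientation of $\rd U_\sgm$ (so that Stokes produces the correct relative sign between the two slices) and ensuring that the hypothesis implies $\bfalp \equiv 0$ on the closure $\br{U_\sgm}$ (hence at both boundary slices); both are routine but essential to make the cancellation structure rigorous.
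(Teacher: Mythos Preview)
Your proposal is correct and follows essentially the same approach as the paper: Part~(1) by expanding \eqref{eq:induced-data} to linear order around $\bfeta$, and Part~(2) by applying Stokes \eqref{eq:U-X-stokes} to ${}^{(\bfX)}\bbU[\bfg - \bfeta]$ on $U_{\sgm}$, using that $\bfalp$ vanishes there so $\bfg - \bfeta = \calO(t^{2})$ and hence $\uD_{\bfeta}\bfG[\bfg - \bfeta] = \calO(t^{4})$, then invoking \eqref{eq:LT-killing-vf} and Lemma~\ref{lem:charge-spt} before averaging in $\sgm$. Your write-up is in fact more explicit than the paper's in one place: you correctly note that Lemma~\ref{lem:charge-spt} relates the sphere integral of $\star\,{}^{(\bfX)}\bbU[\bfr]$ to the charges of the \emph{linearized} induced data $(\dot g, \dot k)$ from \eqref{eq:induced-data-lin}, and that the discrepancy $k - \dot k = \calO(t^{4})$ on the boundary spheres must be absorbed into the error --- the paper's proof elides this step.
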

\begin{proof}
For simplicity, we drop the $\tau$-dependence; we leave the similar proof of the $C^{1}$-dependence on $\tau$ to the reader. Statement~(1) is a simple consequence of extracting the terms of linear order in \eqref{eq:induced-data} around $\bfeta$. For Statement~(2), observe first that $\bfg = \calO_{A}(t^{2})$ on $U_{\sgm}$ since $\br{U_{\sgm}} \cap \supp \bfalp = \0$, and hence $\uD_{\bfeta} \bfG[\bfg] = \calO_{A}(t^{4})$ on $U_{\sgm}$. Applying \eqref{eq:U-X-stokes} to $U = U_{\sgm}$, we obtain
\begin{equation*}
	\int_{\set{y^{0} = 0, \abs{y} = (1+\sgm) r}} \star {}^{(X)} \bbU[\bfg] - \int_{\set{x^{0} = 0, \abs{x} = (1+\sgm) r_{0}}} \star {}^{(X)} \bbU[\bfg] = \calO_{A}(t^{4}),
\end{equation*}
for any Killing vector field $X$ with respect to $\bfeta$. In view of \eqref{eq:LT-killing-vf}, we obtain
\begin{align*}
\bbP_{\mu}[\bfg; \set{y^{0} = 0, \, \abs{y} = (1+\sgm) r}] &= \tensor{\Lmb}{_{\mu}^{\mu'}} \bbP_{\mu'}[\bfg; \set{x^{0} = 0, \, \abs{x} = (1+\sgm) r_{0}}] + \calO_{A}(t^{4}).
\end{align*}
Using Lemma~\ref{lem:charge-spt} and smoothly averaging in $\sgm$, we obtain the desired assertion for $\bbP_{\mu}$. The case of $\bbJ_{\mu \nu}$ is similar. \qedhere
\end{proof}

\begin{lemma} \label{lem:lin-einstein}
Consider an open subset $\bfOmg \subseteq \bbR^{1+3}$ such that $\Sgm_{0} := \set{x^{0} = 0} \cap \bfOmg$ is a Cauchy hypersurface in $\bfOmg$. Consider a symmetric covariant two-tensor field $\bfF_{\alp \bt}$ satisfying $\nb^{\alp} \bfF_{\alp \bt} = 0$ and a $1$-form $\bfJ_{\bt}$ defined on $\bfOmg$, as well as a pair of symmetric covariant two-tensor fields $(\alp_{ij}, \bt_{ij})$, a function $n$ and a $1$-form $N_{j}$ defined on $\Sgm_{0}$. The following initial value problem posed on $\bfOmg$
\begin{gather*}
\mathrm{D}_{\bfeta} \bfG [\bfalp]_{\alp \bt} = \bfF_{\alp \bt}, \quad \nb^{\alp} \bfH[\bfalp]_{\alp \bt} = \bfJ_{\bt}, \\
 (\bfalp_{ij}, \rd_{0} \bfalp_{ij}) |_{\Sgm_{0}} = (\alp_{ij}, 2 \bt_{ij} + \rd_{i} N_{j} + \rd_{j} N_{i}), \quad \bfalp_{00} |_{\Sgm_{0}} = n, \quad \bfalp_{0 j} |_{\Sgm_{0}} = N_{j},
\end{gather*}
is equivalent to the following reduced problem posed on $\bfOmg$:
\begin{gather*}
\Box \bfalp_{\alp \bt} = - 2 (\bfF_{\alp \bt} - \tfrac{1}{2} \bfeta_{\alp \bt} \tr_{\bfeta} \bfF) + \nb_{\alp} \bfJ_{\bt} + \nb_{\bt} \bfJ_{\alp},\\
 (\bfalp_{ij}, \rd_{0} \bfalp_{ij}) |_{\Sgm_{0}} = (\alp_{ij}, 2 \bt_{ij} + \rd_{i} N_{j} + \rd_{j} N_{i}), \quad \bfalp_{00} |_{\Sgm_{0}} = n, \quad \bfalp_{0 j} |_{\Sgm_{0}} = N_{j}, \\
(\nb^{\alp} \bfH[\bfalp]_{\alp \bt} - \bfJ_{\bt}) |_{\Sgm_{0}} = 0, \quad
(\mathrm{D}_{\bfeta} \bfG [\bfalp]_{\alp 0} - \bfF_{\alp 0}) |_{\Sgm_{0}} = 0.
\end{gather*}
\end{lemma}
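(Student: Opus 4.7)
The plan is to treat both implications through the harmonic-type gauge condition, encoded by the quantity $C_{\beta} := \nabla^{\alpha} \bfH[\bfalp]_{\alpha \beta} - \bfJ_{\beta}$. The forward direction (original $\Rightarrow$ reduced) is essentially algebraic: assuming $C_{\beta} \equiv 0$ on $\bfOmg$, I would substitute into \eqref{eq:lin-bfG} to reduce $\uD_{\bfeta} \bfG[\bfalp]_{\alpha\beta}$ to $\tfrac{1}{2}(-\Box \bfH_{\alpha\beta} + \nabla_{\alpha} \bfJ_{\beta} + \nabla_{\beta} \bfJ_{\alpha} - \bfeta_{\alpha\beta} \nabla^{\gamma} \bfJ_{\gamma})$. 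Setting this equal to $\bfF_{\alpha\beta}$ and trace-reversing (which, in four spacetime dimensions, inverts the involution $\bfalp \mapsto \bfH$) produces exactly the wave equation in the reduced problem. The Cauchy data and constraint equalities at $\Sgm_{0}$ in the reduced problem are then inherited trivially.

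For the converse, the central task is constraint propagation. I would set $E_{\alpha\beta} := \uD_{\bfeta} \bfG[\bfalp]_{\alpha\beta} - \bfF_{\alpha\beta}$ and first verify the algebraic identity
\[
E_{\alpha\beta} = \tfrac{1}{2}\bigl(\nabla_{\alpha} C_{\beta} + \nabla_{\beta} C_{\alpha} - \bfeta_{\alpha\beta}\nabla^{\gamma} C_{\gamma}\bigr)
\]
by inserting the reduced wave equation for $\bfalp$ (and its $\bfeta$-trace, which is needed to express $\Box \tr_{\bfeta} \bfalp$, and hence $\Box \bfH$, in closed form) into \eqref{eq:lin-bfG}; all $\bfF$ and $\tr_{\bfeta} \bfF$ terms should cancel exactly. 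Taking the divergence of this identity and using the linearized second Bianchi identity $\nabla^{\alpha} \uD_{\bfeta} \bfG[\bfalp]_{\alpha\beta} \equiv 0$ together with the hypothesis $\nabla^{\alpha} \bfF_{\alpha\beta} = 0$ gives $\nabla^{\alpha} E_{\alpha\beta} = 0$. In the flat background $(\bbR^{1+3}, \bfeta)$ covariant derivatives commute, so the right-hand side collapses to $\tfrac{1}{2} \Box C_{\beta}$, yielding the homogeneous linear wave equation $\Box C_{\beta} = 0$ on $\bfOmg$.

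To conclude, I would verify that $C_{\beta}$ has vanishing Cauchy data on $\Sgm_{0}$. The identity $C_{\beta}|_{\Sgm_{0}} = 0$ is precisely the first reduced-problem constraint; and because $C_{\beta}$ vanishes on $\Sgm_{0}$ its tangential (spatial) derivatives do too, so the displayed identity applied at $\Sgm_{0}$ with $\beta = 0$, combined with $E_{\alpha 0}|_{\Sgm_{0}} = 0$ (the second reduced-problem constraint), becomes a triangular linear system for $\rd_{0} C_{\beta}|_{\Sgm_{0}}$ whose unique solution is zero. Since $\Sgm_{0}$ is Cauchy in $\bfOmg$, uniqueness for the linear wave equation forces $C_{\beta} \equiv 0$, after which the identity immediately gives $E_{\alpha\beta} \equiv 0$, i.e., $\uD_{\bfeta} \bfG[\bfalp] = \bfF$ throughout $\bfOmg$. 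The principal (though modest) obstacle is the clean derivation of the $E_{\alpha\beta}$--$C_{\beta}$ identity: \eqref{eq:lin-bfG} is phrased in terms of $\bfH$ while the reduced wave equation is phrased in terms of $\bfalp$, so the trace reversal and the trace of the wave equation must be combined with care.
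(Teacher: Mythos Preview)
Your proposal is correct and follows essentially the same route as the paper: both hinge on the identity $\uD_{\bfeta}\bfG[\bfalp]_{\alp\bt}-\bfF_{\alp\bt}=\tfrac12(\nb_{\alp}C_{\bt}+\nb_{\bt}C_{\alp}-\bfeta_{\alp\bt}\nb^{\gmm}C_{\gmm})$, from which the forward direction is algebraic and the converse follows by deriving $\Box C_{\bt}=0$ via the linearized Bianchi identity and checking that the reduced constraints on $\Sgm_{0}$ force $(C_{\bt},\rd_{0}C_{\bt})|_{\Sgm_{0}}=0$. Your description of the latter step as a triangular linear system is in fact a bit more explicit than the paper's own one-line justification.
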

We remark that $\nb^{\alp} \bfH[\bfalp]_{\alp \bt} = 0$ is the linearized wave coordinate condition, and that $(\nb^{\alp} \bfH[\bfalp]_{\alp \bt} - \bfJ_{\bt}) |_{\Sgm_{0}}$ induces initial conditions for $\rd_{0} \bfalp_{0 \bt} |_{\Sgm_{0}}$. The reduced problem, being an inhomogeneous initial value problem for the classical wave equation, is well-posed (in, say, $H^{s} \times H^{s-1}(\set{x^{0} = \tau} \cap \bfOmg)$) by our hypothesis on $\bfOmg$.
\begin{proof}
This is a well-known computation essentially dating back to \cite{ChBr}. From \eqref{eq:lin-bfG}, it follows that
\begin{align*}
&\Box \bfalp_{\alp \bt} = - 2 (\bfF_{\alp \bt} - \tfrac{1}{2} \bfeta_{\alp \bt} \tr_{\bfeta} \bfF) + \nb_{\alp} \bfJ_{\bt} + \nb_{\bt} \bfJ_{\alp} \\
	&\impmi \uD_{\bfeta} \bfG[\bfalp]_{\alp \bt} = \bfF_{\alp \bt} + \frac{1}{2} \left( \nb_{\alp} (\nb^{\gmm} \bfH[\bfalp]_{\gmm \bt} - \bfJ_{\bt}) + \nb_{\bt} (\nb^{\gmm} \bfH[\bfalp]_{\gmm \alp} - \bfJ_{\alp}) - \bfeta_{\alp \bt} \nb^{\dlt} (\nb^{\gmm} \bfH[\bfalp]_{\gmm \dlt} - \bfJ_{\dlt}) \right).
\end{align*}
From this equivalence, the derivation of the reduced problem from the first problem is immediate. To show that the solution $\bfalp$ to the reduced problem solves the first problem, it suffices to show that $\nb^{\alp} \bfH[\bfalp]_{\alp \bt} = \bfJ_{\bt}$. Observe that the second equation combined with $\nb^{\alp} \uD_{\bfeta} \bfG[\bfalp]_{\alp \bt} = 0$ (linearized Bianchi) and $\nb^{\alp} \bfF_{\alp \bt} = 0$ implies $\Box (\nb^{\alp} \bfH[\bfalp]_{\alp \bt} - \bfJ_{\bt}) = 0$. Moreover, the same equation combined with $(\mathrm{D}_{\bfeta} \bfG [\bfalp]_{\alp 0} - \bfF_{\alp 0}) |_{\Sgm_{0}} = 0$ implies $\rd_{0} (\nb^{\alp} \bfH[\bfalp]_{\alp \bt} - \bfJ_{\bt}) |_{\Sgm_{0}} = 0$. Since $(\nb^{\alp} \bfH[\bfalp]_{\alp \bt} - \bfJ_{\bt}) |_{\Sgm_{0}} = 0$ as well, it follows from uniqueness of the initial value problem for the classical wave equation that $\nb^{\alp} \bfH[\bfalp]_{\alp \bt} = \bfJ_{\bt}$.
\end{proof}

\begin{proof}[Proof of Proposition~\ref{prop:bump-single}]
{\bf Step~1.~Choice of $\mathring{\alp}$.} We begin by choosing $\mathring{\alp}_{ij} \in C^{\infty}_{c}(\bbR^{3})$, to which we shall apply Lemma~\ref{lem:bump-en} in the end (in particular, for our motivation behind the choices $b_{0} = \frac{1}{2} \Gmm^{-2} b$ and \eqref{eq:alp0-size}, see Steps~2 and 5 below, respectively). Fix a function $\chi \in C^{\infty}_{c}(B_{1})$ such that
\begin{equation} \label{eq:alp0-size}
	\frac{1}{16} \int \abs{\rd (\rd_{11}^{2} + \rd_{22}^{2}) \chi}^{2} \, \ud x = 1.
\end{equation}
Following \cite[Sec.~4.1]{MaoTao}, we set
    \begin{align*}\mathring{\alp}=t \begin{pmatrix}
        \frac{1}{2}(\partial^2_{22}\chi_{b, \Gmm}-\partial^2_{11}\chi_{b, \Gmm})&-\partial^2_{12}\chi_{b, \Gmm}&0\\
        -\partial^2_{12}\chi_{b, \Gmm}&\frac{1}{2}(\partial^2_{11}\chi_{b, \Gmm}-\partial^2_{22}\chi_{b, \Gmm})&0\\
        0&0&-\frac{1}{2}(\partial^2_{11}+\partial^2_{22})\chi_{b, \Gmm}
    \end{pmatrix}
    \end{align*}
where $\chi_{b, \Gmm}(x) = (\tfrac{1}{2} \Gmm^{-2} b)^{\frac{3}{2}} \chi(2 \Gmm^{2} b^{-1} x)$ so that $\nrm{\rd^{3}_{i j k}\chi_{b, \Gmm}}_{L^{2}}^{2} = \nrm{\rd^{3}_{i j k}\chi}_{L^{2}}^{2}$. From the definition, it is straightforward to verify that $\rd^{i} \mathring{\alp}_{ij} = \rd_{j} \tr_{\dlt} \mathring{\alp}$ and \eqref{eq:bump-en-alp} hold with $b_{0} = \tfrac{1}{2} \Gmm^{-2} b$.

\smallskip
\noindent {\bf Step~2.~Construction of $\bfalp$ and $(\alp_{t, \ell, \xi}, \bt_{t, \ell, \xi})$.} We use Lemma~\ref{lem:lin-einstein} to define $\bfalp \in C^{\infty}(\bbR^{1+3})$ to be the unique solution to the problem
\begin{equation} \label{eq:bump-bfalp}
\begin{gathered}
\mathrm{D}_{\bfeta} \bfG [\bfalp]_{\alp \bt} = 0, \quad \nb^{\alp} \bfH[\bfalp]_{\alp \bt} = 0, \\
 (\bfalp_{ij}, \rd_{0} \bfalp_{ij}) |_{\set{x^{0} = 0}} = (\mathring{\alp}_{ij}, 0), \quad \bfalp_{00} |_{\set{x^{0} = 0}} = 0, \quad \bfalp_{0 j} |_{\set{x^{0} = 0}} = 0,
\end{gathered}
\end{equation}
where $\rd_{0} \bfalp_{00} |_{\set{x^{0} = 0}}$, $\rd_{0} \bfalp_{0j} |_{\set{x^{0} = 0}}$ are induced by $\nb^{\alp} \bfH[\bfalp]_{\alp \bt} = 0$. Recall also from Lemma~\ref{lem:lin-einstein} that, in fact, $\Box \bfalp_{\mu \nu} = 0$ and $\supp (\bfalp_{\mu \nu}, \rd_{t} \bfalp_{\mu \nu}) |_{\set{x^{0} = 0}} \subseteq \set{x^{0} = 0, \, \abs{x} < \frac{1}{2} \Gmm^{-2} b}$ for each $\mu, \nu$. Hence, by finite speed of propagation,
\begin{equation} \label{eq:bump-bfalp-supp}
	\supp \bfalp \subseteq \bfOmg_{\bfalp} := \set{\abs{x} < \abs{x^{0}} + \tfrac{1}{2} \Gmm^{-2} b}.
\end{equation}
Using the parameters $\ell$ and $\xi$, define $\Lmb_{\ell} \in SO^{+}(1, 3)$ (Lorentz boost by velocity $\ell$) and $\bfxi \in \bbR^{1+3}$ by
\begin{equation*}
	(\Lmb_{\ell} x)^{0} = \gmm(\ell) (x^{0} - \ell \cdot P_{\ell} x), \quad
	(\Lmb_{\ell} x)^{j} = \gmm(\ell) (- \ell^{j} x^{0} + (P_{\ell} x)^{j}) + (P^{\perp}_{\ell} x)^{j}, \quad
	\bfxi^{0} = 0, \quad
	\bfxi^{j} = \xi^{j},
\end{equation*}
where $\gmm(\ell) = (1 - \abs{\ell}^{2})^{-\frac{1}{2}}$ (Lorentz contraction factor), and $P_{\ell}, P_{\ell}^{\perp} : \bbR^{3} \to \bbR^{3}$ are orthogonal projections to $(\ell)$ and $(\ell)^{\perp}$, respectively. Define
\begin{equation*}
	y^{\mu} = \tensor{(\Lmb_{\ell})}{^{\mu}_{\nu}} x^{\nu} + \bfxi^{\mu},
\end{equation*}
which constitute another set of canonical coordinates. With respect to the coordinates $(y^{0}, \ldots, y^{3})$, we define $(\alp_{t, \ell, \xi}, \bt_{t, \ell, \xi})$ on $\set{y^{0} = 0}$ as (cf.~\eqref{eq:induced-data-lin})
\begin{equation} \label{eq:bump-alp'}
(\alp_{t, \ell, \xi})_{i j} = \bfalp_{ij} \big|_{\set{y^{0} = 0}}, \quad
(\bt_{t, \ell, \xi})_{i j} = \tfrac{1}{2} \left( \rd_{0} \bfalp_{i j} - \rd_{i} \bfalp_{j 0} - \rd_{j} \bfalp_{i 0} \right) \big|_{\set{y^{0} = 0}}.
\end{equation}
Note that this pair solves the linearized constraint equation, in the sense that $(h_{t, \ell, \xi},\pi_{t, \ell \xi}) = (\alp_{t, \ell, \xi} - \dlt \tr_{\dlt} \alp_{t, \ell, \xi}, \alp_{t, \ell, \xi} - \dlt \tr_{\dlt} \bt_{t, \ell, \xi})$, as in \eqref{eq:hpi-def}, solves $\vec{P}(h_{t, \ell, \xi},\pi_{t, \ell, \xi})=0$. We claim that
\begin{gather}
	\nrm{(t \rd_{t}, \rd_{\ell}, \rd_{\xi})^{n} (\alp_{t, \ell, \xi}, \bt_{t, \ell, \xi})}_{H^{s} \times H^{s-1}(\bbR^{3})}
	+ \nrm{\nb (t \rd_{t}, \rd_{\ell}, \rd_{\xi})^{n} \bfalp_{0 \mu}}_{H^{s-1}(\bbR^{3})} \aleq_{n, s, b, \Gmm} t, \label{eq:bump-alp'-est} \\
	\supp (\alp_{t, \ell, \xi}, \bt_{t, \ell, \xi}) \subseteq B_{b}(\xi). \label{eq:bump-alp'-supp}
\end{gather}
The $H^{s} \times H^{s-1}$ bound \eqref{eq:bump-alp'-est} follows from the standard energy estimate applied to $\Box (t \rd_{t}, \rd_{\ell}, \rd_{\xi})^{n} \bfalp_{\mu \nu} = 0$ in the region between the two planes $\set{x^{0} = 0}$ and $\set{y^{0} = 0}$. To verify \eqref{eq:bump-alp'-supp}, observe first that, since $\abs{x^{0}} \leq \abs{\ell}\abs{x}$ on $\set{y^{0} = 0}$, we have
\begin{equation*}
\set{y^{0} = 0} \cap \set{\abs{x} > \tfrac{1}{2} (1 - \abs{\ell})^{-1} \Gmm^{-2} b} \subseteq \set{\abs{x} > \abs{x^{0}} + \tfrac{1}{2} \Gmm^{-2} b} = \bbR^{1+3} \setminus \br{\bfOmg_{\bfalp}}.
\end{equation*}
Note also that, on $\set{y^{0} = 0}$, the coordinates $(y^{1}, y^{2}, y^{3})$ see length contraction by $\gmm^{-1}$ in the direction of $\ell$ relative to $(x^{1}, x^{2}, x^{3})$, i.e.,
\begin{equation} \label{eq:length-cont}
	 (y^{j} - \xi^{j}) \big|_{\set{y^{0} = 0}}
	= \gmm^{-1} (P_{\ell} x)^{j} + (P_{\ell}^{\perp} x)^{j} \big|_{\set{y^{0} = 0}}.
\end{equation}
In particular, $\abs{x} \geq \abs{y - \xi}$. Putting these facts together, and using $\frac{1}{2} (1-\abs{\ell})^{-1} \leq \gmm^{2} < \Gmm^{2}$, we have
\begin{equation*}
	\set{y^{0} = 0, \, \abs{y - \xi} \geq b} \subseteq \set{y^{0} = 0, \, \abs{x} \geq b} \subseteq \set{y^{0} = 0, \, \abs{x} > \tfrac{1}{2} (1-\abs{\ell})^{-1} \Gmm^{-2} b} \subseteq \bbR^{1+3} \setminus \br{\bfOmg_{\bfalp}}.
\end{equation*}
In view of \eqref{eq:bump-bfalp-supp}, $\bfalp = 0$ in an open neighborhood of $\set{y^{0} = 0, \, \abs{y - \xi} \geq b}$, which implies the desired support statement.

\smallskip
\noindent {\bf Step~3.~Construction of $(g_{t, \ell, \xi}, k_{t, \ell, \xi})$.}
We work on the hyperplane $\set{y^{0} = 0}$ using the coordinates $(y^{1}, y^{2}, y^{3})$. By \eqref{eq:bump-alp'-est}, \eqref{eq:bump-alp'-supp} and hypothesis, note that
\begin{equation*}
\supp (h_{t, \ell, \xi}, \pi_{t, \ell, \xi}) \subseteq C_{\tht}(\bfomg) \cap A_{8}, \quad
\nrm{(t \rd_{t}, \rd_{\ell}, \rd_{\xi}) (h_{t, \ell, \xi}, \pi_{t, \ell, \xi})}_{\Hb^{s, \dlt} \times \Hb^{s-1, \dlt+1}} = \calO_{s, b, \Gmm}(t),
\end{equation*}
where the weights do not play any role in the second assertion by the support property. Let $\vec{S}_{c} = (S_{c}, T_{c})$ be the conic solution operators in Lemma~\ref{lem:conic} adapted to $C_{\theta}(\bfomg)$. By the computation in Section~\ref{subsec:linearize}, in order to construct a solution to \eqref{eq:constraint} of the form $(g_{t, \ell, \xi}, k_{t, \ell, \xi}) = (\dlt + \alp_{t, \ell, \xi} + \td{\alp}, \bt_{t, \ell, \xi} + \td{\bt})$, it suffices to solve
\begin{align*}
    (\td{h},\td{\pi})= \vec{S}_{c} \vec{N}(h_{t, \ell, \xi} + \td{h}, \pi_{t, \ell, \xi} + \td{\pi}),
\end{align*}
where $(\td{h},\td{\pi}) = (\td{\alp} - \dlt \tr_{\dlt} \td{\alp}, \td{\bt} - \dlt \tr_{\dlt} \td{\bt})$.
By standard Picard iteration, there is a unique solution $(\td{h}, \td{\pi}) \in (\Hb^{s, \dlt} \times \Hb^{s-1, \dlt+1})_{0}(C_{\tht}(\bfomg))$  as long as $t$ is sufficiently small depending on $s, b, \Gmm$, such that
\begin{equation*}
\nrm{(t \rd_{t}, \rd_{\ell}, \rd_{\xi})^{n} (\td{h}, \td{\pi})}_{\Hb^{s, \dlt} \times \Hb^{s-1, \dlt+1}} \aleq_{n, s, b, \Gmm} t^{2}.
\end{equation*}
By \eqref{eq:hpi2gk}, assertions \eqref{eq:bump-single-supp} and \eqref{eq:bump-single-est} immediately follow. Moreover, since $(g_{t, \ell, \xi} - \dlt, k_{t, \ell, \xi}) = (\td{\alp}, \td{\bt})$ outside of $\supp (\alp_{t, \ell, \xi}, \bt_{t, \ell, \xi})$, \eqref{eq:bump-single-imp} follows as well.

\smallskip
\noindent {\bf Step~4.~Construction of $\bfg$ and identification with $\bfalp$ up to $\calO(t^{2})$.}
In preparation for computation of the charges for $(g_{t, \ell, \xi}, k_{t, \ell, \xi})$ on $\set{y^{0} = 0}$, we consider its Cauchy development $\bfg$. More precisely, we solve the vacuum Einstein equation $\bfG(\bfg) = 0$ in the domain $\bfOmg = \set{(y^{0}, y^{1}, y^{2}, y^{3}) \in \bbR^{1+3} : \abs{y^{0}} < 64, \, \abs{y} + 2 \abs{y^{0}} < 256}$ under the wave coordinate gauge condition $\Box_{\bfg} y^{\mu} = 0$ with the following initial data:
\begin{equation} \label{eq:bump-bfg}
	\bfg_{00} |_{\set{y^{0} = 0}} = -1 + \bfalp_{00} |_{\set{y^{0} = 0}}, \quad
	\bfg_{0j} |_{\set{y^{0} = 0}} = \bfalp_{0j} |_{\set{y^{0} = 0}},
\end{equation}
as well as $\bfg_{ij} |_{\set{y^{0} = 0}}$, $\rd_{0} \bfg_{ij} |_{\set{y^{0} = 0}}$, $\rd_{0} \bfg_{0 j} |_{\set{y^{0} = 0}}$ and $\rd_{0} \bfg_{0 0} |_{\set{y^{0} = 0}}$ determined (in order) from the initial data set $(g_{t, \ell, \xi}, k_{t, \ell, \xi})$ using \eqref{eq:induced-data} and $\Box_{\bfg} y^{\mu}  = 0$. Assuming that $t < \eps_{b, 0}$ with $\eps_{b, 0}$ sufficiently small depending on $b, \Gmm$, we may ensure the existence of $\bfg$ in $\bfOmg$ by the standard local well-posedness theory \cite[Ch.~14--15]{Rin} (see also \cite{ChBr}). Observe also that $\rd \bfOmg$ is spacelike with respect to $\bfeta$, and hence with respect to $\bfg$ provided that $t$ is sufficiently small. By \eqref{eq:bump-alp'-est}, boundedness, persistence of regularity, Cauchy stability and  Sobolev embedding, we have
\begin{equation} \label{eq:bump-bfg-est}
	\nrm*{(t \rd_{t}, \rd_{\ell}, \rd_{\xi})^{n}(\bfg - \bfeta)}_{C^{s}(\bfOmg)} \aleq_{n, s, b, \Gmm} t.
\end{equation}
To conclude this step, it remains to identify $\bfg$ with $\bfalp$ up to terms of order $t^{2}$. Observe, from \eqref{eq:bump-alp'} and our choice of initial data for $\bfg$, that $\td{\bfalp} = \bfg - \bfeta - \bfalp$ solves
\begin{gather*}
\mathrm{D}_{\bfeta} \bfG [\td{\bfalp}]_{\alp \bt} = \calO(t^{2}), \quad \nb^{\alp} \bfH[\td{\bfalp}]_{\alp \bt} = \calO(t^{2}), \\
 (\td{\bfalp}_{ij}, \rd_{0} \td{\bfalp}_{ij}) |_{\set{y^{0} = 0}} = \calO(t^{2}), \quad (\td{\bfalp}_{00}, \td{\bfalp}_{0j}) |_{\set{y^{0} = 0}} = 0, \quad (\rd_{0} \td{\bfalp}_{00}, \rd_{0} \td{\bfalp}_{0j}) |_{\set{y^{0} = 0}} = \calO(t^{2}),
\end{gather*}
where $g = \calO(t^{2})$ is a shorthand for $\nrm{(t \rd_{t}, \rd_{\ell}, \rd_{\xi})^{n} g}_{C^{s}(\bfOmg)} \aleq_{n, s, b, \Gmm} t^{2}$. It follows from Lemma~\ref{lem:lin-einstein} (and Sobolev embedding) that
\begin{equation} \label{eq:bump-tdbfalp-est}
	\nrm{(t \rd_{t}, \rd_{\ell}, \rd_{\xi})^{n} (\bfg - \bfeta - \bfalp)}_{C^{s}(\bfOmg)} \aleq_{n, s, b, \Gmm} t^{2}.
\end{equation}

\smallskip
\noindent {\bf Step~5.~Computation of charges.}
Thanks to \eqref{eq:bump-tdbfalp-est}, we may now use Lemma~\ref{lem:bump-poincare} to compute the charges of $(g_{t, \ell, \xi}, k_{t, \ell, \xi})$, which equals $(g^{(\Lmb_{\ell}, \bfxi)}, k^{(\Lmb_{\ell}, \bfxi)})$ in the notation of Lemma~\ref{lem:bump-poincare}, in terms of those of $(g, k)$, which is the induced data on $\set{x^{0} = 0}$. Moreover, in view of \eqref{eq:induced-data} with $(\Lmb, \bfxi) = (I, 0)$, \eqref{eq:bump-bfalp} and \eqref{eq:bump-tdbfalp-est}, we may appeal to Lemma~\ref{lem:bump-en} with $\mathring{\bfalp}$ as in Step~1 to compute the charges of $(g, k)$. By \eqref{eq:alp0-size}, we have
\begin{align*}
\frac{1}{16} \int \sum_{j, k, \ell} (\rd_{k} \mathring{\alp}_{j \ell} - \rd_{\ell} \mathring{\alp}_{j k})^{2} \, \ud x
&= \frac{1}{8} \int \left[ \sum_{j, k, \ell} (\rd_{j} \mathring{\alp}_{k, \ell})^{2} - \sum_{j} (\rd_{j} \tr_{\dlt} \mathring{\alp})^{2} \right] \, \ud x \\
&= t^{2} \int \left[ \frac{1}{4} \abs{\rd \rd^{2}_{12} \chi_{b, \Gmm}}^{2} + \frac{1}{16} \abs{\rd (\rd^{2}_{11} - \rd^{2}_{22}) \chi_{b, \Gmm}}^{2} \right] \, \ud x \\
&= \frac{t^{2}}{16} \int \abs{\rd (\rd^{2}_{11} + \rd^{2}_{22}) \chi}^{2}  \, \ud x  = t^{2}.
\end{align*}
Since $\set{x^{0} = 0,\, 16 \leq \abs{x} \leq 32}$ and $\set{y^{0} = 0,\, 16 \leq \abs{y} \leq 32}$ are contained in $\bfOmg$ yet disjoint from $\br{\bfOmg_{\bfalp}}$, for each $\sgm$ we may find a hypersurface $U_{\sgm}$ such that $\rd U_{\sgm} = \set{x^{0} = 0, \, \abs{x} = 16 (1+\sgm)} \cup \set{y^{0} = 0, \, 16 (1+\sgm)}$, $\br{U_{\sgm}} \cap \supp \bfalp = \0$ while $\mathrm{Vol} (U_{\sgm}) = \calO(1)$.
By Lemma~\ref{lem:bump-en}, we have
\begin{equation*}
(\bbP_{0}, \bbP_{i}, \bbJ_{\mu \nu})[(g, k); A_{16}] = (t^{2}, 0, \calO(\Gmm^{-2} b t^{2})) + \calO_{b, \Gmm} (t^{3}),
\end{equation*}
and similarly after applying of $(t \rd_{t}, \rd_{\ell}, \rd_{\xi})$. Applying Lemma~\ref{lem:bump-poincare}, \eqref{eq:spt-averge-charges} and $\nrm{\Lmb_{\ell}}_{\bbR^{4} \to \bbR^{4}} \aleq \gmm \aleq \Gmm$, we immediately arrive at \eqref{eq:bump-single-E}--\eqref{eq:bump-single-P} for the charges of $(g_{t, \ell, \xi}, k_{t, \ell, \xi})$ on $\set{y^{0} = 0}$. Moreover, \eqref{eq:bump-single-C}--\eqref{eq:bump-single-J} follow as well by estimating $\tensor{(\Lmb_{\ell})}{_{\mu}^{\mu'}} \tensor{(\Lmb_{\ell})}{_{\nu}^{\nu'}} \calO(\Gmm^{-2} b t^{2}) = \calO(b t^{2})$. \qedhere
\end{proof}

\subsection{Multi-bump configurations with prescribed charges}
Our first goal is to establish the following.
\begin{proposition}[Almost time-symmetric four-bump configuration] \label{prop:bump-four}
Fix $-1 < \dlt < - \frac{1}{2}$ and $\tht \in (0, \frac{\pi}{2})$, and define $C^{(4)}_{\tht} = C_{\tht}(\bfe_{1}) \cup C_{\tht}(-\bfe_{1}) \cup C_{\tht}(\bfe_{2}) \cup C_{\tht}(-\bfe_{2})$. For $s > \frac{3}{2}$, there exists $\eps_{b, 1} = \eps_{b, 1}(s) > 0$ such that the following holds. Consider
\begin{equation*}
	\calU = \set{Q \in \bbR^{10} : 0 < \bfE < 2 \eps_{b, 1}^{2}, \, \abs{\bfC} + \abs{\bfP} + \abs{\bfJ} < 2 \eps_{b, 1}^{2} \bfE}.
\end{equation*}
For any $Q \in \calU$, there exists $(g_{Q}^{\rm bump}, k_{Q}^{\rm bump}) \in C^{\infty}(\bbR^{3})$ such that
    \begin{align*}
        \supp (g_Q^{\rm bump}-\delta,k_Q^{\rm bump})\subseteq C^{(4)}_{\tht} \cap\{|x|>8\}\quad \text{and}\quad \bfQ[(g_Q^{\rm bump},k_Q^{\rm bump}); A_{16}]=Q.
        \end{align*}
       Moreover, the map $Q\mapsto (g_Q^{\rm bump}-\delta,k_Q^{\rm bump})$ is $C^{1}$ in the $H^{s,\delta}_{\rm b}\times H_{\rm b}^{s-1,\delta+1}$ topology, and
\begin{align}\label{eq:size-bump}
    \|(g_Q^{\rm bump}-\delta,k_Q^{\rm bump})\|_{H^{s,\delta}_{\rm b}\times H_{\rm b}^{s-1,\delta+1}}&\aleq_s \sqrt{\bfE}, \\
    \|\rd_{Q} (g_Q^{\rm bump},k_Q^{\rm bump})\|_{H^{s,\delta}_{\rm b}\times H_{\rm b}^{s-1,\delta+1}} &\aleq_{s} \frac{1}{\sqrt{\bfE}}. \label{eq:Lip-bump}
\end{align}
Moreover, in $\set{\abs{x} > 16}$, we have the improved bounds
\begin{align}
    \|(g_Q^{\rm bump} - \dlt,k_Q^{\rm bump})\|_{H^{s,\delta}_{\rm b}\times H_{\rm b}^{s-1,\delta+1}(\set{\abs{x} > 16})} & \aleq_{s} \bfE, \label{eq:size-extr-bump} \\
    \|\rd_{Q} (g_Q^{\rm bump},k_Q^{\rm bump})\|_{H^{s,\delta}_{\rm b}\times H_{\rm b}^{s-1,\delta+1}(\set{\abs{x} > 16})}&\lesssim_s 1. \label{eq:Lip-extr-bump}
\end{align}
\end{proposition}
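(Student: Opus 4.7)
The plan is to superpose four bumps from Proposition~\ref{prop:bump-single} placed along the $\pm\bfe_1, \pm\bfe_2$ axes with pairwise disjoint supports, and then tune their parameters via a quantitative implicit function theorem so that the total averaged charges on $A_{16}$ match $Q$.

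\textbf{Setup and superposition.} First I would replace $\tht$ by $\tht' := \min\{\tht, \pi/8\}$, so that the four cones $C_{\tht'}(\bfe_{(a)})$ with $\bfe_{(a)} \in \{\bfe_1, -\bfe_1, \bfe_2, -\bfe_2\}$ ($a = 1, 2, 3, 4$) are pairwise disjoint. Fix a small scale $b > 0$ and base centers $\xi_a^0 := 12\, \bfe_{(a)}$ so that $\br{B_b(\xi_a^0 + \eta_a)} \subset C_{\tht'}(\bfe_{(a)}) \cap A_8$ for all $|\eta_a| \leq b/2$. For each $a$, invoke Proposition~\ref{prop:bump-single} with cone angle $\tht'$ and $\Gmm = 2$ to obtain $(g_a, k_a) := (g_{t_a, \ell_a, \xi_a^0 + \eta_a}, k_{t_a, \ell_a, \xi_a^0 + \eta_a})$. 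Because the four cones are disjoint, so are the supports of $(g_a - \dlt, k_a)$; hence the candidate
\begin{equation*}
(g_Q^{\rm bump}, k_Q^{\rm bump}) := \bb(\dlt + \textstyle\sum_a (g_a - \dlt),\ \sum_a k_a\bb)
\end{equation*}
agrees with $(g_a, k_a)$ on each cone and with $(\dlt, 0)$ elsewhere, so it solves \eqref{eq:constraint} on $\bbR^3$ and obeys $\supp (g_Q^{\rm bump} - \dlt, k_Q^{\rm bump}) \subseteq C^{(4)}_\tht \cap \{|x| > 8\}$.

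\textbf{Charges and parameter solving.} The averaged charges in \eqref{eq:charge-avg} are linear in $(g - \dlt, k)$, so by \eqref{eq:bump-single-E}--\eqref{eq:bump-single-J},
\begin{align*}
\bfQ[(g_Q^{\rm bump}, k_Q^{\rm bump}); A_{16}] = \textstyle\sum_a \gmm(\ell_a) t_a^{2} \bb(1, \ell_a, \xi_a^0 + \eta_a, (\xi_a^0 + \eta_a) \times \ell_a\bb) + \calO(b \textstyle\sum_a t_a^2) + \calO_b(\textstyle\sum_a t_a^3).
\end{align*}
Write $t_a = \tau(1 + s_a)$ with $\tau := \sqrt{\bfE}/2$, and seek $(s_a, \eta_a, \ell_a) = \calO(\eps_{b,1})$. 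Using $\sum_a \xi_a^0 = 0$ and $\gmm(\ell_a) = 1 + \calO(|\ell_a|^2)$, the leading-order relation is
\begin{equation*}
\bb(\tfrac{\bfE}{\tau^2} - 4,\ \tfrac{\bfP}{\tau^2},\ \tfrac{\bfC}{\tau^2},\ \tfrac{\bfJ}{12\tau^2}\bb) = \bb(2\textstyle\sum_a s_a,\ \sum_a \ell_a,\ \sum_a \eta_a,\ \sum_a \bfe_{(a)} \times \ell_a\bb) + \text{h.o.t.}
\end{equation*}
This linear map is surjective onto $\bbR^{10}$ with a bounded explicit right-inverse: $\sum s_a$ controls $\bfE$, $\sum \eta_a$ controls $\bfC$, and the pair $(\sum \ell_a, \sum \bfe_{(a)} \times \ell_a)$ jointly controls $(\bfP, \bfJ/12)$ via the explicit components $\ell_3^3 - \ell_4^3$, $\ell_2^3 - \ell_1^3$, $\ell_1^2 - \ell_2^2 - \ell_3^1 + \ell_4^1$ of the cross-product sum, which involve parameters disjoint from those setting $\sum \ell_a$. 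For $Q \in \calU$ (so the right-hand side is $\calO(\eps_{b,1}^2)$) and higher-order corrections of relative size $\calO(\sqrt{\bfE}) + \calO(b)$, a Banach contraction in a ball of radius $C \eps_{b,1}^{2}$ produces a unique $C^{1}$-in-$Q$ map $Q \mapsto (s_a, \eta_a, \ell_a)$ realizing $\bfQ[(g_Q^{\rm bump}, k_Q^{\rm bump}); A_{16}] = Q$, provided $\eps_{b,1}$ is small depending on $s, b$.

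\textbf{Bounds and main obstacle.} Summing \eqref{eq:bump-single-est} over the four disjoint supports with $t_a \aeq \sqrt{\bfE}$ gives \eqref{eq:size-bump}. For \eqref{eq:Lip-bump}, chain the parameter-to-bump estimates $\|(t_a \rd_{t_a}, \rd_{\ell_a}, \rd_{\xi_a}) g_a\|_{\Hb^{s, \dlt}} \aleq t_a$ of Proposition~\ref{prop:bump-single} with the Jacobian bounds $|\rd_\bfE \tau| \aeq \bfE^{-1/2}$ and $|\rd_\bfP \ell_a|, |\rd_\bfC \eta_a|, |\rd_\bfJ \ell_a| \aeq \bfE^{-1}$ from the implicit function theorem; every composition is $\aleq \bfE^{-1/2}$. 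The improved bounds \eqref{eq:size-extr-bump}, \eqref{eq:Lip-extr-bump} on $\{|x| > 16\}$ follow directly from \eqref{eq:bump-single-imp}, which upgrades the factor $t$ to $t^{2}$ in this region. The main technical obstacle is verifying the bounded right-inverse of the linearized charge map (straightforward but requiring care to disentangle position and velocity degrees of freedom, given that the centers $\xi_a^0 = 12 \bfe_{(a)}$ lie in the $x_1$-$x_2$ plane so rotations about $\bfe_3$ must be generated from velocity perturbations) and controlling the $\calO(b)$ relative error in the $\bfC, \bfJ$ components within the contraction, which dictates the smallness of $\eps_{b,1}$ relative to $b$.
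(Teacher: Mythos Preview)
Your approach is correct and essentially the same as the paper's: superpose four single bumps from Proposition~\ref{prop:bump-single} in disjoint cones and invert the parameter-to-charge map near a symmetric base configuration. There is one substantive difference and one resulting oversight.

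The paper pairs each bump with its \emph{inversion dual}: the bump in $C_{\tht}(-\bfe_1)$ is taken to be $(g^*(x),k^*(x)):=(g(-x),-k(-x))$ where $(g,k)$ is the bump in $C_{\tht}(\bfe_1)$ (and similarly for $\pm\bfe_2$). This forces $\bfC[(g^{(4)},k^{(4)});A_{16}]=0$ \emph{exactly} at the base point $\xi+\xi^*=0$, $\ell=0$, rather than only up to the $\calO(b\,t^{2})$ error of \eqref{eq:bump-single-C}. The paper then selects exactly ten parameters (a single shared $t$, the three $\xi$-components, the three $\ell$-components, and three further velocity components $\ell^*_2,\ell^*_3,m_3$) and applies the inverse function theorem at this base point. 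Your use of four independent $t_a$'s and an explicit right inverse of the overdetermined linearization is an equally valid variant.

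The oversight: without the inversion pairing, your base configuration has $\bfC/\tau^{2}=\calO(b)$, not $0$, so the fixed-point iterate for $\eta_a$ must absorb a term of size $\sim b$, not $\sim\eps_{b,1}^{2}$. Consequently your contraction ball should have radius $\sim b$ (or larger), and the constraint $|\eta_a|\leq b/2$ is too tight --- the geometry only requires $|\eta_a|$ bounded by a fixed constant depending on $\tht'$ (the distance from $12\bfe_{(a)}$ to $\partial(C_{\tht'}(\bfe_{(a)})\cap A_8)$ is $\sim 1$, not $\sim b$). Either relax the $\eta_a$-range to $\calO(1)$ and run the contraction in a ball of radius $\sim b$, or adopt the paper's inversion-dual pairing so that the base $\bfC$ is exactly zero; then your argument goes through as written.
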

\begin{proof}
We would like to consider a superposition of $4$ bumps with disjoint supports and use the inverse function theorem around $\bfE>0, \bfC=\bfP=\bfJ=0$. For $\br{B_{b}(\xi)} \subseteq C_{\tht}(\bfe_1) \cap A_{8}$, $\br{B_{b}(\eta)} \subseteq C_{\tht}(\bfe_2) \cap A_{8}$, let
$$\supp(g_{t,\ell,\xi}-\delta,k_{t,\ell,\xi})\subseteq C_\tht(\bfe_1),\quad\supp(g_{t,m,\eta}-\delta,k_{t,m,\eta})\subseteq C_\tht(\bfe_2)$$
be the single bumps constructed in Proposition~\ref{prop:bump-single}. We define
\begin{align*}
    (g^*_{t,\ell^*,\xi^*}(x),k^*_{t,\ell^*,\xi^*}(x))&:=(g_{t,\ell^*,-\xi^*}(-x),-k_{t,\ell^*,-\xi^*}(-x))\in C^\infty(C_\tht(-\bfe_1)),\\
    (g^*_{t,m^*,\eta^*}(x),k^*_{t,m^*,\eta^*}(x))&:=(g_{t,m^*,-\eta^*}(-x),-k_{t,m^*,-\eta^*}(-x))\in C^\infty( C_\tht(-\bfe_2) )
\end{align*}
be the dual bumps defined by inversion symmetry.
Since the solutions have disjoint supports, we define the superposition of $4$ bumps
\begin{equation}
    (g^{(4)}-\delta, k^{(4)}):=(g_{t,\ell,\xi}-\delta,k_{t,\ell,\xi})+(g_{t,m,\eta}-\delta,k_{t,m,\eta})+(g^*_{t,\ell^*,\xi^*}-\delta,k^*_{t,\ell^*,\xi^*})+(g^*_{t,m^*,\eta^*}-\delta,k^*_{t,m^*,\eta^*}),
\end{equation}
which solves \eqref{eq:constraint}. Now we get a family with $25$ parameters and we are interested in the charges
\begin{equation}
    \bfQ:(t,\ell,\ell^*,\xi,\xi^*,m,m^*,\eta,\eta^*)\mapsto \bfQ[(g^{(4)},k^{(4)});A_{16}].
\end{equation}
We need to choose $10$ of them with a uniform non-degenerate Jacobian. Intuitively, we will choose $t$ for the energy, $\xi$ for the central charge, $\ell$ for the momentum and three parameters among $(\ell^*,m)$ for the angular momentum.

We note at $\ell=0$ we have $k_{t,\ell,\xi}=0$ and thus $$\bfP[(g_{t,0,\xi},k_{t,0,\xi});A_{16}]=\bfJ[(g_{t,0,\xi},k_{t,0,\xi});A_{16}]=0.$$
Moreover, by the inversion symmetry, for $\xi+\xi^*=0$, $\eta+\eta^*=0$, we have
$$\bfC[(g^{(4)},k^{(4)});A_{16}]=0.$$

Now we fix $\xi^*,\eta,\eta^*,m^*$ and compute the Jacobian of the family $(t,\xi,\ell,\ell^*,m)\mapsto \bfQ$ with  $13$ parameters at the point $\ell=\ell^*=m=m^*=0$ and $\xi+\xi^*=\eta+\eta^*=0$:
\begin{align}\label{eq:bump-jacobian}
        \begin{pmatrix}
    t\partial_t\bfE&\partial_\xi \bfE&\partial_\ell\bfE&\partial_{\ell^*} \bfE&\partial_{m}\bfE\\
    t\partial_t\bfC&\partial_\xi \bfC&\partial_\ell\bfC&\partial_{\ell^*} \bfC&\partial_{m}\bfC\\
    t\partial_t\bfP&\partial_\xi \bfP&\partial_\ell\bfP&\partial_{\ell^*} \bfP&\partial_{m}\bfP\\
    t\partial_t\bfJ&\partial_\xi \bfJ&\partial_\ell\bfJ&\partial_{\ell^*} \bfJ&\partial_{m}\bfJ
    \end{pmatrix}= \begin{pmatrix}
    8t^2  & 0 &0 &0 & 0 \\
    0&t^2&0 &0&0\\
    0&0 & t^2&t^2 & t^2\\
     0& 0 & t^2\tensor{\eps}{_i^{jk}}\xi_j & t^{2} \tensor{\eps}{_i^{jk}}\xi^*_j&t^2\tensor{\eps}{_i^{jk}}\eta_j \end{pmatrix}+\begin{pmatrix}
         \mathcal{O}_{b}(t^3)\\ \mathcal{O}(bt^2)+\mathcal{O}_{b}(t^3)\\
         \mathcal{O}_{b}(t^3)\\ \mathcal{O}(bt^2)+\mathcal{O}_{b}(t^3)
     \end{pmatrix}.
\end{align}
If we subtract the $\ell$ columns from the $\ell^*$ columns, and then half the $\ell^*$ column and add to the $\ell$ column, and finally subtract the $\ell$ column from the $m$ column, we would get
\begin{align*}
\begin{pmatrix}
    8t^2  & 0 &0 &0 & 0 \\
    0&t^2&0 &0&0\\
    0&0 & t^2&0 & 0\\
     0& 0 & 0 & t^{2} \tensor{\eps}{_i^{jk}}\xi^*_j&t^2\tensor{\eps}{_i^{jk}}\eta_j \end{pmatrix}+\begin{pmatrix}
         \mathcal{O}_{b}(t^3)\\ \mathcal{O}(bt^2)+\mathcal{O}_{b}(t^3)\\
         \mathcal{O}_{b}(t^3)\\ \mathcal{O}(bt^2)+\mathcal{O}_{b}(t^3)
     \end{pmatrix}.
\end{align*}
Let $\xi=(12,0,0)$ so that $\xi^*=(-12,0,0)$, and $\eta=(0,12,0)$, then the $3\times 6$ minor in the bottom-right corner of the first matrix is
\begin{align*}
    t^2(\tensor{\eps}{_i^{jk}}\xi^*_j, \tensor{\eps}{_i^{jk}}\eta_j )=t^2\begin{pmatrix}
        0&0&0&0&0&12\\
        0&0&12&0&0&0\\
        0&-12&0&-12&0&0
    \end{pmatrix}
\end{align*}
We can then select the variables $\ell^*_2,\ell^*_3,m_3$ so that
$\displaystyle\frac{\partial(\bfE,\bfC,\bfP,\bfJ)}{\partial(t,\xi,\ell,\ell^*_2,\ell^*_3,m_3)}$
is non-degenerate once we choose $t\ll_b 1$ and $b\ll 1$.


Now we fix a sufficiently small $b>0$ and use the inverse function theorem to conclude the proposition. We denote $\Theta=(\xi,\ell,\ell^*_2,\ell^*_3,m_3)$ and fix $\xi^*=(-12,0,0)$, $\eta=(0,12,0)$, $\eta^*=(0,-12,0)$, $\ell^*_1=m_1=m_2=0$ and $m^*=0$, and apply the inverse function theorem over the region
\begin{align*}
    \mathcal{U}_{\eps_1}:=\{(1-\eps_{b, 1}^{2})\eps_1^2<\bfE<(1+\eps_{b, 1}^{2})\eps_1^2, \, |\bfC|+|\bfP|+|\bfJ|<10\eps_{b, 1}^{2} \eps_1^2\}.
\end{align*}

By \eqref{eq:bump-jacobian}, for $\eps_1$ sufficiently small, the map $(\tau,\Theta)\mapsto \eps_1^{-2}\bfQ(\eps_1 \tau,\Theta)$
is a diffeomorphism with uniformly non-degenerate Jacobian near the point $$\eps_1^{-2}\bfE= 1,\quad \eps_1^{-2}\bfC=\eps_1^{-2}\bfP=\eps_1^{-2}\bfJ=0,\quad \eps_1\ll_b 1, b\ll 1.$$
More precisely, there is a uniform (i.e. independent of $\eps_1$) neighbourhood $\mathcal{V}$ near $\tau=1/2,\Theta=0$ such that
$$(\tau,\Theta)\mapsto \eps_1^{-2}\bfQ(\eps_1 \tau,\Theta):\mathcal{V}\to \eps_1^{-2}\mathcal{U}_{\eps_1}$$
is a diffeomorphism with uniformly bounded inverse in $C^1$.
Let $\td{Q}\mapsto (\tau,\Theta)$ be the inverse map so that
\begin{align*}
    \td{Q}\mapsto (\tau,\Theta)\mapsto \eps_1^{-2}\bfQ(\eps_1 \tau, \Theta)=\td{Q},\quad \td{Q}\in\eps_1^{-2}\mathcal{U}_{\eps_1}.
\end{align*}
Then we have
\begin{align*}
    \bfQ[(g,k)_{(\eps_1\tau,\Theta)_{\td{Q}}};A_{16}]=\eps_1^2\td{Q},\quad \td{Q}\in \eps_1^{-2}\mathcal{U}_{\eps_1}.
\end{align*}
Taking $Q=\eps_1^2\td{Q}$, we conclude the construction of a reparametrized family of $(g_Q^{\rm bump}, k_{Q}^{\rm bump})$ such that $\bfQ[(g_Q^{\rm bump}, k_{Q}^{\rm bump});A_{16}]=Q$ for any $Q\in\mathcal{U}_{\eps_1}$. It also follows from the construction that $Q\mapsto(g_Q^{\rm bump}-\delta, k_{Q}^{\rm bump})$ is smooth in the $H^{s,\delta}_{\rm b}\times H_{\rm b}^{s-1,\delta+1}$ topology.
Note that by varying $\eps_1$, we can cover $\mathcal{U}$ with
$$\mathcal{U}\subset \bigcup\limits_{0<\eps_1<10\eps_{b,1}}\mathcal{U}_{\eps_1}.$$
By the uniqueness of the inverse function theorem, the $\eps_1$-family $\{g_Q:Q\in \mathcal{U}_{\eps_1}\}$ glues together to a smooth family $g_Q^{\rm bump}$ for $Q\in \mathcal{U}$.

Moreover, by Proposition~\ref{prop:bump-single}, we have for $n=0,1$ (recall $t=\eps_1\tau \aeq \eps_1$),
\begin{align*}
\|(t\partial_t,\partial_{\Theta})^{(n)}(g_Q^{\rm bump}-\delta, k_{Q}^{\rm bump})\|_{H_{\rm b}^{s,\delta}\times H_{\rm b}^{s-1,\delta+1}}&=\mathcal{O}(t)=\mathcal{O}(\sqrt{\bfE}), \end{align*}
as well as the following chain of estimates:
\begin{align*}
    \|\partial_Q(g_Q^{\rm bump}, k_{Q}^{\rm bump})\|_{H_{\rm b}^{s,\delta}\times H_{\rm b}^{s-1,\delta+1}}&= \eps_1^{-2}\|\partial_{\td{Q}}(g_Q^{\rm bump}, k_{Q}^{\rm bump})\|_{H_{\rm b}^{s,\delta}\times H_{\rm b}^{s-1,\delta+1}}\\
    &\lesssim \eps_1^{-2}\|\partial_{(\tau, \Tht)}(g_Q^{\rm bump}, k_{Q}^{\rm bump})\|_{H_{\rm b}^{s,\delta}\times H_{\rm b}^{s-1,\delta+1}}\\
    &=\mathcal{O}(\eps_{1}^{-1}) = \mathcal{O}(\sqrt{\bfE}^{-1}).
\end{align*}
We are left to check the uniform $C^{1}$-dependence on $Q$ in $\{|x|>16\}$. By \eqref{eq:bump-single-imp}, for $n=0,1$,
\begin{align*}
    \|(t\partial_t,\partial_{\Theta})^{(n)}(g_Q^{\rm bump}-\delta,k_Q^{\rm bump})\|_{H_{\rm b}^{s,\delta}\times H_{\rm b}^{s-1,\delta+1}(\set{\abs{x} > 16})}=\mathcal{O}(t^2)=\mathcal{O}(\bfE).
\end{align*}
Thus
\begin{align*}
    \|\partial_Q(g_Q^{\rm bump},k_Q^{\rm bump})\|_{H_{\rm b}^{s,\delta}\times H_{\rm b}^{s-1,\delta+1}(\set{\abs{x} > 16})}&= \eps_1^{-2}\|\partial_{\td{Q}}(g_Q^{\rm bump},k_Q^{\rm bump})\|_{H_{\rm b}^{s,\delta}\times H_{\rm b}^{s-1,\delta+1}(\set{\abs{x} > 16})}\\
    &\lesssim\eps_{1}^{-2} \|\partial_{(\tau,\Theta)}(g_Q^{\rm bump},k_Q^{\rm bump})\|_{H_{\rm b}^{s,\delta}\times H_{\rm b}^{s-1,\delta+1}(\set{\abs{x} > 16})} = \mathcal{O}(1). \qedhere
\end{align*}
\end{proof}

To extend the range of allowed $\bfP$, we adjoin to the four-bump configuration two more bumps that carry most of the total linear momentum.
\begin{proposition}[Six-bump configuration] \label{prop:bump-six}
Fix $-1 < \dlt < - \frac{1}{2}$ and $\tht \in (0, \frac{\pi}{2})$, and define $C^{(6)}_{\tht} = C^{(4)}_{\tht} \cup C_{\tht}(\bfe_{3}) \cup C_{\tht}(-\bfe_{3})$. Given $s > \frac{3}{2}$ and $\Gmm \geq 1$, there exist $\eps_{b} = \eps_{b}(s, \Gmm) > 0$ and $\mu_{b} = \mu_{b}(s, \Gmm)$ such that the following holds. Consider
\begin{gather*}
\calU_{\Gmm} = \set*{Q \in \bbR^{10}:
	\bfE > \abs{\bfP}, \quad
	\tfrac{\bfE}{\sqrt{\bfE^{2} - \abs{\bfP}^{2}}} < 2 \Gmm, \quad
	\bfE < \eps_{b}^{2}, \quad
	\abs{\bfC} + \abs{\bfJ} < \mu_{b} \bfE}.
\end{gather*}
For each $Q \in \calU_{\Gmm}$, there exists $(g_Q^{\rm bump; \Gmm},k_Q^{\rm bump; \Gmm})\in C^\infty(\bbR^{3})$ such that
    \begin{align*}
 \supp       (g_Q^{\rm bump; \Gmm} - \dlt,k_Q^{\rm bump; \Gmm}) \subseteq C_{\tht}^{(6)} \cap \set{\abs{x} > 8} \quad \text{and}\quad
       \bfQ[(g_Q^{\rm bump; \Gmm},k_Q^{\rm bump; \Gmm}); A_{16}]=Q.
    \end{align*}
Moreover,
\begin{align}\label{eq:size-bump-Gmm}
    \|(g_Q^{\rm bump; \Gmm}-\delta,k_Q^{\rm bump; \Gmm})\|_{H^{s,\delta}_{\rm b}\times H_{\rm b}^{s-1,\delta+1}}&\aleq_s \sqrt{\bfE}, \\
    \|\rd_{Q} (g_Q^{\rm bump; \Gmm},k_Q^{\rm bump; \Gmm})\|_{H^{s,\delta}_{\rm b}\times H_{\rm b}^{s-1,\delta+1}} &\aleq_{s} \frac{1}{\sqrt{\bfE}}. \label{eq:Lip-bump-Gmm}
\end{align}
 Moreover, in $\set{\abs{x} > 16}$, we have the improved bounds
\begin{align}
    \|(g_Q^{\rm bump; \Gmm} - \dlt,k_Q^{\rm bump; \Gmm})\|_{H^{s,\delta}_{\rm b}\times H_{\rm b}^{s-1,\delta+1}(\set{\abs{x} > 16})} & \aleq_{s} \bfE, \label{eq:size-extr-bump-Gmm} \\
    \|\rd_{Q} (g_Q^{\rm bump; \Gmm},k_Q^{\rm bump; \Gmm})\|_{H^{s,\delta}_{\rm b}\times H_{\rm b}^{s-1,\delta+1}(\set{\abs{x} > 16})}&\lesssim_s 1. \label{eq:Lip-extr-bump-Gmm}
\end{align}
\end{proposition}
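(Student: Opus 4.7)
The plan is to augment the four-bump configuration from Proposition~\ref{prop:bump-four} with a pair of additional bumps from Proposition~\ref{prop:bump-single} placed at $\pm 12\bfe_3$ with a common velocity $\ell_{\rm ax}\in\bbR^3$. These axial bumps, supported in $C_\tht(\pm\bfe_3)\cap A_8$, will carry the bulk of $\bfE$ together with all of $\bfP$; the $\pm 12\bfe_3$ position symmetry combined with the common velocity makes their contributions to $\bfC$ and to $\bfJ$ cancel to leading order. The four-bump piece in $C^{(4)}_\tht$ is then used to absorb a small residual energy $E_{\rm res}$ (of order $\bfE/\Gmm^2$) together with $\bfC$, $\bfJ$ and the small nonlinear errors produced by the axial pair.

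More concretely, for parameters $(E_{\rm ax},P_{\rm ax})\in\bbR\times\bbR^3$ with $|P_{\rm ax}|<E_{\rm ax}$, I would set $\ell_{\rm ax}:=P_{\rm ax}/E_{\rm ax}$ and $t_{\rm ax}:=\sqrt{E_{\rm ax}\sqrt{1-|\ell_{\rm ax}|^2}/2}$ and invoke Proposition~\ref{prop:bump-single} (with $\Gmm$ there replaced by $3\Gmm$) at $\xi=\pm 12\bfe_3$ to obtain a two-bump piece $(g^{\rm ax},k^{\rm ax})$. By \eqref{eq:bump-single-E}--\eqref{eq:bump-single-J} its averaged charges over $A_{16}$ are $(E_{\rm ax},P_{\rm ax},0,0)+\calO_{b,\Gmm}(E_{\rm ax}^{3/2})$, with an additional non-cancelling $\calO(bE_{\rm ax})$ contribution to $\bfC$. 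Then I would fix $E_{\rm res}:=\bfE/(16\Gmm^2)$: the hypothesis $\bfE/\sqrt{\bfE^2-|\bfP|^2}<2\Gmm$ gives the crucial $\Gmm^{-2}$-margin $\bfE-|\bfP|>\bfE/(8\Gmm^2)=2E_{\rm res}$, which both ensures $\bfE-E_{\rm res}>|\bfP|$ and bounds the induced Lorentz factor $(\bfE-E_{\rm res})/\sqrt{(\bfE-E_{\rm res})^2-|\bfP|^2}$ by $2\sqrt{2}\Gmm$, well inside Proposition~\ref{prop:bump-single}'s validity regime. An inverse function theorem applied to the $\calO_{b,\Gmm}(\bfE^{1/2})$-perturbation-of-identity map $(E_{\rm ax},P_{\rm ax})\mapsto\bfQ[(g^{\rm ax},k^{\rm ax});A_{16}]_{(\bfE,\bfP)}$ solves uniquely for $(E_{\rm ax},P_{\rm ax})$ making the axial piece's energy-momentum charges exactly $(\bfE-E_{\rm res},\bfP)$. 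Setting $Q_4:=Q-\bfQ[(g^{\rm ax},k^{\rm ax});A_{16}]$, we have $\bfE(Q_4)=E_{\rm res}$, $\bfP(Q_4)=0$, $\bfC(Q_4)=\bfC+\calO(b\bfE)+\calO_{b,\Gmm}(\bfE^{3/2})$ and similarly for $\bfJ(Q_4)$. For $\mu_b,b,\eps_b$ chosen small enough relative to $\eps_{b,1}/\Gmm$, one verifies $Q_4\in\calU$, so Proposition~\ref{prop:bump-four} produces $(g^{\rm bump}_{Q_4},k^{\rm bump}_{Q_4})$ and the disjoint-support sum yields the desired $(g^{{\rm bump};\Gmm}_Q,k^{{\rm bump};\Gmm}_Q)$ with $\bfQ=Q$ exactly. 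The size bounds \eqref{eq:size-bump-Gmm}, \eqref{eq:size-extr-bump-Gmm} follow by adding the corresponding bounds from Propositions~\ref{prop:bump-single} and~\ref{prop:bump-four}, and the Lipschitz bounds \eqref{eq:Lip-bump-Gmm}, \eqref{eq:Lip-extr-bump-Gmm} follow from the chain rule once one notes $\rd_{E_{\rm ax}}t_{\rm ax}\aleq\Gmm/t_{\rm ax}$, $\rd_{P_{\rm ax}}\ell_{\rm ax}\aleq 1/E_{\rm ax}$, and (by the implicit function theorem) $\rd_Q(E_{\rm ax},P_{\rm ax})=\calO_\Gmm(1)$, $\rd_Q Q_4=\calO_\Gmm(1)$.

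The main obstacle is the simultaneous calibration of $E_{\rm res}$, $\mu_b$, $b$, and $\eps_b$ relative to $\Gmm$: $E_{\rm res}$ must be chosen large enough that the four-bump piece can absorb $\bfC$, $\bfJ$ and the $\calO(b\bfE)$ nonlinear errors while respecting Proposition~\ref{prop:bump-four}'s admissibility constraint $|\bfC(Q_4)|+|\bfP(Q_4)|+|\bfJ(Q_4)|<2\eps_{b,1}^2\bfE(Q_4)$, yet small enough that $\bfE-E_{\rm res}$ still comfortably exceeds $|\bfP|$ so that the axial bumps remain subluminal with Lorentz factor bounded by a multiple of $\Gmm$. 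The critical observation making both constraints simultaneously satisfiable is precisely the margin $\bfE-|\bfP|>\bfE/(8\Gmm^2)$ guaranteed by the hypothesis, which is exactly the order of magnitude at which $E_{\rm res}$ must be placed.
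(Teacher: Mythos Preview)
Your proposal is correct and follows essentially the same approach as the paper: an axial pair of bumps along $\pm\bfe_3$ carries $(\bfE-E_{\rm res},\bfP)$ with $E_{\rm res}\aeq\bfE/\Gmm^2$, and the four-bump configuration from Proposition~\ref{prop:bump-four} absorbs the residual. Your margin calculation $\bfE-\abs{\bfP}>\bfE/(8\Gmm^2)$ and the resulting Lorentz-factor bound $2\sqrt{2}\,\Gmm$ are correct, and your calibration of $b,\mu_b,\eps_b$ is sound.

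One small difference worth noting: the paper constructs the second axial bump by the exact inversion $(g_{t,\ell,\xi}(-x),-k_{t,\ell,\xi}(-x))$ rather than by an independent invocation of Proposition~\ref{prop:bump-single} in $C_\tht(-\bfe_3)$. This makes the axial pair's $\bfC$ and $\bfJ$ vanish \emph{exactly} by symmetry, eliminating your non-cancelling $\calO(b\,\bfE)$ residuals in $(\bfC,\bfJ)$. Your approach still works because those residuals are absorbable by the four-bump piece after choosing $b$ small (which you do), but the inversion trick is a bit cleaner and removes the need to track the $b$-dependence at that stage. The paper also parametrizes its inverse-function-theorem step in $(t,\ell)$ rather than your $(E_{\rm ax},P_{\rm ax})$, and chooses $E_{\rm res}=\bfE\bigl(1-\sqrt{1-(10\Gmm)^{-2}}\bigr)$; these are cosmetic differences.
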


\begin{proof}
We first apply Proposition~\ref{prop:bump-single} to place two boosted bumps in $C_{\tht}(\bfe_{3}) \cup C_{\tht}(-\bfe_{3})$ that carries most of $\bfE$ and all of $\bfP$, then use Proposition~\ref{prop:bump-four} to correct the remaining charges. To be more precise, let $(g_{t,\ell,\xi},k_{t,\ell,\xi})$ be the boosted bump supported in $C_\tht(\bfe_3)\cap \{|x|>8\}$ constructed in Proposition~\ref{prop:bump-single} and define the dual bump by
$$(g^*_{t,\ell^*,\xi^*}(x),k^*_{t,\ell^*,\xi^*}(x)):=(g_{t,\ell^*,-\xi^*}(-x),-k_{t,\ell^*,-\xi^*}(-x))\in C^\infty(C_\tht(-\bfe_3)).$$
Since the two bumps have disjoint supports, the superposition
\begin{align}
    (g^{(2)}_{t,\ell,\xi}-\delta,k^{(2)}_{t,\ell,\xi}):=(g_{t,\ell,\xi}-\delta,k_{t,\ell,\xi})+(g^*_{t,\ell,-\xi}-\delta,k^*_{t,\ell,-\xi})
\end{align}
solves \eqref{eq:constraint}. By inversion symmetry, we have
\begin{align*}
    \bfC[(g^{(2)}_{t,\ell,\xi},k^{(2)}_{t,\ell,\xi});A_{16}]=\bfJ[(g^{(2)}_{t,\ell,\xi},k^{(2)}_{t,\ell,\xi});A_{16}]=0.
\end{align*}
Moreover, we have
\begin{align*}
    (t\partial_t,\partial_\ell,\partial_\xi)^{(n)}((\bfE,\bfP)[(g^{(2)}_{t,\ell,\xi},k^{(2)}_{t,\ell,\xi});A_{16}]-2\gamma(\ell)t^2(1,\ell))=\mathcal{O}_{b,\Gamma}(t^3).
\end{align*}
Using inverse function theorem over the region
$$\mathcal{U}^{\bfE,\bfP}_\Gamma:=\set*{(\bfE,\bfP)\in \mathbb{R}^4:|\bfP|<\bfE<10\eps_b^2, \tfrac{\bfE}{\sqrt{\bfE^2-|\bfP|^2}}<10\Gamma},$$
it is easy to see we have a $4$-parameter family $(g^{(2)}_{\bfE,\bfP},k^{(2)}_{\bfE,\bfP})$ for $(\bfE,\bfP)\in \mathcal{U}^{\bfE,\bfP}_\Gamma$ with
\begin{align}
    (\bfE,\bfP)[(g^{(2)}_{\bfE,\bfP},k^{(2)}_{\bfE,\bfP});A_{16}]=(\bfE,\bfP),\quad (\bfC,\bfJ)[(g^{(2)}_{\bfE,\bfP},k^{(2)}_{\bfE,\bfP});A_{16}]=0.
\end{align}
Now we want to add the almost time-symmetric four-bump configuration in Proposition~\ref{prop:bump-four} to $(g^{(2)},k^{(2)})$:
\begin{equation}
    (g^{(6)}-\delta,k^{(6)}):=(g^{(2)}_{\td{E},\td{P}}-\delta,k^{(2)}_{\td{E},\td{P}})+(g^{\rm bump}_Q-\delta,k^{\rm bump}_Q),
\end{equation}
which again solves \eqref{eq:constraint}. The charges are given by
\begin{equation}
    (\bfE,\bfP)[(g^{(6)}-\delta,k^{(6)});A_{16}]=(\td{E}+E,\td{P}+P),\quad (\bfC,\bfJ)[(g^{(6)}-\delta,k^{(6)});A_{16}]=(C,J),
\end{equation}
for $(\td{E},\td{P})\in \mathcal{U}_\Gamma^{\bfE,\bfP}$ and $Q\in\mathcal{U}$. For $(\bfE,\bfP,\bfC,\bfJ)\in\mathcal{U}_\Gamma$, we take
\begin{equation}
\td{E}=\bfE\sqrt{1-(10\Gamma)^{-2}},\, E=\bfE-\td{E},\,\td{P}=\bfP,\,P=0,\,C=\bfC,\, J=\bfJ.
\end{equation}
Then it is easy to check $(\td{E},\td{P})\in \mathcal{U}_\Gamma^{\bfE,\bfP}$ and $Q=(E,P,C,J)\in\mathcal{U}$ for $\eps_b>0,\mu_b>0$ small enough. The estimates follow from Proposition~\ref{prop:bump-single} and Proposition~\ref{prop:bump-four}.
\end{proof}

\subsection{Extension procedures}
In this subsection, we prove extension lemmas for $(g_{in}, k_{in})$ and $(g_{out}, k_{out})$.
\begin{lemma} \label{lem:ext-outgoing}
Let $(g_{in}, k_{in}) \in H^{s} \times H^{s-1}(A_{1})$ be a pair solving \eqref{eq:constraint}. Fix any $-1 < \dlt < -\frac{1}{2}$, $\tht > 0$ and $\bfomg_{0} \in \bbS^{2}$. Provided that $\nrm{(g_{in}-\dlt, k_{in})}_{H^{s} \times H^{s-1}(A_{1})}$ is sufficiently small, there exists $(\td{g}_{in}, \td{k}_{in})$ solving \eqref{eq:constraint} on $\bbR^{3}$ such that
\begin{gather*}
(\td{g}_{in}, \td{k}_{in}) = (g_{in}, k_{in}) \hbox{ in } A_{1}, \quad
\supp (\td{g}_{in} - \dlt, \td{k}_{in}) \subseteq B_{4} \cup C_{\tht}(\bfomg_{0})  \\
\nrm{(\td{g}_{in}-\dlt, \td{k}_{in})}_{\Hb^{s, \dlt} \times \Hb^{s-1, \dlt+1}} \aleq_{s, \dlt, \tht} \nrm{(g_{in}-\dlt, k_{in})}_{H^{s} \times H^{s-1}(A_{1})}.
\end{gather*}
\end{lemma}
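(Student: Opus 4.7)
My plan is to prove Lemma~\ref{lem:ext-outgoing} by a Picard iteration closely modeled on the proof of Theorem~\ref{thm:gluing-annulus}, but replacing the Bogovskii operator on $A_1$ with a combination of an inner Bogovskii-type operator on the ball $B_1$ (disjoint from $A_1$) and the conic solution operator $\vec{S}_c$ of Lemma~\ref{lem:conic} for the outward extension into $C_{\tht}(\bfomg_0)$. The conic operator is essential whenever $Q_{in} := \bfQ[(g_{in}, k_{in}); A_1] \neq 0$, since conservation of $\bfQ$ (Lemma~\ref{lem:charge}) forces the support of any constraint-solving extension to reach infinity in order to carry nontrivial charges.

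First, I would form the trial extension
\begin{equation*}
(\br{h}_Q, \br{\pi}_Q) = \chi \, (h_{in}^{ext}, \pi_{in}^{ext}) + (\alp_Q, \bt_Q),
\end{equation*}
parameterized by $Q \in \bbR^{10}$. Here $\chi$ is a smooth radial cutoff with $\chi = 1$ on $A_1$ and $\supp \chi \subseteq B_3 \setminus \br{B_{1/2}}$; $(h_{in}^{ext}, \pi_{in}^{ext})$ is a bounded Sobolev extension of $(h_{in}, \pi_{in})$ to $\bbR^3$; and $(\alp_Q, \bt_Q) \in C_c^\infty(B_{1/4})$ is a smooth $10$-parameter family solving the linearized constraint with prescribed charges $-Q$ on any sphere enclosing $B_{1/4}$. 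Such $(\alp_Q, \bt_Q)$ may be built as smoothly truncated linearized Schwarzschild/Kerr initial data, carrying the ten charges independently. The trial matches $(h_{in}, \pi_{in})$ on $A_1$, is supported in $B_4$, and encodes a free $10$-dimensional charge adjustment.

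The constraint error $\vec{F}_Q := \vec{P}(\br{h}_Q, \br{\pi}_Q) - \vec{N}(\br{h}_Q, \br{\pi}_Q)$ is supported in $(B_3 \setminus B_2) \cup (B_1 \setminus B_{1/2}) \cup B_{1/4}$, with inner part $\vec{F}_{Q, in}$ in $B_1$ and outer part $\vec{F}_{Q, out}$ in $B_3 \setminus B_2$. For the inner correction, I apply the Bogovskii-type operator of Lemma~\ref{lem:bogovskii-0} on $B_1$, giving a correction supported strictly in $B_1$ and thus disjoint from $A_1$. The ten integrability conditions, translated via Lemma~\ref{lem:charge}, reduce to the statement that the charges of $\br{h}_Q$ on $\partial B_1$ (equaling the charges of $h_{in}$ on $\partial B_1$ minus $Q$, plus nonlinear corrections of order $\calO(\nrm{h_{in}}^2)$) vanish. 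This fixes $Q$ uniquely by a Banach fixed-point argument, exactly analogous to Step~3 of the proof of Theorem~\ref{thm:gluing-annulus}. For the outer correction, I apply the conic solution operator $\vec{S}_c$, which requires no integrability condition and produces support extending outward into $C_{\tht}(\bfomg_0)$; this is where the charges $Q_{in}$ are carried to infinity. The full nonlinear Picard iteration then closes for small data in $\Hb^{s, \dlt} \times \Hb^{s-1, \dlt+1}$, using the mapping properties in Lemmas~\ref{lem:bogovskii-0} and~\ref{lem:conic}.

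The main obstacle is that $\vec{F}_{Q, out}$ is naturally supported in the full annulus $B_3 \setminus B_2$, not just in a cone direction, whereas $\vec{S}_c$ only produces support in $\supp \vec{F}_{Q, out} + C_\omg$. To keep the output in $C_{\tht}(\bfomg_0) \cup B_4$, I would pre-split $\vec{F}_{Q, out}$ via a smooth angular cutoff: a piece supported in $C_{\tht'}(\bfomg_0) \cap (B_3 \setminus B_2)$ for some $\tht' < \tht$, processed by $\vec{S}_c$ with a sufficiently thin convex $\omg$ so the result lies in $C_\tht(\bfomg_0)$, and a remainder supported in $(B_3 \setminus B_2) \setminus C_{\tht'}(\bfomg_0) \subset B_4$, absorbed by an auxiliary Bogovskii correction on the annulus $B_4 \setminus B_2$. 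Balancing the integrability conditions for these three operators is additional bookkeeping but introduces no new obstructions, since the inner charge parameters $Q$ supply enough freedom.
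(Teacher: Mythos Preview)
There is a genuine gap in your construction of $(\alp_Q,\bt_Q)$. You require a pair in $C_c^\infty(B_{1/4})$ solving the linearized constraint and carrying prescribed charges $-Q$ on spheres enclosing $B_{1/4}$, but no such object exists: the charges \eqref{eq:charge-E}--\eqref{eq:charge-J} are boundary integrals on $\rd B_r$, so any data that vanishes near $\rd B_r$ contributes zero charge there. A truncated linearized Kerr that actually carries nontrivial charges cannot be compactly supported; it must extend to infinity, be singular at the origin, or fail the linearized constraint in the truncation zone. More to the point, adding \emph{anything} supported in $B_{1/4}$ to the trial leaves the charges on $\rd B_1$ unchanged, so your parameter $Q$ never enters the integrability condition for the inner Bogovskii on $B_1$. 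That condition (via Lemma~\ref{lem:charge}) forces the charges of the trial on $\rd B_1$---which are just the $\calO(\eps)$ charges of $(g_{in},k_{in})$---to equal an $\calO(\eps^2)$ nonlinear integral over $B_1$, and this fails for generic data. The auxiliary Bogovskii on $B_4\setminus B_2$ in your last paragraph then introduces ten further conditions with no additional free parameters.

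The paper's proof sidesteps integrability conditions entirely, and this is the idea you are missing. Instead of three regional corrections with parameters to balance, it builds one right inverse $\vec{S}_{out}$ for $\vec{P}$ on $A_2\cup C_\tht(\bfomg_0)$ requiring no orthogonality hypothesis on the source. The construction chains Bogovskii-type operators along a sequence of overlapping star-shaped subregions of $(A_2\cup C_\tht)\cap B_8$, arranged so that the residual error is funneled into a fixed ball $B_0\subset C_\tht(\bfomg_0)$; then the conic operator $\vec{S}_c$, which by $(S_c2)$ is an exact right inverse with no side conditions, ships that residual out to infinity along the cone. A single Picard iteration in $\Hb^{s,\dlt}\times\Hb^{s-1,\dlt+1}$ then closes, with the correction supported away from $A_1$. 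Your first paragraph correctly identifies that the cone must carry the charges $Q_{in}$ to infinity; the paper's device is to route \emph{all} of the error into the cone first, so that the unbounded region absorbs the obstruction and no parameter family is needed.
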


\begin{proof} {\bf Step~1.} The first step is to construct a solution operator $\vec{S}_{out}$ for $\vec{P}$ (i.e., $\vec{P} \vec{S}_{out} = I$) with the mapping property
\begin{equation*}
\vec{S}_{out} : (\Hb^{s-2,\delta+2})_{0}(A_2\cup C_\tht(\bfomg_0))\to (\Hb^{s,\delta}\times \Hb^{s-1,\delta+1})_{0}(A_2\cup C_\tht(\bfomg_0)).
\end{equation*}

We begin by constructing a Bogovskii-type operator $\vec{S}_B$ for the region $\Omega:=(A_2\cup C_\tht(\bfomg_0)) \cap \{|x|<8\}$ such that $\vec{P}\vec{S}_B f =f $ in $\Omega\setminus C_\tht(\bfomg_0)$ for $f\in C_c^\infty(\Omega)$. To achieve this, we carry out an argument similar to the proof of Lemma~\ref{lem:bogovskii} above. We begin with covering $\Omg$ by finitely many regions $\Omg_j, j=0,1,\cdots,J$ star-shaped with respect to small balls $B_j\subset \Omg_j$ such that $B_0\subset C_\tht(\bfomg_0)$ and the following joins of consecutive balls lie in $\Omg$:
\begin{align*}
        {\rm ch}(B_j,B_{j+1}):=\{tx+(1-t)y:x\in B_j, y\in B_{j+1}, t\in[0,1]\}\subset \Omg,\quad j=0,1,\cdots,J-1.
    \end{align*}
    Let $\vec{S}_j$ be the Bogovskii-type operator on $\Omg_j$ defined in Lemma~\ref{lem:bogovskii-0} and $\chi_j$ be a partition of unity with respect to $\Omega_j$. First of all, $u_j=\vec{S}_j(\chi_j f)$ solves the equation $\vec{P}u_j=\chi_j f$ outside $B_j$.  Note $\vec{S}_j$ is also well-defined on the star-shaped region ${\rm ch}(B_j,B_{j+1})$,
    we correct the  errors inductively:
    \begin{equation*}
    \begin{aligned}
        &v_J:=0, \quad w_J:=0,\quad v_{j-1}:=\vec{S}_{j-1} (\chi_j f-\vec{P}u_j+w_j)\in C_c^\infty({\rm ch}(B_{j-1},B_j)),\\
        &w_{j-1}:=(\chi_j f-\vec{P}u_j+w_j)-\vec{P}v_{j-1}\in C_c^\infty(B_{j-1}),\quad j=1,2,\cdots,J.
    \end{aligned}
    \end{equation*}
Once we define $u:=\sum\limits_{j=0}^J(u_j+v_j)$, it is easy to see $\vec{P} v_k=0$ and $\vec{P}u=f$ outside $\cup B_j$. For $x\in B_k, k=1,2,\cdots,J$, we note $\vec{P}v_j(x)=0$ for $j\neq k, k-1$ and thus
\begin{align*}
    \vec{P}u(x)&=\sum\limits_{j\neq k} \chi_j f+\vec{P}u_k(x)+\vec{P}v_k(x)+\vec{P}v_{k-1}(x)\\
    &=\sum\limits_{j\neq k} \chi_j f+\vec{P}u_k(x)+\vec{P}v_k(x)+\chi_k f-\vec{P}u_k(x)+w_k(x)\\
    &=f(x)+(\chi_{k+1}f-\vec{P}u_{k+1}+w_{k+1})(x)=f(x).
\end{align*}
We conclude that $\vec{P}u=f$ outside $B_0\subset C_\tht(\bfomg_0)$.

Next, let $\vec{S}_{c}$ be the conic solution operator adapted to the cone $C_\tht(\bfomg_0)$ so that $\vec{P}\vec{S}_{c}f=f$ for $f\in C_c^\infty( C_\tht(\bfomg_0) )$. Let $\chi\in C_c^\infty(\{|x|<8\})$ be a cutoff function such that $\chi(x)=1$ near $A_2$. We then define
$$\vec{S}_{out} f := \vec{S}_c(f-P\vec{S}_B (\chi f))+\vec{S}_B (\chi f): (\Hb^{s-2,\delta+2})_{0}(A_2\cup C_\tht(\bfomg_0))\to (\Hb^{s,\delta}\times \Hb^{s-1,\delta+1})_{0}(A_2\cup C_\tht(\bfomg_0)),$$
which obeys $\vec{P}\vec{S}_{out} f =f$ for any $f\in C_c^\infty(A_2\cup C_\tht(\bfomg_0))$ as desired.

\smallskip
\noindent {\bf Step~2.} Let $(h_{in},\pi_{in})$ be the initial data corresponding to $(g_{in},k_{in})$ under the transformation \eqref{eq:hpi-def}, and we can freely extend $(h_{in},\pi_{in})$ to $H^s_{0}\times H^{s-1}_{0}(\set{|x|<4})$ (with control on the norm). We then consider the following fixed point problem:
\begin{align*}
    (\td{h},\td{\pi})=\vec{S}_{out}(\vec{N}( h_{in}+\td{h}, \pi_{in}+\td{\pi})-\vec{P}(h_{in},\pi_{in})).
\end{align*}
By the Banach fixed point theorem, there exists a unique solution $(\td{h},\td{\pi})\in (\Hb^{s,\delta}\times \Hb^{s-1,\delta+1})_{0}(A_2\cup C_\tht(\bfomg_0))$ such that $\|(\td{h},\td{\pi})\|_{H_{\rm b}^{s,\delta}\times H_{\rm b}^{s-1,\delta+1}}\lesssim \|(h_{in},\pi_{in})\|_{H^s\times H^{s-1}(A_1)}$.
Define $(\td{g}_{in},\td{k}_{in})$ to be the solution corresponding to $(h_{in}+\td{h},\pi_{in}+\td{\pi})$. Since $(\td{h},\td{\pi})$ is supported outside $A_{1}$, we have $(\td{g}_{in},\td{k}_{in})=(g_{in},k_{in})$ in $A_{1}$.
\end{proof}

\begin{lemma} \label{lem:ext-ingoing}
Let $(g_{out}, k_{out}) \in H^{s} \times H^{s-1}(A_{2 R})$ be a pair solving \eqref{eq:constraint}. If $\nrm{(g_{out}-\dlt, k_{out})}_{H^{s} \times H^{s-1}(A_{2 R})}$ is sufficiently small, there exists $(\td{g}_{out}, \td{k}_{out})$ solving \eqref{eq:constraint} on $\td{A}_{R}$ such that
\begin{gather*}
(\td{g}_{out}, \td{k}_{out}) = (g_{out}, k_{out}) \hbox{ in } A_{2R}, \quad
\nrm{(\td{g}_{out}-\dlt, \td{k}_{out})}_{\dot{\calH}^{s} \times \dot{\calH}^{s-1}(\td{A}_{R})} \aleq_{s} \nrm{(g_{out}-\dlt, k_{out})}_{H^{s} \times H^{s-1}(A_{2 R})}.
\end{gather*}
\end{lemma}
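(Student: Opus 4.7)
\smallskip

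The approach is to rescale to unit scale and then adapt the proof of Theorem~\ref{thm:gluing-annulus}(2) (interior gluing), using a 10-parameter family of genuine solutions (e.g.\ rescaled exterior Kerr initial data from Lemma~\ref{lem:kerr-extr}) in place of the missing inner data.

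\smallskip

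\textbf{Step 1 (Rescaling).} By the invariant scaling \eqref{eq:scaling}, reduce to $R = 1$, so the task is to extend $(g_{out}, k_{out}) \in H^{s} \times H^{s-1}(A_{2})$ inward to $\td{A}_{1} = A_{\frac{1}{2}} \cup A_{1} \cup A_{2}$, with $\eps \ceq \nrm{(g_{out} - \dlt, k_{out})}_{H^{s} \times H^{s-1}(A_{2})}$ small.

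\smallskip

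\textbf{Step 2 (Naive smooth extension).} Transform to linearized variables $(h_{out}, \pi_{out})$ via \eqref{eq:hpi-def} and take any standard Sobolev extension $(\br{h}, \br{\pi}) \in H^{s} \times H^{s-1}(\td{A}_{1})$ with $(\br{h}, \br{\pi}) = (h_{out}, \pi_{out})$ on $A_{2}$ and norm $\aleq \eps$. In general $(\br{h}, \br{\pi})$ does \emph{not} solve the linearized (or nonlinear) constraint equation on $\td{A}_{1} \setminus A_{2}$; it is merely a naive smooth filler.

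\smallskip

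\textbf{Step 3 (Admissible family from Kerr).} For $Q \in \bbR^{10}$ with $\bfE(Q) > \abs{\bfP(Q)}$ and $\abs{Q} \aleq \eps^{2}$, the Kerr exterior family $(g_{Q}^{Kerr}, k_{Q}^{Kerr})$ of Lemma~\ref{lem:kerr-extr} is defined and smooth on $\td{A}_{1}$ once $\eps$ is sufficiently small (since $C_R(\gmm)(\abs{\bfM} + \abs{\bfM}^{-1}\abs{(\bfC,\bfJ)}) \ll \frac{1}{2}$ for such $Q$). As in the proof of Theorem~\ref{thm:corvino-schoen}, reparametrize this by the averaged charges on $A_{\frac{1}{2}}$, via the inverse function theorem, to obtain an $\calQ$-admissible family in the sense of Definition~\ref{def:adm-extr} for some open $\calQ \ni 0$ of size $\aeq \eps^{2}$.

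\smallskip

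\textbf{Step 4 (Adapt proof of Theorem~\ref{thm:gluing-annulus}(2)).} Run Steps 1--3 of the proof of Theorem~\ref{thm:gluing-annulus} with the smooth filler $(\br g, \br k)$ from Step 2 as the ``outer'' data and the Kerr family from Step 3 as the ``inner'' admissible family. Using the cutoff $\chi$ from that proof, form the blend
\begin{equation*}
    (\br{h}_{Q}, \br{\pi}_{Q}) = (1-\chi)(h_{Q}^{Kerr}, \pi_{Q}^{Kerr}) + \chi(\br{h}, \br{\pi}).
\end{equation*}
The crucial observation is that, although $(\br h, \br \pi)$ fails to solve the constraint on $A_{\frac{1}{2}} \cup A_{1}$, the residual error $\vec{P}(\br{h}_{Q}, \br{\pi}_{Q}) - \vec{N}(\br{h}_{Q}, \br{\pi}_{Q})$ is still supported in $A_{1}$: on $A_{\frac{1}{2}}$ the blend coincides with $(h_{Q}^{Kerr}, \pi_{Q}^{Kerr})$ which solves the constraint, and on $A_{2}$ it coincides with $(h_{out}, \pi_{out})$ which solves the constraint. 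Thus the proof of Theorem~\ref{thm:gluing-annulus} proceeds verbatim: Picard iteration based on the Bogovskii-type operator $\vec{S}$ on $A_{1}$ produces a correction $(\td{h}, \td{\pi})$ supported in $A_{1}$, and a Banach fixed-point argument determines $Q \in \calQ$ enforcing the 10 integral compatibility conditions (this is where the Kerr parametrization supplies the required 10 degrees of freedom).

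\smallskip

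\textbf{Main obstacle.} The statement of Theorem~\ref{thm:gluing-annulus} assumes $(\mathring g, \mathring k)$ solves \eqref{eq:constraint} on all of $\td{A}_{1}$, which fails here for the naive filler $(\br g, \br k)$. The non-trivial point is therefore verifying that its proof only uses the constraint equation on $A_{\frac{1}{2}}$ (from the inner admissible family) and on $A_{2}$ (from the outer data), never in the blending annulus $A_{1}$ itself. A direct inspection of the derivation of the source terms $F_{Q}, G_{Q}$ and the charge-flux identities \eqref{eq:tdh-charge-f}--\eqref{eq:tdpi-charge-f} confirms this, so the argument goes through unchanged and yields $(\td{g}_{out}, \td{k}_{out})$ with all the required properties, the norm bound $\aleq \eps$ following from Theorem~\ref{thm:gluing-annulus} combined with the extension bound in Step 2.
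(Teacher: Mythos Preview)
Your proposal has a genuine gap in Step~3. The Kerr exterior family of Lemma~\ref{lem:kerr-extr} is only defined for $Q$ with $\abs{\bfE(Q)} > \abs{\bfP(Q)}$. In the fixed-point problem of Theorem~\ref{thm:gluing-annulus}(2) the parameter $Q$ must lie within $M_c \eps^2$ of $\mathring{Q}$, which is determined by the charges of the given data $(g_{out}, k_{out})$. Since this data is an \emph{arbitrary} small solution on $A_2$, one has $\abs{\mathring{Q}} = O(\eps)$ (not $O(\eps^2)$ as you write) and $\mathring{Q}$ may well satisfy $\abs{\bfE(\mathring{Q})} \leq \abs{\bfP(\mathring{Q})}$; in that regime no Kerr data with the required charges exists. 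Even when $\mathring{Q} \in \calE$, the Lorentz factor $\gmm$ is uncontrolled, so the domain condition $C_R(\gmm)(\abs{\bfM}+\abs{\bfM}^{-1}\abs{(\bfC,\bfJ)}) < \tfrac{1}{2}$ can fail. Your ``open $\calQ \ni 0$ of size $\aeq \eps^2$'' simply does not contain $\mathring{Q}$ in general. (A secondary issue: your inspection in the ``Main obstacle'' paragraph is also inaccurate --- the derivation of \eqref{eq:tdh-charge-f} shifts the outer charge from $A_2$ to $A_1$ by integrating the constraint for $(\mathring{g},\mathring{k})$ against $\chi_{1,2}$, whose support includes $A_1$ where your filler does not solve it. This defect is repairable by leaving the outer charge at $A_2$; the Kerr domain issue is not.)

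The paper's proof is far simpler and sidesteps the ten obstructions entirely. The key observation is that the extension need only solve \eqref{eq:constraint} on $\td{A}_1$, not on all of $B_4$, so one may \emph{dump} the obstructions into the inner ball $B_{1/2}$. Concretely, after freely extending $(h_{out},\pi_{out})$ to $B_4$, one applies the star-shaped Bogovskii operator $\vec{S}_{in}$ of Lemma~\ref{lem:bogovskii-0} on $B_2$ with $\eta \in C_c^\infty(B_{1/4})$. The kernel computation there shows that $\vec{P}\vec{S}_{in} f - f$ is a combination of $\eta$ and $\rd\eta$ (with coefficients given by the obstruction integrals), hence supported in $B_{1/4}$. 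A single Picard iteration $(\td{h},\td{\pi}) = \vec{S}_{in}\bigl(\vec{N}(h_{out}+\td{h},\pi_{out}+\td{\pi}) - \vec{P}(h_{out},\pi_{out})\bigr)$ then produces a correction in $(H^s \times H^{s-1})_0(B_2)$ for which the constraint holds outside $B_{1/4}$, in particular on $\td{A}_1$. No admissible family is needed at all.
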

\begin{proof}
    We may assume $R=1$ by rescaling. First we may freely extend $(g_{out},k_{out})$ to $H^s(\set{|x|\leq 4})$ (with control on norm). Let $\vec{S}_{in}$ be the Bogovskii operator on $B_2$ as in Lemma \ref{lem:bogovskii-0} with $\eta\in C_c^\infty(\{|x|<1/4\})$. We consider the following fixed point problem:
    \begin{align*}
        (\td{h},\td{\pi})=\vec{S}_{in}(\vec{N}(h_{in}+\td{h},\pi_{in}+\td{\pi})-\vec{P}(h_{in},\pi_{in})).
    \end{align*}
By the Banach fixed point theorem, there exists a unique solution $(\td{h},\td{\pi})\in (H^s\times H^{s-1})_{0}(B_2)$. Define $(\td{g}_{out},\td{k}_{out})$ to be the solution corresponding to $(h_{out}+\td{h},\pi_{out}+\td{\pi})$. By the support of $\eta$, $(\td{g}_{out},\td{k}_{out})$ solves the \eqref{eq:constraint} on $\td{A}_1$. Since $\supp (\td{h},\td{\pi}) \subseteq B_{2}$, we have $(\td{g}_{out},\td{k}_{out})=(g_{out},k_{out})$ in $A_2$.
\end{proof}

\subsection{Proof of Theorem~\ref{thm:obs-free-unit}} \label{subsec:obs-free-unit}

Fix $-1 < \dlt < - \frac{1}{2}$ and $\tht \in (0, \frac{\pi}{8})$. Let $\calU_{\Gmm}(\Dlt \bfE) = \set{\Dlt Q' \in \calU_{\Gmm} : \frac{1}{2} \Dlt \bfE \leq \bfE(\Dlt Q') \leq 2 \Dlt \bfE}$. Given $\Dlt Q' \in \calU_{\Gmm}(\Dlt \bfE)$, by Proposition~\ref{prop:bump-six}, there exists a six-bump initial data $(g_{\Dlt Q'}^{\rm bump; \Gmm}-\delta,k_{\Dlt Q'}^{\rm bump; \Gmm})$ supported in $C_{\tht}^{(6)} \setminus \br{B_{4}}$. By Lemma~\ref{lem:ext-outgoing}, given any $\bfomg_{0} \in \bbS^{2}$, there also exists $(\td{g}_{in}-\delta,\td{k}_{in}) \in \Hb^{s, \dlt} \times \Hb^{s-1, \dlt+1}$ extending $(g_{in} - \dlt ,k_{in})$ which is supported in $B_{4} \cup C_{\tht}(\bfomg_{0})$. By an appropriate choice of $\bfomg_{0}$, we may ensure that $C_{\tht}^{(6)}$ and $C_{\tht}(\bfomg_{0})$ have disjoint closures, so that the same holds for $C_{\tht}^{(6)} \setminus \br{B_{4}}$ and $C_{\tht}(\bfomg_{0}) \cup B_{4}$. We may thus define
\begin{align*}
    (g_{\Dlt Q'}-\delta,k_{\Dlt Q'}):=(\td{g}_{in}-\delta,\td{k}_{in})+(g_{\Dlt Q'}^{\rm bump; \Gmm}-\delta,k_{\Dlt Q'}^{\rm bump; \Gmm}),
\end{align*}
which extends $(g_{in} - \dlt, k_{in})$, belongs to $\Hb^{s, \dlt} \times \Hb^{s-1, \dlt+1}(\bbR^{3})$ and solves \eqref{eq:constraint} by the disjoint support property.
Moreover, by Lemma~\ref{lem:charge}, we have
\begin{align*}
    \bfQ[(g_{\Dlt Q'},k_{\Dlt Q'});A_{16}]
    &=\bfQ[(g_{\Dlt Q'}^{\rm bump; \Gmm},k_{\Dlt Q'}^{\rm bump; \Gmm});A_{16}]+\bfQ[(\td{g}_{in},\td{k}_{in});A_{16}] \\
    &=\Dlt Q'+\bfQ[(g_{in},k_{in});A_1]+\mathcal{O}_{\Gmm}(\mu_o \Delta\bfE),
 \end{align*}
while
 \begin{align*}
   \bfQ[(g_{\Dlt Q'},k_{\Dlt Q'});A_{16}] - \bfQ[(g_{\Dlt Q''},k_{\Dlt Q''});A_{16}]
   = \Dlt Q' - \Dlt Q''.
\end{align*}
In particular, the map $T: \calU_{\Gmm}(\Dlt \bfE) \to \bbR^{10}$, $\Dlt Q' \mapsto \bfQ[(g_{\Dlt Q'}, k_{\Dlt Q'}); A_{16}]$ is bi-Lipschitz.

Next, by Lemma~\ref{lem:ext-ingoing}, there exists $(\td{g}_{out},\td{k}_{out})$ on $\td{A}_{16\Gamma}$ extending $(g_{out},k_{out})$. Again by Lemma~\ref{lem:charge},
\begin{align*}
    \bfQ[(\td{g}_{out},\td{k}_{out});A_{16}]=\bfQ[(g_{out},k_{out});A_{32}]+\mathcal{O}_{\Gmm}(\mu_o \Delta\bfE).
\end{align*}

By the preceding identities and the definition of $\Dlt Q$, we have
\begin{align*}
T(\Dlt Q') = \bfQ[(g_{\Dlt Q'},k_{\Dlt Q'});A_{16}] = \bfQ[(\td{g}_{out},\td{k}_{out});A_{16}] + (\Dlt Q' -\Dlt Q) +\mathcal{O}_{\Gmm}(\mu_o \Delta\bfE).
\end{align*}
Choosing $\mu_{o}$ even smaller depending on $\mu_{b}$ and $\Gmm$, we may apply the inverse function theorem to ensure that the image of the bi-Lipschitz map $T(\calU_{\Gmm})$ covers a ball of radius $\calO_{\Gmm}(\mu_{o} \Dlt \bfE)$ around $\bfQ[(\td{g}_{out},\td{k}_{out});A_{16}]$; here, it is important that $\calU_{\Gmm}$ in Proposition~\ref{prop:bump-six} is larger than the subset of $\bbR^{10}$ defined by \eqref{eq:obs-free-unit:EP}--\eqref{eq:obs-free-unit:CJ}. By inverting this bi-Lipschitz map and composing with $(g_{\Dlt Q'}, k_{\Dlt Q'})$, we may produce an admissible family on $A_{16}$ verifying \eqref{eq:gluing-q}--\eqref{eq:gluing-adm-lip} with $\eps = \calO_{\Gmm}(\mu_{o}^{\frac{1}{2}} (\Dlt \bfE)^{\frac{1}{2}})$ and $K = \calO_{\Gmm}(1)$; that $(\td{g}_{out}, \td{k}_{out})$ verifies \eqref{eq:gluing-epss} is also clear. Choosing $\eps_{o}$ sufficiently small, we can apply Theorem~\ref{thm:gluing-annulus} and conclude the proof. \hfill \qedsymbol

\subsection{Proof of Theorem~\ref{thm:obs-free-af}} \label{subsec:obs-free-af}
Given the two asymptotically flat solutions, we first rescale them using \eqref{eq:scaling}. Arguing as in the proof of \eqref{eq:extr-r} in the proof of Theorem~\ref{thm:corvino-schoen}, we have
\begin{align*}
    (\bfE,\bfP)[(g_{\Box}^{(r)},k^{(r)}_{\Box});A_{R_{\Box}}] &= r^{-1}(\bfE,\bfP)[(g_{\Box},k_{\Box});A_{r R_{\Box}}]=r^{-1}(\bfE^{ADM},\bfP^{ADM})[(g_{\Box},k_{\Box})]+\mathcal{O}(r^{-2\alpha}), \\
    (\bfC,\bfJ)[(g_{\Box}^{(r)},k^{(r)}_{\Box});A_{R_{\Box}}]&= r^{-2}(\bfC,\bfJ)[(g_{\Box},k_{\Box});A_{rR_{\Box}}]=\mathcal{O}(r^{-2\alpha}),
\end{align*}
for $\Box = in$ or $out$, where $R_{in} = 1$ and $R_{out} = 32$.
Therefore, for sufficient large $r\geq R_0$, we verify the conditions \eqref{eq:obs-free-unit:EP}--\eqref{eq:obs-free-unit:CJ}.
The asymptotic flatness condition \eqref{eq:w-af} implies \eqref{eq:obs-free-unit:data}, since
\begin{align*}
    \|(g_{in}^{(r)}-\delta,k_{in}^{(r)})\|_{H^s\times H^{s-1}(A_1)}^2+\|(g_{out}^{(r)}-\delta,k_{out}^{(r)})\|_{H^s\times H^{s-1}(A_{32})}^2\lesssim r^{-2\alpha}<\mu_o \Delta\bfE
\end{align*}
for sufficiently large $r \geq R_{0}$. We can then apply Theorem~\ref{thm:obs-free-unit} to conclude the proof. \hfill \qedsymbol

\appendix
\section{Proof of Lemma~\ref{lem:kerr-extr}} \label{sec:kerr-extr}
For simplicity, we omit the superscript ${}^{Kerr}$ below. Following \cite{ChrCorIse}, we use the Kerr-Schild coordinates $(x^{0}, x^{1}, x^{2}, x^{3})$, with respect to which
\begin{equation} \label{eq:kerr-spt}
	(\bfg_{M, a})_{\mu \nu} = \bfeta_{\mu \nu} + \frac{2M r^{3}}{r^{4} + a^{2} (x^{3})^{2}} \bfk_{\mu} \bfk_{\nu}, \quad
	\bfk = \ud x^{0} + \frac{r x^{1} + a x^{2}}{r^{2} + a^{2}} \ud x^{1} + \frac{r x^{2} - a x^{1}}{r^{2} + a^{2}} \ud x^{2} + \frac{x^{3}}{r} \ud x^{3}.
\end{equation}
where $r$ is implicitly defined by $r^{2} ((x^{1})^{2} + (x^{2})^{2}) + (r^{2} + a^{2}) (x^{3})^{2} = r^{2} (r^{2} + a^{2})$.

According to \cite[Appendix~F]{ChrDel}, there exists a map
\begin{equation*}
\calE \to \bbR \times SO^{+}(1, 3) \times \bbR \times SO(3) \times \bbR^{3}, \quad Q = (\bfE, \bfP, \bfJ, \bfC) \mapsto (M(\bfE, \bfP), \Lmb(\bfE, \bfP), a(Q), R(Q), \xi(Q))
\end{equation*}
such that $(g_{Q}, k_{Q}) = (g_{M, a}^{(R, \xi, \Lmb)}, k_{M, a}^{(R, \xi, \Lmb)})$, where the right-hand side is the initial data set induced from $\bfg_{M, a}$ on the hypersurface $\set{y^{0} = 0}$, where $y^{\mu} = \tensor{\Lmb}{^{\mu}_{\nu}}(\tensor{R}{^{\nu}_{\lmb}} x^{\lmb} + \bfxi^{\nu})$ (i.e., rotate, translate, then boost). In fact,
\begin{equation} \label{eq:kerr-para}
	M = \sgn \bfE \sqrt{\bfE^{2} - \abs{\bfP}^{2}}, \quad
	\Lmb = \Lmb_{\ell} \hbox{ with } \ell = \bfE^{-1} \bfP, \quad
	a = M^{-1} \abs{\bfJ(\Lmb_{\ell}^{-1} Q)}, \quad
	\xi = - M^{-1} \bfC(\Lmb_{\ell}^{-1} Q)
\end{equation}
where $Q' = \Lmb(Q)$ is defined by the relations 
\begin{equation*}
    \bbP_{\mu}(Q') = \tensor{\Lmb}{_{\mu}^{\mu'}} \bbP_{\mu'}(Q), \quad 
    \bbJ_{\mu \nu}(Q') = \tensor{\Lmb}{_{\mu}^{\mu'}} \tensor{\Lmb}{_{\nu}^{\nu'}} \bbJ_{\mu' \nu'}(Q),
\end{equation*} 
where $(\bbP_{0}, \bbP_{i}, \bbJ_{i 0}, \bbJ_{jk})(Q) = (\bfE, \bfP_{i}, \bfC_{i}, \tensor{\eps}{^{i}_{jk}} \bfJ_{i})(Q)$ (cf.~\eqref{eq:spt-averge-charges}, Lemma~\ref{lem:bump-poincare} and \cite[Prop.~E.1]{ChrDel}). Moreover, $R$ is a rotation in the plane spanned by $\sum_{j} \bfJ(\Lmb_{\ell}^{-1} Q)_{j} \rd_{x^{j}}$ and $\rd_{x^{3}}$ that maps the former to the latter. Note also that the Lorentz factor $\gmm$ of $\Lmb$ equals $(\bfE^{2} - \abs{\bfP}^{2})^{-\frac{1}{2}} \abs{\bfE}$.

In what follows, we shall freely make use of the following basic facts:
\begin{equation} \label{eq:kerr-some-facts}
\gmm^{-1} \aleq \abs{\Lmb}_{\bbR^{4} \to \bbR^{4}} \aleq \gmm, \quad
\abs{(a, \xi)} \aeq \gmm^{-2} M^{-1} (\bfC, \bfJ), \quad
\abs{\abs{x}^{-\alp} - \abs{x - \xi}^{-\alp}} \aleq_{\alp} \abs{x}^{-\alp-1} \hbox{ for } \abs{x} \geq 10 \abs{\xi}.
\end{equation}
Moreover, to simplify the exposition, we now adopt the following conventions: (1) $M, a \geq 0$ (the general case is similar), (2)~all bounds we state in the variables $y$ are to be satisfied for $\abs{y} \geq C_{R}(M + M^{-1} \abs{(\bfC, \bfJ)})$ for some $C_{R} = C_{R}(\gmm)$, which may vary line by line, and (3)~unless otherwise specified, all implicit constants may depend on $n$ and $\gmm$.

We begin with
\begin{align*}
    \partial_M \bfg_{M,a}=\frac{2r^3}{r^4+a^2(x^3)^2} \bfk_\mu \bfk_\nu,\quad \partial_a \bfg_{M,a}=\rd_{a} \left(\frac{2M r^{3}}{r^{4} + a^{2} (x^{3})^{2}} \right) \bfk_\mu \bfk_\nu+\frac{2Mr^3}{r^4+a^2(x^3)^2}(\partial_a \bfk_\mu \bfk_\nu+\bfk_\mu\partial_a \bfk_\nu).
\end{align*}
Notice that $|\partial^{(n)} \bfk_\mu|\lesssim \abs{x}^{-n}$, so we have
$|x|^n|\partial^{(n)}\partial_M \bfg_{M,a}|\lesssim \abs{x}^{-1}$.
Moreover, notice that
\begin{align*}
    \rd_{a} r = - \frac{r ((x^{1})^{2} + (x^{2})^{2})}{(r^{2}+a^{2})^{2} - a ((x^{1})^{2} + (x^{2})^{2})} = \calO(\abs{x}^{-1}),
\end{align*}
from which it follows that
\begin{align*}
    |x|^n\left|\partial^{(n)} \rd_{a} \left(\frac{2M r^{3}}{r^{4} + a^{2} (x^{3})^{2}} \right)\right|\lesssim \frac{M a}{\abs{x}^3} + \frac{M}{\abs{x}^{2}}, \quad
 \abs{x}^{n} \abs*{ \rd^{(n)} \partial_a \bfk_\mu} \aleq \mathcal{O}(\abs{x}^{-1}).
\end{align*}
We conclude that
\begin{align*}
    |x^n||\partial^{(n)}\partial_a(\bfg_{M,a})|\lesssim \frac{M a}{\abs{x}^3}+\frac{M}{\abs{x}^2}.
\end{align*}
Recall $y^{\mu} = \tensor{\Lmb}{^{\mu}_{\nu}}(\tensor{R}{^{\nu}_{\lmb}} x^{\lmb} + \bfxi^{\nu})$, as well as \eqref{eq:induced-data}. Using the preceding identities, \eqref{eq:kerr-spt} and \eqref{eq:kerr-some-facts}, we have
\begin{align*}
    |y|^n\left|\partial_y^{(n)} g_{M,a}^{(R, \xi, \Lmb)}\right|\lesssim M|y|^{-1},\quad |y|^n\left|\partial_y^{(n)}\partial_Mg_{M,a}^{(R, \xi, \Lmb)}\right|\lesssim |y|^{-1}, \quad
       |y|^n\left|\partial_y^{(n)}\partial_a g_{M,a}^{(R, \xi, \Lmb)}\right|\lesssim M|y|^{-2}.
\end{align*}
We may also compute\footnote{The exact choices of the coordinates $\Lmb$ and $R$ on $SO^{+}(1, 3)$ and $SO(3)$, respectively, are not too important. For instance, they may be regarded as local coordinates near given $\Lmb(\bfE, \bfP)$ and $R(Q)$ obtained by left-translation of normal coordinates near the identity in $SO^{+}(1, 3)$ and $SO(3)$, respectively.}
\begin{align*}
    |y|^n\left|\partial_y^{(n)}\partial_\xi g_{M,a}^{(R, \xi, \Lmb)}\right|\lesssim M|y|^{-2},\quad |y|^n\left|\partial_y^{(n)}\partial_\Lambda g_{M,a}^{(R, \xi, \Lmb)}\right|\lesssim M|y|^{-1}, \quad
    |y|^n\left|\partial_y^{(n)}\partial_{R} g_{M,a}^{(R, \xi, \Lmb)}\right|\lesssim Ma|y|^{-2},
\end{align*}
where we used the rotation invariance of the leading term of the Kerr metric for the last bound. Moreover, for $g_{M, A}^{(R, \xi, \Lmb), -}$, the leading term is cancelled and $\abs{y}^{-1}$ improve to $\abs{y}^{-2}$ in the above bounds. Finally, by \eqref{eq:induced-data}, similar bounds follow for $k_{M, a}^{(R, \xi, \Lmb)}$ and $k_{M, a}^{(R, \xi, \Lmb), +}$.

From the preceding assertions, \eqref{eq:Kerr-af}--\eqref{eq:Kerr-parity} immediately follow. To establish the remaining bounds, observe also that, by \eqref{eq:kerr-para} and the definition of $R$ (which is locally well-defined),
\begin{align*}
    \abs*{\frac{\partial \Lmb}{\partial(\bfE,\bfP)}} \lesssim_{\gamma} \frac{1}{M},\,
    \abs*{\frac{\partial R}{\partial (\bfC, \bfJ)}} \lesssim_{\gamma} \frac{1}{M a},\,
    \abs*{\frac{\partial R}{\partial(\bfE, \bfP)}} \lesssim_{\gamma} \frac{\abs{(\bfC, \bfJ)}}{M} \frac{1}{M a},\,
    \abs*{\frac{\partial (a,\xi)}{\partial(\bfC, \bfJ)}} \lesssim_{\gamma} \frac{1}{M},\,
    \abs*{\frac{\partial(a,\xi)}{\partial(\bfE,\bfP)}} \lesssim_{\gamma} \frac{\abs{(\bfC, \bfJ)}}{M^{2}}.
\end{align*}
For \eqref{eq:Kerr-Lip-EP}, we have
\begin{align*}
    &|y|^n|\partial^{(n)}\partial_{\bfE,\bfP}g_{M, a}^{(R, \xi, \Lmb)}|
    \lesssim \abs*{\tfrac{\partial M}{\partial(\bfE,\bfP)}} |y|^n|\partial^{(n)}\partial_{M} g_{M, a}^{(R, \xi, \Lmb)}|
    +\abs{\tfrac{\partial \Lmb}{\partial(\bfE,\bfP)}} |y|^n|\partial^{(n)}\partial_{\Lmb} g_{M, a}^{(R, \xi, \Lmb)}|\\
    &+\abs{\tfrac{\partial a}{\partial(\bfE,\bfP)}} |y|^n|\partial^{(n)}\partial_{a}g_{M, a}^{(R, \xi, \Lmb)}|
    +\abs{\tfrac{\partial R}{\partial(\bfE,\bfP)}} |y|^n|\partial^{(n)}\partial_{R} g_{M, a}^{(R, \xi, \Lmb)}|
    +\abs{\tfrac{\partial \xi}{\partial(\bfE,\bfP)}} |y|^n|\partial^{(n)}\partial_{\xi} g_{M, a}^{(R, \xi, \Lmb)}| \\
    &\lesssim |y|^{-1}+M^{-1}M|y|^{-1}
    +\tfrac{\abs{(\bfC, \bfJ)}}{M^{2}}M|y|^{-2}+\tfrac{\abs{(\bfC, \bfJ)}}{a M^{2}}aM|y|^{-2}
    \lesssim |y|^{-1}
\end{align*}
and we have a similar bound for $k_{M, a}^{(R, \xi, \Lmb)}$. The proof of \eqref{eq:Kerr-Lip-EP-parity} is similar. For \eqref{eq:Kerr-Lip-CJ}, we have
\begin{align*}
    &|y|^n|\partial^{(n)}\partial_{\bfC,\bfJ}g_Q^{Kerr}| \\
    &\lesssim \abs{\tfrac{\partial a}{\partial(\bfC,\bfJ)}}|y|^n|\partial^{(n)}\partial_{a}g_{M, a}^{(R, \xi, \Lmb)}|
    +\abs{\tfrac{\partial R}{\partial(\bfC,\bfJ)}} |y|^n|\partial^{(n)}\partial_{R} g_{M, a}^{(R, \xi, \Lmb)}|
    +\abs{\tfrac{\partial \xi}{\partial(\bfC,\bfJ)}} |y|^n|\partial^{(n)}\partial_{\xi} g_{M, a}^{(R, \xi, \Lmb)}|\\
    &\lesssim M^{-1}M|y|^{-2}+(Ma)^{-1}Ma|y|^{-2}+M^{-1}M|y|^{-2}\lesssim|y|^{-2}
\end{align*}
and we have a similar bound for $k_{M, a}^{(R, \xi, \Lmb)}$. \hfill \qedsymbol

\bibliographystyle{siam}
\bibliography{gr-gluing}

\end{document}